\newtheorem{theorem}{Theorem}[section]
\newtheorem{definition}{Definition}[section]
\newtheorem{corollary}[definition]{Corollary}
\newtheorem{lemma}[definition]{Lemma}
\newtheorem{remark}[definition]{Remark}
\newcommand{\psca}[1]{\left\langle#1\right\rangle}
\newcommand{\abs}[1]{\left\vert#1\right\vert}
\newcommand{\set}[1]{\left\{#1\right\}}
\newcommand{\norm}[1]{\left\Vert#1\right\Vert}
\newcommand{\pare}[1]{\left(#1\right)}
\newcommand{\eps}{\varepsilon}
\newcommand{\LEO}{black}
\newcommand{\SAID}{black}
\numberwithin{equation}{section} 
\begin{document}

\title{Hydrostatic limit of the Navier-Stokes-alpha model}
\author{{Glangetas L\'eo, Ngo Van-Sang, Said El Mehdi}}
\address{Universit\'e de Rouen Normandie, CNRS UMR 6085, Math\'ematiques, 76801 Saint-Etienne du Rouvray, France}

% It is required to enter 2010 MSC.
\subjclass{35Q30, 76D03}

% Please provide minimum  5 keywords.
\keywords{Navier-Stokes-$\alpha$ model, hydrostatic approximation, analyticity}

% Email address of each of all authors is required.
% You may list email addresses of all other authors, separately.
 \email{leo.glangetas@univ-rouen.fr,  van-sang.ngo@univ-rouen.fr, el-mehdi.said@etu.univ-rouen.fr}

\date{\today} %\centerline{\today}
\maketitle

% Enter the first author's name and address:
%{\footnotesize
%% please put the address of the first author
% \centerline{$^1$Universit\'e de Rouen, CNRS UMR 6085, Math\'ematiques}
%   %\centerline{Other lines}
%   \centerline{ 76801 Saint-Etienne du Rouvray, France}
%} % Do not forget to end the {\footnotesize by the sign }

%\institute{BBB}

\begin{abstract}
In this paper we study the hydrostatic limit of the Navier-Stokes-alpha model in a very thin striped domain. We derive some Prandtl-type limit equations for this model and we prove the global well-posedness of the limit system for small initial conditions in an appropriate analytic function space. 
\end{abstract}

%\tableofcontents
%=======================================================================

\section{Introduction}

\subsection{Motivation} The characteristic feature of a turbulent fluid 
according to Kolmogorov's theory is that the energy cascades from large scales to small scales 
until it reaches the dissipation scale and then turns into heat. 
This feature leads to important costs of calculations in numerical simulation 
because the grid resolutions cannot keep up to the dissipation scale, which is extremely small 
when the Reynolds number is large (which corresponds to turbulent flows). 
So the idea is to consider the effects of smaller scales on larger scales 
instead of capturing all scales and as a consequence, one can achieve a balance between computational costs and precision.

One way to model turbulent flows is the so-called Large Eddy Simulation method. It consists in filtering the small scales and directly calculating the large scales of the turbulent cascade (see \cite {Gh99} for instance). 
Another approach is the Reynolds Averaged Navier-Stokes, based on Reynolds decomposition, which provides mean quantities of turbulent flow while fluctuations will be modeled. This second method is used in the industry due to its small computational costs. 
However, both approaches meet a common problem which is the closure of the model systems where there are more unknowns than equations.

In order to overcome this difficulty, the Navier-Stokes-alpha model was introduced, where an energy ``penalty'' inhibits the creation of small excitations below a certain length scale $\alpha$ (also called the viscous Camassa-Holm equations, see \cite{CH93, CFHOTW98, HMR98, K99, MS2001, M98} and the references therein for a survey of Camassa-Holm equations). 
This ``alpha-modification'' leads to a change in the convection term of the Navier-Stokes equations. More precisely, this is the following system
\begin{equation} \label{ANS-alpha}
	\left\{
	\begin{aligned}
		&\frac{\partial \textbf{v}}{\partial t} + (\textbf{u}\cdot\nabla)\textbf{v}+v_1\nabla u_1 + v_2\nabla u_2 =\nu \Delta \textbf{v}-\nabla q 
		\\
		&\nabla \cdot \textbf{u}=0,
	\end{aligned}
	\right.
\end{equation}
where
\begin{align*}
	\textbf{v}=(v_1,v_2)&=(1-\alpha^2 \Delta)\textbf{u} = ((1-\alpha^2 \Delta)u_1,(1-\alpha^2 \Delta)u_2)
\end{align*}
and  $\textbf{u}$ is the velocity of the fluid and $q$ the modified pressure. 
In \cite{FHT2002}, Foias, Holm and Titi prove the global existence and uniqueness 
of the solution of $(\ref{ANS-alpha})$ in a periodic domain 
for $H^1$-initial data and the convergence of the solution of 3D Camassa-Holm equation (NS-$\alpha$) towards a weak solution of 3D NS equations 
when $\alpha$ tends to zero and in \cite{Bu2002}, Busuioc gives a simple proof of the global existence and uniqueness of $(\ref{ANS-alpha})$ for $H^1_{0}$-initial data in bounded domains with Dirichlet boundary conditions. 

%-------------------------------------------------------------------------------------

\medskip

In this paper, we consider the system \eqref{ANS-alpha} in the thin strip $\mathcal{S}^{\varepsilon}=\left\{(x,y)\in \mathbb{R}^2,0<y<\varepsilon \right\}$, where the width $\eps$ is supposed to be very small. This consideration is relevant for planetary-scale oceanic and atmospheric flows (see \cite{Pe1987}), for which, the vertical scale (a few kilometers for oceans, 10-20 kilometers for the atmosphere) is much smaller than the horizontal scales (thousands of kilometers). In this framework, the fluid behaviors are approximated by the so-called hydrostatic model, in which the conservation momentum in the vertical direction is replaced by a simple hydrostatic equation. In the case of Navier-Stokes equations for a viscous fluid in a thin strip, the hydrostatic limit leads to the following rescaled system in domain $\mathbb{R}\times]0,1[$ 
\begin{equation}\label{HNS}
	\left\{
	\begin{aligned}
		&\partial_{t} u+u\,\partial_{x} u+v\partial_z u = \partial^2_y u-\partial_{x}p
		\\
		&\partial_{y}p=0
		\\
		&\partial_{x} u+\partial_y v =0
		\\ 
		&(u,v)\vert_{y=0}= (u,v)\vert_{y=1}=0\\
		& u|_{t=0}=u_0. 
	\end{aligned}
	\right.
\end{equation}
This model and its three-dimensional counterpart are very important in oceanography and meteorology (see \cite{B94, LTW92, Pe1987}). 
It is known that without any structural assumption on the initial data, real-analyticity is both necessary \cite{R2009} 
and sufficient \cite{KTVZ2011} for the local well-posedness of the system \eqref{HNS}. 
In \cite{GMV2019} the authors proved that for convex initial data, the local well-posedness holds under simple Gevrey regularity. 
In \cite{PZZ2020}, Paicu \emph{et al.}  proved the existence and uniqueness of global analytic solution with small analytic initial data  with respect to variable $x$. We want to emphasize the similarity of the system \eqref{HNS} and the classical Prandtl system. For the classical Prandtl, the well-posedness also requires analyticity frameworks (see \cite{samm}) or monotonicity hypotheses to be able to work in classical Sobolev frameworks (see for instance \cite{AWXY2015, MW2015}).

In this paper, we will study the hydrostatic limit of the Navier-Stokes-alpha equations as the strip width $\eps \to 0$. 
Using the techniques of \cite{PZZ2020}, we prove the global well-posedness of the limit system in appropriate analytic function spaces. 
We will justify this limit by proving the convergence of the Navier-Stokes-alpha system in a forthcoming paper. 
In the next subsection, we will give brief derivation of the hydrostatic limit of the system \eqref{ANS-alpha} as $\eps \to 0$.

\subsection{Hydrostatic limit of the Navier-Stokes-alpha model} 
%---------------------------------------------------------------------
We consider the system \eqref{ANS-alpha} in a thin strip
\begin{align*}
	\mathcal{S}^{\varepsilon}=\left\{(x,y)\in \mathbb{R}^2,0<y<\varepsilon \right\},
\end{align*}
where the width $\varepsilon > 0$ is supposed to be very small. Equipped with no-slip boundary conditions, we rewrite \eqref{ANS-alpha} as
\begin{equation}\label{alpha_NS_strp}
	\left\{
	\begin{aligned}
		&\partial_t v^\eps_1+ u^\eps_1\,\partial_x v^\eps_1+u^\eps_2 \,\partial_y v^\eps_1 + v^\eps_1\,\partial_xu^\eps_1+v^\eps_2\,\partial_xu^\eps_2 = \nu \Delta v^\eps_1 -\partial_x q^\eps
		\\
		&\partial_t v^\eps_2+ u^\eps_1\,\partial_x v^\eps_2 + u^\eps_2 \,\partial_y v^\eps_2 + v^\eps_1\,\partial_y  u^\eps_1 + v^\eps_2\,\partial_yu^\eps_2 = \nu \Delta v^\eps_2 -\partial_y q^\eps
		\\
		&\partial_x u^\eps_1+\partial_y u^\eps_2 =0
		\\ 
		&u^\eps_1 = u^\eps_2 = 0\quad\textsl{on}\quad \set{y=0} \cup \set{y=\varepsilon}
		\\
		&\partial_z u^\eps_1 = \partial_z u^\eps_2 = 0\quad\textsl{on}\quad \set{y=0} \cup \set{y=\varepsilon}\\
		&(u^\eps_1,u^\eps_2)_{|t=0}=(u_{1,0},u_{2,0})(x,y),
	\end{aligned}
	\right.
\end{equation}
where, for $i = 1,2$, 
\begin{align*}
	v^\eps_i = u^\eps_i-\alpha^2 \Delta u^\eps_i.
\end{align*} 	
%----------------------------------------------------------------
%
We consider the following rescaling
%------------------------------------------------------------------
\begin{align*}
u^\eps_1(t,x,y)&=u^\eps\left(t,x,\frac{y}{\varepsilon}\right),
\\
u^\eps_2(t,x,y)&=\varepsilon\,v^\eps\left(t,x,\frac{y}{\varepsilon}\right),
\\
q^\eps(t,x,y)&=\tilde{q}^\eps\left(t,x,\frac{y}{\varepsilon}\right),
\end{align*}
we perform the change of variable $z=\frac{y}{\varepsilon}$ and we set
\begin{align*}
	\mathcal{S}=\left\{(x,z)\in \mathbb{R}^2,0<z<1 \right\}.
\end{align*} 
We remark that for any function $\varphi(t,x,y) = \psi\pare{t,x,\frac{y}{\eps}}$, we have
\begin{align*}
	\partial_y^{(k)} \pare{\varphi(t,x,y)} = \eps^{-k} \partial_z^{(k)} \psi\pare{t,x,z}.  
\end{align*}
%-------------------------------------------------------------------
Then the following calculations are immediate
\begin{equation*}
	\left\{
	\begin{aligned}
		v^\eps_1(t,x,y) &= u^\eps(t,x,z)-\alpha_1^2\partial_z^2 u^\eps(t,x,z)- \varepsilon^2 \alpha_1^2 \partial_x^2u^\eps(t,x,z)\\
		v^\eps_2(t,x,y) &= \varepsilon ( v^\eps(t,x,z) -\alpha_1^2  \partial_z^2 v^\eps(t,x,z))- \varepsilon^3\, \alpha_1^2 \partial_x^2v^\eps(t,x,z), 
	\end{aligned}
	\right.
\end{equation*}
where $\alpha_1=\frac{\alpha}{\varepsilon}$. 
The system \eqref{alpha_NS_strp} becomes 
\begin{equation*}
\left\{
	\begin{aligned}
		&\partial_{t}\omega^\eps 
		- \alpha_1^2 \varepsilon^2 \partial_t\partial_x^2u^\eps 
		+ u^\eps\,\partial_{x}\omega^\eps 
		- \alpha_1^2 \varepsilon^2 u^\eps\,\partial_x^3 u^\eps + v^\eps\partial_z \omega^\eps - \alpha_1^2 \varepsilon^2 v^\eps\,\partial_z \partial_x^2u^\eps + \omega^\eps \partial_x u^\eps - \alpha_1^2 \varepsilon^2\,\partial_x u^\eps \,\partial_x^2u^\eps 
		\\
		&\hspace{2cm} + \varepsilon^2 \gamma^\eps \partial_x v^\eps - \varepsilon^4\,\alpha_1^2 \partial_x^2 v^\eps\partial_x v^\eps = \partial^2_z \omega^\eps + \varepsilon^2 \partial^2_{x} \omega^\eps - \varepsilon^2 \,\alpha_1^2 \partial^2_z \partial_x^2 u^\eps - \alpha_1^2 \varepsilon^4 \partial_x^4 u^\eps - \partial_x \tilde{q}^\eps
		\\
		&\varepsilon^2 \partial_{t} \gamma^\eps - \alpha_1^2 \varepsilon^4 \partial_t\partial_x^2v^\eps + \varepsilon^2\, u^\eps\partial_{x} \gamma^\eps - \alpha_1^2 \varepsilon^4 \,u^\eps\,\partial_x^3v^\eps + \varepsilon^2 v^\eps\,\partial_z \gamma^\eps - \alpha_1^2 \varepsilon^4 v^\eps\,\partial_z \partial_x^2v^\eps + \omega\,\partial_z u^\eps-\alpha_1^2 \varepsilon^3 \partial_x^2u^\eps\,\partial_z u^\eps 
		\\
		&\hspace{2cm} + \varepsilon^2 \gamma^\eps\,\partial_z v^\eps - \alpha_1^2 \varepsilon^4 \partial_x^2v^\eps \partial_z v^\eps = \varepsilon^2 \partial^2_z \gamma^\eps + \varepsilon^4\,\partial^2_{x} \gamma^\eps - \alpha_1^2 \varepsilon^4\,\partial^2_z \partial_x^2 v^\eps - \alpha_1^2 \varepsilon^6\, \partial_x^4 v^\eps - \partial_z \tilde{ q}^\eps
		\\
		&\partial_{x} u^\eps + \partial_z v^\eps = 0, 
	\end{aligned}
\right.
\end{equation*}
where we denote $\omega^\eps = u^\eps - \alpha_1^2 \partial_z^2 u^\eps$ and  $\gamma^\eps = v^\eps - \alpha_1^2 \partial_z^2 v^\eps$ 
to lighten the notations.

Formally taking $\varepsilon \to 0$ we obtain 
\begin{equation}\label{aNSG_Strip_bis}
	\left\{
	\begin{aligned}
		&\partial_{t} (u-\alpha_1^2\partial_z^2 u)+u\,\partial_{x} (u-\alpha_1^2\partial_z^2 u) 
		+ v\partial_z (u-\alpha_1^2\partial_z^2 u)+(u-\alpha_1^2\partial_z^2 u) \partial_x u 
		= \partial^2_z (u-\alpha_1^2\partial_z^2 u)-\partial_{x}\tilde{q}
		\\
		&(u-\alpha_1^2\partial_z^2 u)\partial_z u=-\partial_{z} \tilde{q}
		\\
		&\partial_{x} u+\partial_y v =0
		\\ &(u,v)=0, \quad \partial_z u=0 \quad\textsl{on}\quad z=0
		\\
		&(u,v)=0, \quad \partial_zu=0 \quad \textsl{on} \quad z=1\\
		&u_{|t=0}=u_0.
	\end{aligned}
	\right.
\end{equation}
%
%-----------------------------------------------------------------------------------
We consider the modified pressure in the strip
\begin{equation*}
	p=\tilde{q}+\frac{1}{2} u^2- \frac{1}{2}\alpha_1^2 (\partial_z u)^2 
\end{equation*}
and we set
\begin{equation*}
	\omega=u-\alpha_1^2\partial_z^2 u. 
\end{equation*}
Then we can rewrite the system \eqref{aNSG_Strip_bis} as 
\begin{equation}\label{ANSG_Strip}
	\left\{
	\begin{aligned}
		&\partial_{t}\omega+u\,\partial_{x}\omega+v\partial_z\omega 
		+ \alpha_1^2(\partial_z u\,\partial_x \partial_z u-\partial^2_z u\partial_x u) 
		= \partial^2_z \omega-\partial_{x}p
		\\
		&\partial_{z}p=0
		\\
		&\partial_{x} u+\partial_y v =0 
		\\ &(u,v)=0, \quad \partial_z u=0 \quad\textsl{on}\quad z=0
		\\
		&(u,v)=0, \quad \partial_zu=0 \quad \textsl{on} \quad z=1\\
		& u|_{t=0}=u_0. 
	\end{aligned}
	\right.
\end{equation}
\begin{remark}\label{rem incompressible}
	\begin{enumerate}
		\item The no-slip boundary conditions $(u,v)|_{y=0}=(u,v)|_{y=1}=0$ and the incompres\-sibility $\partial_{x} u+\partial_y v =0$ imply 
		\begin{equation*}
			v(t,x,y)=\int_0^y \partial_{\tilde{y}} v(t,x,\tilde{y})\,d\tilde{y}=-\int_0^y \partial_x u(t,x,\tilde{y})\,d\tilde{y}.
		\end{equation*}
	
		\item From the incompressibility condition we deduce that
		\begin{align*}
			\partial_x \int_0^1 u(t,x,y)\,dy =-\int_0^1 \partial_y v(t,x,y)\,dy = v(t,x,1)-v(t,x,0) = 0,
		\end{align*}
		which together with the fact that $u(t,x,y) \to 0 \, \text{ as } \, |x| \to +\infty$, ensures that
		\begin{equation} \label{verticalaverage}
			\int_0^1 u(t,x,y)\,dy=0.
		\end{equation} 
		Since $\partial_z p=0$, integrating the first equation of system \eqref{ANSG_Strip} and using the boundary conditions, we get 
		\begin{equation*}
			\partial_x p(t,x)= \alpha_1^2 \partial^3_z u|_{z=0}-\alpha_1^2 \partial^3_z u|_{z=1}-\partial_x \int_0^1 u^2(t,x,y)\,dz
			-\alpha_1^2 \partial_x \int_0^1 (\partial_zu)^2(t,x,y)\,dz. 
		\end{equation*}
	\end{enumerate}
\end{remark}

\bigskip 

We emphasize that, similar to Prandtl equation, the nonlinear term $v \partial_z \omega$ in the system \eqref{ANSG_Strip} 
creates the loss of one derivative in $x$ variable in energy-type estimates. 
So, in order to overcome this difficulty, it is natural to work in analytic function frameworks. 
In the next paragraph, we will introduce some elements of Littlewood-Paley theory 
and functional spaces that we are going to use in this paper.

\subsection{Littlewood-Paley theory and functional spaces}

We consider a even smooth function $\chi$ in $C^{\infty}_0(\mathbb{R})$ such that the support is contained in the ball $B_{\mathbb{R}}(0,\frac{4}3)$ and $\psi$ is equal to 1 on a neighborhood of the ball $B_{\mathbb{R}}(0,\frac{3}4)$. 
We set $\psi(z) = \chi\pare{\frac{z}2} - \chi(z)$ then the support of $\psi$ is contained in the ring $\set{z \in \mathbb{R}: \frac{3}4 \leq \abs{z} \leq \frac{8}3}$, and $\psi$ is identically equal to 1 on the ring $\set{z \in \mathbb{R}: \frac{4}3 \leq \abs{z} \leq \frac{3}2}$. 
Moreover, the functions $\chi$ and $\psi$ verify the following important properties
\begin{align*}
	&\sum_q \psi (2^{-q}z)=1,  \quad  \forall z \in \mathbb{R}^* , 
	\\
	& \chi(z)+\sum_{q \geq 0} \psi (2^{-q}z)=1,  \quad  \forall \eta \in \mathbb{R}
\end{align*} 
and
\begin{equation*}
	\forall\, j,j' \in \mathbb{N}, \; \abs{j-j'}\geq 2, \quad \text{supp}\, \psi(2^{-j} \cdot) \cap \text{supp}\, \psi(2^{-j'} \cdot) = \emptyset.
\end{equation*} 
Let $\mathcal{F}_h$ and $\mathcal{F}^{-1}_h$ be the Fourier transform and the inverse Fourier transform respectively in the horizontal direction. We will also use the notation $\widehat{f} =\mathcal{F}_h f$. We introduce the following definitions
of the homogeneous dyadic cut-off operators.

\begin{definition}\label{def Dh}
 	For all tempered distributions in the horizontal direction and for all $q \in \mathbb{Z}$ we set
 	\begin{align*}
  		\Delta^h_q u(x,y)&= \mathcal{F}^{-1}_h (\psi(2^{-q} |\xi|)\mathcal{F}_h u(\xi,y)),
  		\\
  		S^h_q u(x,y)& = \mathcal{F}^{-1}_h (\chi(2^{-q} |\xi|)\mathcal{F}_h u(\xi,y)) = \sum_{q'\leq q-1} \Delta^h_{q'} u . 
 	\end{align*}
\end{definition} 
In our paper, we will use the same functional spaces as in \cite{PZZ2020}, the definition of which is given in what follows. 
\begin{definition}
Let $s\in \mathbb{R}$ and $\mathcal{S}=\mathbb{R}\times ]0,1[$. For any $u \in S'_h(\mathcal{S})$, i.e, 
$$u \in S'(\mathcal{S}) \quad\mbox{with}\quad \lim_{q \to -\infty} \|S^h_q u \|_{L^{\infty}}=0,$$ 
we set
\begin{equation*}
		\|u\|_{B^s}=\sum_{q \in \mathbb{Z}} 2^{qs} \|\Delta^h_q u \|_{L^2}.		
\end{equation*}
\begin{itemize}
\item[•] For $s \leq \frac{1}{2}$, we define
	\begin{equation*}
		B^s( \mathcal{S})= \left \{ u \in S'_h(\mathcal{S}), \quad \|u\|_{B^s} <\infty \right \}.
	\end{equation*}
\item[•] For $s \in ]k-\frac{1}{2}, k+\frac{1}{2} ]$, with $k \in \mathbb{N}^*$, 
		we define $B^s(\mathcal{S})$ as the subset of distributions $u \in S'_h(\mathcal{S})$ 
		such that $\partial_x^k u \in B^{s-k} (\mathcal{S})$.
\end{itemize}
\end{definition}
\begin{remark}\label{rem dq}
For $u\in B^s$, there exists a summable sequence of positive numbers ${d_q(u)}$ with $\sum_{q} d_q(u)=1$, such that 
	\[ \|\Delta^h_q u \|_{L^2} \lesssim d_q(u) 2^{-qs} \|u\|_{B^s} . \]
\end{remark}

We also need the following time-weighted Chemin-Lerner-type spaces (see \cite{CL1992}). 
\begin{definition}
	Let $p \in [1,+\infty]$ and $T \in ]0, \infty]$
	and let $f\in L^1_{loc}(\mathbb{R}^+)$ be non-negative function.
	We define space $\widetilde{L}^p_{T,f}(B^s(\mathcal{S}))$ as the closure of $C([0,T],B^s(\mathcal{S}))$ with respect to the norm
	\begin{equation*}
		\|u\|_{\widetilde{L}^p_{T,f}(B^s(\mathcal{S})}=\sum_{q \in \mathbb{Z}} 2^{qs} \left(\int_0^T f(t)  \|\Delta^h_q u \|^p_{L^2} dt \right)^{\frac{1}{p}}
	\end{equation*}
	with the usual change if  $p=+\infty$.
	In the case of $f(t)\equiv 1$, we will simply use the notations $\widetilde{L}^p_{T}(B^s(\mathcal{S}))$ and $\|u\|_{\widetilde{L}^p_{T}(B^s(\mathcal{S}))}$. 
\end{definition}
\begin{remark}\label{rem dqf}
For $u\in {\widetilde{L}^p_T(B^s)}$, there exists a summable sequence of positive numbers ${d_q(u,f)}$ with $\sum_{q} d_q(u,f)=1$, such that
\[ \|\Delta^h_q u \|_{L^p_T(L^2)} \lesssim d_q(u,f) 2^{-pqs} \|u\|_{\widetilde{L}^p_T(B^s)}. \]
\end{remark}

\subsection{Main results}

The main difficulty consists in controlling the nonlinear terms, using the smoothing effect given by the above function spaces. 
The idea is to define the following auxiliary functions, using the method introduced 
by Chemin in \cite{C2004} (see also \cite{CGP2011} or \cite{PZZ2020}): 
for any $f \in L^2(\mathcal{S})$, we set 
\begin{equation}\label{auxiliary_functions}
	f_{\phi}(t,x,y)=e^{\phi(t,D_x)}\,f(t,x,y)
	:=\mathcal{F}^{-1}_{\xi \to x}(e^{\phi(t,\xi)}\,\widehat{f}(t,\xi,y)) 
	\quad\text{with}\quad \phi(t,\xi)=(a-\lambda\theta(t))|\xi|,
\end{equation}
where the quantity $\theta(t)$ describes the evolution of the analytic band of u, satisfies
\begin{equation}\label{Analytic_Bande}
	\theta'(t)=\|\partial_z^2 u_{\phi}(t)\|_{B^{\frac{1}{2}}}  \quad \text{and} \quad \theta(0)=0.
\end{equation}
We remark that if we differentiate a function of the type $e^{\phi(t,D_x)}\,f(t,x,y)$  with respect to the time variable, we have
\begin{equation*}
	\partial_t(e^{\phi(t,D_x)}\,f(t,x,y))=-\lambda \theta'(t)|D_x|e^{\phi(t,D_x)}\,f(t,x,y)+e^{\phi(t,D_x)}\partial_t f(t,x,y), 
\end{equation*}
where $-\lambda \theta'(t)|D_x|e^{\phi(t,D_x)}\,f(t,x,y)$ plays the role of a ``smoothing term'', 
provided that $\theta'(t) >0$, and allows to ``absorb'' the loss of one derivative in the nonlinear terms when we perform energy-type estimates.

Our main result is the folowing.
%----------------------------
\begin{theorem} \label{mainth}
Let $a>0$ be fixed. 
We assume that  $e^{a|D_x|}u_0 \in {B^{\frac{3}{2}}}$ and $e^{a|D_x|} \partial_z u_0 \in B^{\frac{3}{2}}$. 
There exist positive constants $c$ and $C$ such that, if we suppose that 
\begin{equation} \label{DataB12}
	\|e^{a|D_x|}u_0 \|_{B^\frac{1}{2}}+\,  \|e^{a|D_x|} \partial_z u_0\|_{B^\frac{1}{2}} 
	\leq 	\frac{c\, a}{1+ \|e^{a|D_x|}u_0 \|_{B^{\frac{3}{2}}}+  \|e^{a|D_x|} \partial_z u_0\|_{B^{\frac{3}{2}}}}
\end{equation}
and the compatibility condition $\int_0^1 u_0 dz=0$, then a unique global solution for the system \eqref{ANSG_Strip} exists and satisfies
\begin{multline} \label{Estimate B3/2}
	\|e^{Rt} u_\phi \|_{\widetilde{L}^{\infty}(\mathbb{R}^+, B^{\frac{3}{2}})} 
	+ \|e^{Rt}  \partial_z u_\phi\|_{\widetilde{L}^{\infty}(\mathbb{R}^+, B^{\frac{3}{2}})} 
	+ \|e^{Rt}  \partial_z u_\phi\|_{\widetilde{L}^2(\mathbb{R}^+, B^{\frac{3}{2}})} 
	+ \|e^{Rt}  \partial^2_z u_\phi\|_{\widetilde{L}^2(\mathbb{R}^+, B^{\frac{3}{2}})} 
	\\
	\leq C(\|e^{a|D_x|}u_0 \|_{B^{\frac{3}{2}}}+\,  \|e^{a|D_x|} \partial_z u_0\|_{B^{\frac{3}{2}}}).
\end{multline}
Furthermore, if 
$e^{a|D_x|}u_0 \in  B^{\frac{5}{2}} $, 
$e^{a|D_x|} \partial_z u_0 \in B^{\frac{5}{2}}$, 
$e^{a |D_x|} \partial^2_z u_0 \in B^{\frac{3}{2}}$, 
then
\begin{multline}\label{time_estimate_Bs}
	\|e^{Rt} (\partial_t u)_\phi \|^2_{\widetilde{L}^2(\mathbb{R}^+, B^{\frac{3}{2}})} + \|e^{Rt} (\partial_t \partial_z u)_\phi \|^2_{\widetilde{L}^2(\mathbb{R}^+, B^{\frac{3}{2}})} + \|e^{Rt} \partial_z u_\phi \|^2_{\widetilde{L}^{\infty}(\mathbb{R}^+, B^{\frac{3}{2}})} + \|e^{Rt} \partial^2_z u_\phi \|^2_{\widetilde{L}^{\infty}(\mathbb{R}^+, B^{\frac{3}{2}})}
	\\
	\leq C \Big( \| e^{a |D_x|} \partial_z u_0 \|_{B^{\frac{3}{2}}} + \| e^{a |D_x|} \partial^2_z u_0 \|_{B^{\frac{3}{2}}}+\|e^{a|D_x|}u_0\|_{B^\frac{5}{2}} + \|e^{a|D_x|} \partial_z u_0\|_{B^\frac{5}{2}}\Big). 
\end{multline}
\end{theorem}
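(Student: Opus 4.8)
The plan is to run a Chemin-type analytic energy estimate on the vorticity-like quantity $\omega = u - \alpha_1^2\partial_z^2 u$ and its vertical derivatives, working in the weighted Chemin-Lerner spaces $\widetilde{L}^p_{T,f}(B^{3/2})$ with the analytic weight $e^{\phi(t,D_x)}$ and the exponential time factor $e^{Rt}$. The whole argument follows the scheme of Paicu–Zhang–Zhang \cite{PZZ2020}: the key structural fact exploited is that the worst nonlinear term $v\partial_z\omega$ loses exactly one horizontal derivative, and this loss is compensated by the "gain" term $-\lambda\theta'(t)|D_x|$ coming from $\partial_t(e^{\phi(t,D_x)}f)$, provided we can keep $\theta'(t)>0$ and the analytic band $a-\lambda\theta(t)$ bounded away from degeneration.

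Let me sketch the steps in order. First I would set up the existence part by a standard Friedrichs-type approximation or iteration scheme, so that the real content lies in deriving the a priori estimates on the approximate solutions; the uniqueness follows from the same kind of estimate applied to the difference of two solutions. Second, I would apply the dyadic operator $\Delta^h_q$ and the analytic weight to the equation for $\omega$ in \eqref{ANSG_Strip}, take the $L^2(\mathcal{S})$ inner product against $e^{2Rt}(\Delta^h_q\omega_\phi)$, and carefully track (i) the viscous term $\partial_z^2\omega$, which after integration by parts and use of the Poincaré inequality (legitimate because of the no-slip boundary conditions $(u,v)=0$ and $\partial_z u = 0$ on $\{z=0,1\}$) produces a positive dissipation controlling the $\widetilde{L}^2(B^{3/2})$ norms of $\partial_z u_\phi$ and $\partial_z^2 u_\phi$; (ii) the factor $-R$ coming from differentiating $e^{2Rt}$, which must be dominated by that dissipation to close the $e^{Rt}$-weighted estimate; and (iii) the smoothing term $-\lambda\theta'(t)|D_x|$. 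Third, I would estimate each nonlinear contribution using Bony's paraproduct decomposition, the commutator structure of $\Delta^h_q$ with the convection terms, and the elementary product laws in the $B^s$ spaces, bounding the resulting sums via the summable sequences $d_q(u,f)$ from Remark \ref{rem dqf}; the delicate point is to arrange that every term carrying an extra $|\xi|$ factor is absorbed by $\lambda\theta'(t)\||D_x|^{1/2}\cdot\|$, which by the definition \eqref{Analytic_Bande} of $\theta'$ is precisely $\|\partial_z^2 u_\phi\|_{B^{1/2}}$ times the quantity being estimated.

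Once the $B^{3/2}$ estimate \eqref{Estimate B3/2} is closed under the smallness hypothesis \eqref{DataB12} — the smallness guaranteeing that $\lambda\theta(t)<a$ for all time, so the analytic radius never collapses — I would turn to the higher-regularity bound \eqref{time_estimate_Bs}. Here the new feature is controlling the time derivatives $(\partial_t u)_\phi$ and $(\partial_t\partial_z u)_\phi$ in $\widetilde{L}^2(B^{3/2})$; I would read these off directly from the evolution equation, expressing $\partial_t\omega$ in terms of the spatial terms, and then estimate the right-hand side in $B^{3/2}$ using the already-established $B^{3/2}$ and $B^{5/2}$ control on $u$, $\partial_z u$, and $\partial_z^2 u$. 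The pressure gradient $\partial_x p$ is handled via the explicit formula in Remark \ref{rem incompressible}, and the compatibility condition $\int_0^1 u_0\,dz=0$ ensures \eqref{verticalaverage} propagates in time so that $v$ is recovered from $u$ by the Biot–Savart-type integral without picking up spurious constants.

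\textbf{The main obstacle} I anticipate is the nonlinear term $v\partial_z\omega$ together with the extra commutator terms $\alpha_1^2(\partial_z u\,\partial_x\partial_z u - \partial_z^2 u\,\partial_x u)$ specific to the Navier-Stokes-$\alpha$ model, which have no analogue in the plain hydrostatic Navier-Stokes system of \cite{PZZ2020}. Because $v = -\int_0^z\partial_x u\,dz'$ already contains one horizontal derivative, the product $v\partial_z\omega$ effectively loses a full derivative in $x$, and the $\alpha$-terms couple $\partial_z u$ and $\partial_z^2 u$ in a way that must be shown to be controllable by the same dissipation-plus-smoothing budget. The crux of the estimate is therefore to verify that the constant appearing in front of the "bad" contributions can be made strictly smaller than the combined coefficient of the dissipation term and the smoothing term $\lambda\theta'(t)$, which is exactly where the smallness condition \eqref{DataB12} and the careful choice of the parameters $\lambda$ and $R$ enter.
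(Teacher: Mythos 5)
Your overall strategy (the analytic weight $e^{\phi(t,D_x)}$ with $\theta'(t)=\|\partial_z^2 u_\phi\|_{B^{1/2}}$, Bony decomposition for the nonlinearities, absorption of the one-derivative loss by $\lambda\theta'(t)|D_x|$, smallness of the data to keep the analytic band from collapsing, an approximation scheme plus a difference estimate for uniqueness) is indeed the scheme of the paper. However, there is a genuine flaw in your central step: you propose to take the $L^2(\mathcal{S})$ inner product of the $\Delta^h_q$-localized equation with $e^{2Rt}\Delta^h_q\omega_\phi$, where $\omega=u-\alpha_1^2\partial_z^2u$, and you justify the integration by parts of the viscous term by the no-slip conditions. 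But the boundary conditions of \eqref{ANSG_Strip} are $u=\partial_z u=0$ on $z\in\{0,1\}$; they give no information on $\omega$ or $\partial_z\omega$ there (on the boundary one has $\omega=-\alpha_1^2\partial_z^2u$ and $\partial_z\omega=-\alpha_1^2\partial_z^3u$, neither of which vanishes in general). Hence the pairing $\psca{\partial_z^2\omega_\phi,\Delta^h_q\omega_\phi}$ produces the uncontrolled boundary contribution $\alpha_1^4\,[\partial_z^3u_\phi\,\partial_z^2u_\phi]_{z=0}^{z=1}$, and the dissipation it would yield, $\|\partial_z\omega_\phi\|_{L^2}^2$, involves $\partial_z^3u_\phi$ rather than the quantities $\partial_z u_\phi,\partial_z^2u_\phi$ you claim to control. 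In addition, the energy generated by this pairing is $\|\omega_\phi\|_{L^2}^2\simeq\|u_\phi\|_{L^2}^2+2\alpha_1^2\|\partial_z u_\phi\|_{L^2}^2+\alpha_1^4\|\partial_z^2u_\phi\|_{L^2}^2$, so closing it would require $e^{a|D_x|}\partial_z^2u_0\in B^{3/2}$, a hypothesis that the first part of Theorem \ref{mainth} does not make.

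The correct choice — and this is what the paper does — is to test against $e^{2Rt}\Delta^h_q u_\phi$ instead. Since $u_\phi$ and $\partial_z u_\phi$ vanish on $z=0,1$, every boundary term disappears and one obtains exactly
\begin{equation*}
\psca{\partial_t\omega_\phi,u_\phi}=\frac12\frac{d}{dt}\left(\|u_\phi\|_{L^2}^2+\alpha_1^2\|\partial_z u_\phi\|_{L^2}^2\right),
\qquad
-\psca{\partial_z^2\omega_\phi,u_\phi}=\|\partial_z u_\phi\|_{L^2}^2+\alpha_1^2\|\partial_z^2u_\phi\|_{L^2}^2,
\end{equation*}
i.e. the energy and dissipation that match both the norms in \eqref{Estimate B3/2} and the hypotheses \eqref{DataB12}. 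The nonlinear estimates of Lemma \ref{lem:nonlinear} are likewise organized around pairings with $u_\phi$ and $\partial_z u_\phi$ (after the terms containing $\partial_z^2 u$ are rewritten by integration by parts in $z$), so your nonlinear budget also needs this choice of test function to close. Once this correction is made, the remainder of your outline — choice of $\lambda$, bound on $\theta$ by Cauchy–Schwarz, continuity argument giving $T^*=+\infty$, control of $(\partial_t u)_\phi$ from the equation using the $B^{5/2}$ data, and uniqueness via the same estimate applied to the difference — is consistent with the paper's proof.
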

{\color{\SAID}
\begin{remark}
The compatibility condition  $\int_0^1 u_0\, dz=0$ ensures that $\int_0^1 u(t,\cdot)\,dz$
%\eqref{verticalaverage} 
is conserved by the system \eqref{ANSG_Strip}.
\end{remark}}

The rest of the paper is arranged as follows : 
The proof of the main Theorem \ref{mainth} is presented in the next Section \ref{Sect dem th} and the Appendices \ref{app lem:nonlinear}, \ref{app nonlin} and \ref{app estim-Iq}
are devoted to the proof of estimates used in Section \ref{Sect dem th}.

%%%%%%%%%%%%%%%%%%%%%%%%%%%%%%%%%%%%%%%%%%%%%%%%%%%%%%%%%%%%%%%%%%%%%%%%%%%%%%%%%%%

\section{Global existence and uniqueness of the hydrostatic limit system}\label{Sect dem th}
	
In this section, we prove the existence of a unique global solution of the system \eqref{ANSG_Strip} 
using the method introduced by Chemin in \cite{Chemin2004}. 
We recall that, to be able to deal with the lost of one derivative in the tangential direction $x$, 
we work in the same functional settings as in \cite{PZZ2020}.

\subsection{Energy-type \emph{a priori} estimates} %\eqref{Estimate B3/2}}
	
The aim of the first part of this section is to prove the energy-type estimate \eqref{Estimate B3/2}. 
Our proof is based on the following important estimates of the nonlinear terms. We recall that $\phi$, $\theta$ are the auxiliary functions defined as in \eqref{auxiliary_functions} and \eqref{Analytic_Bande} and we set
\[T^* =\sup \left\{t>0, \quad \theta(t)< \frac{a}{ \lambda} \right\}. \]
	
\begin{lemma} \label{lem:nonlinear}
	Let $s>0$, $0<T<T^*$,  $R>0$. There exist a generic constant $C\geq 1$
	and some square root summable positive sequences ($\sum_q\tilde{d}_q^\frac12 =1$, $\sum_q\check{d}_q^\frac12 = 1$) such that 
		\begin{gather} 
			\label{estimate_1}
			\int_0^T \abs{\psca{e^{Rt'}\Delta^h_q(u\partial_x u)_\phi,e^{Rt'} \Delta_q^h u_\phi}}\,dt' \leq 
			C 2^{-2qs}\,(\tilde{d}_q+\check{d}_q)\,\|e^{Rt'}u_\phi\|^2_{\widetilde{L}^2_{T,\theta'(t)}(B^{s+\frac{1}{2}})}, \\
			\label{estimate_2}
			\int_0^T \abs{\psca{e^{Rt'} \Delta^h_q (u\partial_x \partial_z u)_\phi,e^{Rt'} \Delta^h_q \partial_zu_{\phi} }}\,dt' 
			\leq C 2^{-2qs}\,(\tilde{d}_q+\check{d}_q)\, \|e^{Rt'}\partial_zu_\phi\|^2_{\widetilde{L}^2_{T,\theta'(t)}(B^{s+\frac{1}{2}})}, \\
			\label{estimate_3}
			\int_0^T \abs{\psca{e^{Rt'}\Delta^h_q(\partial_z u\partial_x  u)_\phi,e^{Rt'} \Delta_q^h  \partial_z u_\phi}}\,dt' 
			\leq C  2^{-2qs} \,(\tilde{d}_q+\check{d}_q)\,\|e^{Rt'}\partial_z u_\phi\|^2_{\widetilde{L}^2_{T,\theta'(t)}(B^{s+\frac{1}{2}})}, \\
			\label{estimate_4}
			\int_0^T \abs{\psca{e^{Rt'}\Delta^h_q(\partial_z u\partial_x \partial_z u)_\phi,e^{Rt'} \Delta_q^h u_\phi}}\,dt' 
			\leq C  2^{-2qs}\,(\tilde{d}_q+\check{d}_q)\, \|e^{Rt'} \partial_z u_\phi\|^2_{\widetilde{L}^2_{T,\theta'(t)}(B^{s+\frac{1}{2}})}, \\
			\label{estimate_5}
			\int_0^T \abs{\psca{e^{Rt'}\Delta^h_q(v\partial_z u)_\phi,e^{Rt'} \Delta_q^h u_\phi}}\,dt' 
			\leq C 2^{-2qs}\,(\tilde{d}_q+\check{d}_q)\, \|e^{Rt'} \partial_z u_\phi\|^2_{\widetilde{L}^2_{T,\theta'(t)}(B^{s+\frac{1}{2}})}
\end{gather}
and
\begin{multline} \label{estimate_6}
			\int_0^T \abs{\psca{e^{Rt'}\Delta^h_q(v\partial^2_z u)_\phi,e^{Rt'} \Delta_q^h \partial_z u_\phi}}\,dt'\\
			\leq C 2^{-2qs} \,(\tilde{d}_q+\check{d}_q)\,\|e^{Rt'}\partial _z u_\phi\|_{\widetilde{L}^2_{T,\theta'(t)}(B^{s+\frac{1}{2}})}\left(\|e^{Rt'}\partial _z u_\phi\|_{\widetilde{L}^2_{T,\theta'(t)}(B^{s+\frac{1}{2}})}+\|\partial_z  u_\phi\|^{\frac{1}{2}}_{L^{\infty}_T({B^{\frac{3}{2}}})}\, \|e^{Rt'} \partial^2_z u_\phi\|_{\widetilde{L}^2_{T}(B^s)} \right).
\end{multline}
\end{lemma}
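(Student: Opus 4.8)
The plan is to estimate each of the six nonlinear terms by the same mechanism: apply the dyadic operator $\Delta_q^h$, use Bony's paraproduct decomposition in the horizontal variable, and exploit the key smoothing structure coming from the analytic weight $e^{\phi(t,D_x)}$. Throughout, I would rely on the elementary but crucial observation that the product of two functions carries the \emph{sum} of their frequencies, so that $e^{\phi(t,D_x)}(fg)$ can be compared to $f_\phi\,g_\phi$ up to a controllable error, because $\phi(t,\xi+\eta)\le\phi(t,\xi)+\phi(t,\eta)$ by subadditivity of $|\cdot|$. This is exactly the trick from Chemin \cite{C2004,CGP2011} and \cite{PZZ2020} that converts the nonlinearity into a sum of dyadic pieces without paying for the analytic weight. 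Because we are working below $T^*$, the phase $a-\lambda\theta(t)$ stays positive, so these majorizations are licit.

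For each estimate I would first write the term as $\Delta_q^h(fg)_\phi$ and split $fg = T_f g + T_g f + R(f,g)$ into the two paraproducts and the remainder. The homogeneous term $\partial_x$ in every nonlinearity is what produces the anticipated loss of one horizontal derivative; the whole point is that the weighted norm $\|\cdot\|_{\widetilde L^2_{T,\theta'(t)}(B^{s+\frac12})}$ on the right-hand side already contains the extra half-derivative (the shift from $B^s$ to $B^{s+\frac12}$) paid for by the factor $\theta'(t)$ under the time integral, which is precisely $\|\partial_z^2 u_\phi\|_{B^{1/2}}$ by \eqref{Analytic_Bande}. So after taking the $L^2$ inner product against $e^{Rt'}\Delta_q^h u_\phi$ (or $\partial_z u_\phi$), I would bound each paraproduct piece using $L^2$--$L^\infty$ Hölder in $x$ together with Bernstein's inequality to trade the $\partial_x$ for a factor $2^q$, and then use Remark \ref{rem dqf} to extract the summable sequences $d_q$. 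The factor $2^{-2qs}$ on the right and the summability $\sum_q\tilde d_q^{1/2}=\sum_q\check d_q^{1/2}=1$ emerge from reindexing the convolution sums over the two (or three) frequency blocks and applying Young's inequality for series, which is a routine bookkeeping once the frequency localization of each factor is pinned down.

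The estimates \eqref{estimate_1}--\eqref{estimate_5} are all of the same flavor and differ only in which factor carries the vertical derivative $\partial_z$; here the essential input is that for the incompressible vertical velocity one has, by Remark \ref{rem incompressible}, $v=-\int_0^z\partial_x u\,d\tilde z$, so $v$ can be controlled in $L^\infty_z$ by $\partial_x u$ in $B^{1/2}$ (using the one-dimensional Sobolev-type embedding in the $z$ variable together with the $B^s$ structure in $x$), which is what lets us treat the transport terms $v\partial_z u$ in \eqref{estimate_5} on the same footing as the purely horizontal products. The genuinely harder case is \eqref{estimate_6}, the term $v\partial_z^2 u$ tested against $\partial_z u_\phi$: here the second vertical derivative cannot be absorbed purely by horizontal smoothing, and the right-hand side is correspondingly more complicated, involving a product of a $\widetilde L^2_{T,\theta'}(B^{s+1/2})$ norm with the mixed quantity $\|\partial_z u_\phi\|_{L^\infty_T(B^{3/2})}^{1/2}\,\|e^{Rt'}\partial_z^2 u_\phi\|_{\widetilde L^2_T(B^s)}$. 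The main obstacle will be organizing this split so that the $\partial_z^2 u_\phi$ factor is measured in $\widetilde L^2_T(B^s)$ (no $\theta'$ weight, no extra half-derivative) while the remaining derivative loss is shifted onto a $\partial_z u_\phi$ factor that \emph{does} carry the $\theta'$-weighted $B^{s+1/2}$ norm, and one must insert $\|\partial_z u_\phi\|_{L^\infty_T(B^{3/2})}^{1/2}$ via an interpolation/Hölder step to close the balance of regularity. I would handle this last term by integrating $\partial_z$ by parts in the vertical variable to move a derivative off $\partial_z^2 u$, rewrite $v\partial_z^2 u=\partial_z(v\partial_z u)-(\partial_z v)\partial_z u = \partial_z(v\partial_z u)+(\partial_x u)\partial_z u$ using incompressibility, and then estimate the two resulting pieces separately, which is where the asymmetric right-hand side of \eqref{estimate_6} comes from. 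Since the detailed paraproduct bookkeeping is routine but lengthy, I would relegate the full computations for \eqref{estimate_1}--\eqref{estimate_5} and \eqref{estimate_6} to the appendices, as the paper announces.
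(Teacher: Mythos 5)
Your outline for \eqref{estimate_1}--\eqref{estimate_5} is essentially the paper's own proof: Bony decomposition in the horizontal variable, the subadditivity $\phi(t,\xi)\le\phi(t,\xi-\eta)+\phi(t,\eta)$ to pass the analytic weight onto each factor (via the auxiliary functions $f^{+}=\mathcal{F}^{-1}_h(|\widehat f|)$), Bernstein to convert $\partial_x$ into $2^{q'}$, the representation $v_\phi=-\int_0^z\partial_x u_\phi\,dz'$ from incompressibility, the Poincar\'e inequality in $z$ to make $\theta'(t)=\|\partial_z^2u_\phi\|_{B^{1/2}}$ appear as the $L^\infty$ bound of the low-frequency factors, and Cauchy--Schwarz in time splitting $\theta'$ evenly between the two dyadic blocks. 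That part is sound.

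The gap is in \eqref{estimate_6}, and it is a missing idea rather than bookkeeping. Your integration by parts replaces $\psca{\Delta_q^h(v\,\partial_z^2u)_\phi,\Delta_q^h\partial_zu_\phi}$ by $-\psca{\Delta_q^h(v\,\partial_zu)_\phi,\Delta_q^h\partial_z^2u_\phi}+\psca{\Delta_q^h(\partial_xu\,\partial_zu)_\phi,\Delta_q^h\partial_zu_\phi}$; the second piece is indeed \eqref{estimate_3}, but the first piece is no easier than what you started with, and you do not say how to estimate it. If you bound the low-frequency part of $v$ by the standard estimate $\|S^h_{q'-1}v^{+}_\phi\|_{L^\infty}\lesssim 2^{q'}\|\partial_z^2u_\phi\|_{B^{1/2}}=2^{q'}\theta'(t)$, then, because the test function $\Delta_q^h\partial_z^2u_\phi$ now sits in $L^2$, Cauchy--Schwarz in time yields either $\|e^{Rt'}\partial_z^2u_\phi\|_{\widetilde L^2_{T,\theta'}(B^{s})}$ (splitting $\theta'$ evenly) or $\|\partial_z^2u_\phi\|^{1/2}_{L^\infty_T(B^{1/2})}\,\|e^{Rt'}\partial_z^2u_\phi\|_{\widetilde L^2_T(B^{s})}$ (putting $\theta'$ on one factor). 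Neither matches the right-hand side of \eqref{estimate_6}, and—this is the real obstruction—neither is usable in the closing energy argument: $\partial_z^2u_\phi$ is only ever controlled in $\widetilde L^2_T$, never in $L^\infty_T$, and carries no $\lambda\theta'|D_x|^{1/2}$ smoothing term on the left-hand side of \eqref{Dyadic}; a vertical derivative cannot be traded against a horizontal one. What actually makes \eqref{estimate_6} close in the paper is the vertical (Poincar\'e-type) interpolation inequality $\|\Delta_q^hu_\phi\|_{L^2}\lesssim\|\Delta_q^h\partial_zu_\phi\|_{L^2}^{1/2}\,\|\Delta_q^h\partial_z^2u_\phi\|_{L^2}^{1/2}$, injected through the incompressibility representation of $v$ to give $\|S^h_{q'-1}v^{+}_\phi\|_{L^\infty}\lesssim 2^{q'/2}\,\|\partial_zu_\phi\|^{1/2}_{B^{3/2}}\,\|\partial_z^2u_\phi\|^{1/2}_{B^{1/2}}$; after Cauchy--Schwarz, only the half power $\|\partial_z^2u_\phi\|^{1/2}_{B^{1/2}}=(\theta')^{1/2}$ is paired with $\Delta_q^h\partial_zu_\phi$, while the half power $\|\partial_zu_\phi\|^{1/2}_{B^{3/2}}$ is extracted in $L^\infty_T$ and leaves $\partial_z^2u_\phi$ in the unweighted $\widetilde L^2_T(B^s)$—this is precisely the asymmetric factor in \eqref{estimate_6}. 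The paper applies this bound directly to $v\,\partial_z^2u$ with no integration by parts; your route (which mirrors the paper's treatment of $I_{4,q}$, $I_{5,q}$ in the uniqueness proof) can also be completed, but only with this same inequality, so invoking an unspecified ``interpolation/H\"older step'' leaves out the one ingredient on which the estimate turns.
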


The proof of this lemma is given in appendix  \ref{app lem:nonlinear}. 
For $(u,v)$ solution of \eqref{ANSG_Strip}, we define $(u_{\phi},v_{\phi})$ 
be as in \eqref{auxiliary_functions} and \eqref{Analytic_Bande}). Direct calculations show that $(u_{\phi},v_{\phi})$ satisfies the following system
\begin{equation}\label{ANSG_phi}
\left\{
	\begin{aligned}
			&\partial_{t}\omega_\phi+ \lambda \theta'(t) |D_x| \omega_{\phi}+(u\,\partial_{x}\omega)_{\phi}
			+(v\partial_z\omega)_{\phi}+\alpha_1^2(\partial_z u\,\partial_x \partial_z u)_{\phi}-\alpha_1^2(\partial^2_z u\partial_x u)_{\phi} = \partial^2_z \omega_{\phi}-\partial_{x}p_{\phi}
			\\
			&\partial_{z}p_{\phi}=0
			\\
			&\partial_{x} u_{\phi}+\partial_y v_{\phi} =0 
			\\ &(u_\phi,v_\phi)=0, \quad \partial_z u_\phi=0 \quad\textsl{on}\quad z=0
			\\
			&(u_\phi,v_\phi)=0, \quad \partial_zu_\phi=0 \quad \textsl{on} \quad z=1\\
			&{u_\phi}|_{t=0}=e^{a|D_x|}u_0,
	\end{aligned}
\right.
\end{equation}
where $|D_x|$ denotes the Fourier multiplier of symbol $|\xi|$. For any $q \in \mathbb{Z}$, 
applying the dyadic operator $\Delta^h_q$ to the first equation of the system \eqref{ANSG_phi} 
and taking $L^2(S)$ inner product of the obtained equation with $\Delta^h_q u_\phi$, we get
\begin{multline} \label{Energy_Equality}
		\frac{1}{2 }\frac{d}{dt} (\|\Delta^h_q u_\phi \|^2_{L^2}+\alpha_1^2 \|\Delta_q^h \partial_z u_\phi\|^2_{L^2})+\lambda \theta'(t) \left \||D_x|^{\frac{1}{2}}\Delta^h_q u_\phi \right\|^2_{L^2}+\alpha_1^2\,\lambda \theta'(t) \left \||D_x|^{\frac{1}{2}}\Delta^h_q\partial_z  u_\phi \right\|^2_{L^2}\\	
		+\|\Delta_q^h \partial_z u_\phi\|^2_{L^2}+\alpha_1^2 \,\|\Delta_q^h \partial^2_z u_\phi\|^2_{L^2}
		= - \psca{\Delta^h_q (u \partial_x \omega)_\phi,\Delta^h_q u_\phi} - \psca{\Delta^h_q (v \partial_z \omega)_\phi,\Delta^h_q u_\phi} \qquad \quad\\ 
		- \alpha_1^2 \psca{\Delta^h_q(\partial_z u\,\partial_x \partial_z u)_{\phi},\Delta^h_q u_\phi} +\alpha_1^2 \psca{\Delta^h_q(\partial^2_z u\partial_x u)_{\phi},\Delta^h_q u_\phi}.
\end{multline}
Multiplying \eqref{Energy_Equality} by $e^{2Rt}$ and integrating with respect to the time variable, we obtain
\begin{multline*}
\|e^{Rt}\Delta^h_q u_\phi
		\|^2_{L^2} + \alpha_1^2 \|e^{Rt} \Delta_q^h \partial_z u_\phi\|^2_{L^2}+2\lambda \int_0^t \theta'(t') \left \|e^{Rt} |D_x|^{\frac{1}{2}}\Delta^h_q u_\phi(t') \right\|^2_{L^2}\,dt' 
		+ 2 \int_0^t \left \|e^{Rt} \Delta^h_q \partial_z u_\phi(t') \right\|^2_{L^2}\,dt'
		\\
		 +2\lambda\,\alpha_1^2\int_0^t \theta'(t') \left \|e^{Rt} |D_x|^{\frac{1}{2}}\Delta^h_q \partial_z u_\phi(t') \right\|^2_{L^2}\,dt' 
		 +2 \alpha_1 ^2\int_0^t \left \|e^{Rt} \Delta^h_q \partial^2_z u_\phi(t') \right\|^2_{L^2}\,dt'
		\\
		=\|\Delta^h_q u_\phi(0)
		\|^2_{L^2}+ \alpha_1^2 \|\Delta_q^h \partial_z u_\phi(0)\|^2_{L^2}+2D_{1,q}+2D_{2,q}+2D_{3,q}+2D_{4,q} , 
\end{multline*}
where
\begin{align*}
		D_{1,q}=& -\int_0^t \psca{e^{Rt'}\Delta^h_q (u \partial_x \omega)_\phi,e^{Rt'}\Delta^h_q u_\phi} dt',
		\\
		D_{2,q}=&-\int_0^t \psca{e^{Rt'}\Delta^h_q (v \partial_z \omega)_\phi,e^{Rt'}\Delta^h_q u_\phi} dt',
		\\
		D_{3,q}=&- \alpha_1^2 \int_0^t \psca{e^{Rt'}\Delta^h_q(\partial_z u\,\partial_x \partial_z u)_{\phi}, e^{Rt'} \,\Delta^h_q u_\phi}\, dt',
		\\
		D_{4,q}=& \alpha_1^2 \int_0^t \psca{e^{Rt'}\Delta^h_q(\partial^2_z u\partial_x u)_{\phi}, e^{Rt'} \Delta^h_q u_\phi}. 
\end{align*}
Using the definition of $\omega$, we can write
\begin{align*}
	D_{1,q} = -\int_0^t \psca{e^{Rt'} \Delta^h_q(u \partial_xu)_\phi,e^{Rt'} \Delta^h_q u_{\phi}} t' 
	+ \alpha_1^2\int_0^t \psca{e^{Rt'}\Delta^h_q (u\partial_x \partial_z^2 u)_\phi,e^{Rt'} \Delta^h_q u_{\phi}} dt' . 
\end{align*}
Since
\begin{align*}
		\Delta^h_q(u\partial_x \partial_z^2 u)_\phi=\partial_z \Delta^h_q(u\partial_x \partial_z u)_\phi-\Delta^h_q(\partial_z u \partial_x \partial_z u)_\phi,
\end{align*}
by integration by parts with respect to z variable, we have
\begin{align*}
		\int_0^t& \psca{e^{Rt'}\Delta^h_q (u\partial_x \partial_z^2 u)_\phi,e^{Rt'} \Delta^h_q u_{\phi}} dt'\\
		&=\int_0^t \psca{e^{Rt'} \partial_z\Delta^h_q (u\partial_x \partial_z u)_\phi,e^{Rt'} \Delta^h_q u_{\phi}} dt'
		-\int_0^t \psca{e^{Rt'}\Delta^h_q (\partial_z u\partial_x \partial_z u)_\phi,e^{Rt'} \Delta^h_q u_{\phi}} dt'\\
		&=-\int_0^t \psca{e^{Rt'} \Delta^h_q (u\partial_x \partial_z u)_\phi,e^{Rt'} \Delta^h_q \partial_zu_{\phi}} dt'
		-\int_0^t \psca{e^{Rt'}\Delta^h_q (\partial_z u\partial_x \partial_z u)_\phi,e^{Rt'} \Delta^h_q u_{\phi}} dt'.
\end{align*}
Thus 
\begin{align*}
		D_{1,q} = A_q+B_q+D_{3,q}, 
\end{align*}
where
\begin{align*}
		A_q&=-\int_0^t \psca{e^{Rt'}\Delta^h_q(u\partial_x u)_\phi,e^{Rt'} \Delta_q^h u_\phi}\,dt', 
		\\
		B_q &=-\alpha_1^2\int_0^t \psca {e^{Rt'} \Delta^h_q (u\partial_x \partial_z u)_\phi,e^{Rt'} \Delta^h_q \partial_zu_{\phi} } dt' . 
\end{align*}
In a similar way, we can write 
\begin{equation*}  % \label{term_4}
		D_{4,q}=C_q+D_{3,q} , 
\end{equation*}
where
\begin{align*}
		C_q=-\alpha_1^2 \int_0^t \psca {e^{Rt'}\Delta^h_q(\partial_z u\partial_x  u)_\phi,e^{Rt'} \Delta_q^h  \partial_z u_\phi }\,dt',
\end{align*}
and
\begin{align*}
		D_{2,q}=E_q+F_q+ C_q+D_{3,q}
\end{align*}
and where
\begin{align*}
		E_q&=-\int_0^t \psca {e^{Rt'}\Delta^h_q(v\partial_z u)_\phi,e^{Rt'} \Delta_q^h u_\phi }\,dt' ,
		\\
		F_q &=-\alpha_1^2 \int_0^t \psca {e^{Rt'}\Delta^h_q(v\partial^2_z u)_\phi,e^{Rt'} \Delta_q^h \partial_z u_\phi }\,dt' .
\end{align*}
To summarize, we have
\begin{multline}  \label{Dyadic}
		\|e^{Rt}\Delta^h_q u_\phi \|^2_{L^2}+ \alpha_1^2 \|e^{Rt} \Delta_q^h \partial_z u_\phi\|^2_{L^2} + 2\lambda\,\alpha_1^2\int_0^t \theta'(t') \left \|e^{Rt'} |D_x|^{\frac{1}{2}}\Delta^h_q \partial_z u_\phi(t') \right\|^2_{L^2}\,dt'
		\\
		+2\lambda \int_0^t \theta'(t') \left \|e^{Rt'} |D_x|^{\frac{1}{2}}\Delta^h_q u_\phi(t') \right\|^2_{L^2}\,dt' +2\int_0^t \left \|e^{Rt'} \Delta^h_q \partial_z u_\phi(t') \right\|^2_{L^2}\,dt'
		\\
		+2 \alpha_1 ^2\int_0^t \left \|e^{Rt'} \Delta^h_q \partial^2_z u_\phi(t') \right\|^2_{L^2}\,dt' 
		\\
		=\|\Delta^h_q u_\phi(0) \|^2_{L^2}+ \alpha_1^2 \|\Delta_q^h \partial_z u_\phi(0)\|^2_{L^2}+2A_q+2B_q+2C_q+8D_{3,q}+2E_q+2 F_q , 
\end{multline}
where 
\begin{align*}
		A_q&=-\int_0^t \psca {e^{Rt'}\Delta^h_q(u\partial_x u)_\phi,e^{Rt'} \Delta_q^h u_\phi }\,dt' ,
		\\
		B_q &=-\alpha_1^2\int_0^t \psca {e^{Rt'} \Delta^h_q (u\partial_x \partial_z u)_\phi,e^{Rt'} \Delta^h_q \partial_zu_{\phi} }\, dt' ,
		\\
		C_q &=-\alpha_1^2 \int_0^t \psca {e^{Rt'}\Delta^h_q(\partial_z u\partial_x  u)_\phi,e^{Rt'} \Delta_q^h  \partial_z u_\phi }\,dt' ,
		\\
		D_{3,q} & =- \alpha_1^2 \int_0^t \psca{ e^{Rt'}\Delta^h_q(\partial_z u\,\partial_x \partial_z u)_{\phi}, e^{Rt'} \,\Delta^h_q u_\phi }\, dt' ,
		\\
		E_q&=-\int_0^t \psca {e^{Rt'}\Delta^h_q(v\partial_z u)_\phi,e^{Rt'} \Delta_q^h u_\phi }\,dt' , 
		\\
		F_q &=-\alpha_1^2 \int_0^t \psca {e^{Rt'}\Delta^h_q(v\partial^2_z u)_\phi,e^{Rt'} \Delta_q^h \partial_z u_\phi }\,dt.
\end{align*}
We recall that we set \[T^* =\sup \left\{t>0, \; \theta(t)< \frac{a}{ \lambda} \right\}. \]
Lemma \ref{lem:nonlinear} yields that, for any $0 < T < T^*$, we have 
\begin{multline*}
		\|e^{Rt}\Delta^h_q u_\phi \|^2_{L_T^{\infty}(L^2)} + \alpha_1^2 \|e^{Rt} \Delta_q^h \partial_z u_\phi\|^2_{L_T^{\infty}(L^2)} 
		+\lambda\,\alpha_1^2\,2^{q}\,\int_0^T \theta'(t') \left \|e^{Rt'}\Delta^h_q \partial_z u_\phi(t') \right\|^2_{L^2}\,dt'
		\\
		+\alpha_1 ^2\int_0^T \left \|e^{Rt'} \Delta^h_q \partial^2_z u_\phi(t') \right\|^2_{L^2}\,dt' 
		+\int_0^T \left \|e^{Rt'} \Delta^h_q \partial_z u_\phi(t') \right\|^2_{L^2}\,dt' 
		+\lambda\,2^{q} \int_0^T \theta'(t') \left \|e^{Rt'}\Delta^h_q u_\phi(t') \right\|^2_{L^2}\,dt'
		\\
		\leq \|\Delta^h_q u_\phi(0) \|^2_{L^2}+ \alpha_1^2 \|\Delta_q^h \partial_z u_\phi(0)\|^2_{L^2}
		+ C 2^{-2qs}d^2_q\,\|e^{Rt'}u_\phi\|^2_{\widetilde{L}^2_{T,\theta'(t)}(B^{s+\frac{1}{2}})}
		\\
		+C 2^{-2qs} \,d_q^2\,\|e^{Rt'}\partial _z u_\phi\|_{\widetilde{L}^2_{T,\theta'(t)}(B^{s+\frac{1}{2}})} 
		\left(
		\|e^{Rt'}\partial _z u_\phi\|_{\widetilde{L}^2_{T,\theta'(t)}(B^{s+\frac{1}{2}})} 
		+\|\partial_z  u_\phi\|^{\frac{1}{2}}_{L^{\infty}_T({B^{\frac{3}{2}}})}\, 
		\|e^{Rt'} \partial^2_z u_\phi\|_{\widetilde{L}^2_{T}(B^s)} 
		\right).
\end{multline*}
Multiplying the previous inequality by $2^{2qs}$, 
taking square root of resulting inequality 
and then summing with respect to $q \in \mathbb{Z}$, we get
\begin{align*}
		\|e^{Rt} u_\phi &\|_{\widetilde{L}_T^{\infty}(B^s)} + \alpha_1^2 \|e^{Rt}  \partial_z u_\phi\|_{\widetilde{L}_T^{\infty}(B^s)} +\sqrt{\lambda}\,\|e^{Rt'}u_\phi\|_{\widetilde{L}^2_{T,\theta'(t)}(B^{s+\frac{1}{2}})} +\sqrt{\lambda}\,\alpha_1\,\|e^{Rt'}\partial _z u_\phi\|_{\widetilde{L}^2_{T,\theta'(t)}(B^{s+\frac{1}{2}})}
		\\
		+& \|e^{Rt}  \partial_z u_\phi\|_{\widetilde{L}_T^{2}(B^s)} +\alpha_1 \|e^{Rt}  \partial^2_z u_\phi\|_{\widetilde{L}_T^{2}(B^s)}
		\\
		\leq & 6\|u_\phi(0) \|_{B^s}+6\, \alpha_1 \| \partial_z u_\phi(0)\|_{B^s}+ C \,\|e^{Rt'}u_\phi\|_{\widetilde{L}^2_{T,\theta'(t)}(B^{s+\frac{1}{2}})}\,+C \alpha_1\,\|e^{Rt'}\partial _z u_\phi\|_{\widetilde{L}^2_{T,\theta'(t)}(B^{s+\frac{1}{2}})}
		\\
		& +C \|\partial_z  u_\phi\|^{\frac{1}{4}}_{L^{\infty}_T({B^{\frac{3}{2}}})}\, \|e^{Rt'} \partial^2_z u_\phi\|^{\frac{1}{2}}_{\widetilde{L}^2_{T}(B^s)}\, \|e^{Rt'} \partial_z u_\phi\|^{\frac{1}{2}}_{\widetilde{L}^2_{T,\theta'(t)}(B^{s+\frac{1}{2}})}. 
\end{align*}
Therefore, there exists a constant $C_1$ such that
\begin{align*}
		\|e^{Rt} u_\phi &\|_{\widetilde{L}_T^{\infty}(B^s)} + \|e^{Rt}  \partial_z u_\phi\|_{\widetilde{L}_T^{\infty}(B^s)} +\sqrt{\lambda}\,\|e^{Rt'}u_\phi\|_{\widetilde{L}^2_{T,\theta'(t)}(B^{s+\frac{1}{2}})} +\sqrt{\lambda}\,\|e^{Rt'}\partial _z u_\phi\|_{\widetilde{L}^2_{T,\theta'(t)}(B^{s+\frac{1}{2}})}
		\\
		+& \|e^{Rt}  \partial_z u_\phi\|_{\widetilde{L}_T^{2}(B^s)} + \|e^{Rt}  \partial^2_z u_\phi\|_{\widetilde{L}_T^{2}(B^s)}
		\\
		\leq & C_1(\|u_\phi(0) \|_{B^s}+\| \partial_z u_\phi(0)\|_{B^s})+ C_1 \,\|e^{Rt'}u_\phi\|_{\widetilde{L}^2_{T,\theta'(t)}(B^{s+\frac{1}{2}})}\,+C_1 \,\|e^{Rt'}\partial _z u_\phi\|_{\widetilde{L}^2_{T,\theta'(t)}(B^{s+\frac{1}{2}})}
		\\
		& +C_1 \|\partial_z  u_\phi\|^{\frac{1}{4}}_{L^{\infty}_T({B^{\frac{3}{2}}})}\, \|e^{Rt'} \partial^2_z u_\phi\|^{\frac{1}{2}}_{\widetilde{L}^2_{T}(B^s)}\, \|e^{Rt'} \partial_z u_\phi\|^{\frac{1}{2}}_{\widetilde{L}^2_{T,\theta'(t)}(B^{s+\frac{1}{2}})} . 
\end{align*}
We remark that Young's inequality implies 
\begin{align*}
		C_1 \|\partial_z  u_\phi\|^{\frac{1}{4}}_{L^{\infty}_T({B^{\frac{3}{2}}})}\, \|e^{Rt'} \partial^2_z u_\phi\|^{\frac{1}{2}}_{\widetilde{L}^2_{T}(B^s)}\, &\|e^{Rt'} \partial_z u_\phi\|^{\frac{1}{2}}_{\widetilde{L}^2_{T,\theta'(t)}(B^{s+\frac{1}{2}})}
		\\
		& \leq C_2 \|\partial_z  u_\phi\|^{\frac{1}{2}}_{L^{\infty}_T({B^{\frac{3}{2}}})}\,\|e^{Rt'} \partial_z u_\phi\|_{\widetilde{L}^2_{T,\theta'(t)}(B^{s+\frac{1}{2}})}+\frac{1}{2}\|e^{Rt'} \partial^2_z u_\phi\|_{\widetilde{L}^2_{T}(B^s)}. 
\end{align*}
So, putting $C_3 = \max(C_1,C_2)$ and choosing
\begin{align}\label{condition}
		\lambda \geq C_3^2 \, (1+ \|\partial_z  u_\phi\|_{L^{\infty}_T({B^{\frac{3}{2}}})}),
\end{align}
we have for any $s>0$
\begin{multline} \label{Estimate_B^s}
		\|e^{Rt} u_\phi \|_{\widetilde{L}_T^{\infty}(B^s)} + \|e^{Rt}  \partial_z u_\phi\|_{\widetilde{L}_T^{\infty}(B^s)} 
		+ \|e^{Rt}  \partial_z u_\phi\|_{\widetilde{L}_T^{2}(B^s)} + \|e^{Rt}  \partial^2_z u_\phi\|_{\widetilde{L}_T^{2}(B^s)}
		\\
		\leq C_3(\|e^{a|D_x|}u_0 \|_{B^s}+\,  \|e^{a|D_x|} \partial_z u_0\|_{B^s}). \hspace{1cm}
\end{multline}
In particular, under the condition \eqref{condition}, we have
\begin{align*}
		\| \partial_z u_\phi\|_{\widetilde{L}_T^{\infty}(B^{\frac{3}{2}})} 
		\leq C_3 (\|e^{a|D_x|}u_0 \|_{B^{\frac{3}{2}}}+\,  \|e^{a|D_x|} \partial_z u_0\|_{B^{\frac{3}{2}}}) . 
\end{align*}
Then, by  taking 
$\lambda=C_3^2(1+ C_3(\|e^{a|D_x|}u_0
	\|_{B^{\frac{3}{2}}} +  
	\|e^{a|D_x|} \partial_z u_0\|_{B^{\frac{3}{2}}}))$,  
the inequality \eqref{Estimate_B^s} holds for any $s>0$.
In particular for $s=\frac{1}{2}$, we have 
\begin{align}\label{estim ut u0}
		\|e^{Rt} u_\phi \|_{\widetilde{L}_T^{\infty}(B^\frac{1}{2})}
		+ &\|e^{Rt}  \partial_z u_\phi\|_{\widetilde{L}_T^{\infty}(B^\frac{1}{2})}
		+\|e^{Rt}  \partial_z u_\phi\|_{\widetilde{L}_T^{2}(B^\frac{1}{2})}
		+\|e^{Rt}  \partial^2_z u_\phi\|_{\widetilde{L}_T^{2}(B^\frac{1}{2})}  \notag
		\\
		& \leq C_3 (\|e^{a|D_x|}u_0
		\|_{B^\frac{1}{2}}+\,  \|e^{a|D_x|} \partial_z u_0\|_{B^\frac{1}{2}}) . 
\end{align}

\medskip
	
Now we want to estimate the function $\theta(t)$ on $[0,T^*[$. Using Cauchy-Schwarz inequality, we can write 
\begin{align*}
		\theta(t)
%		&=\int_0^t \|\partial^2_z u_\phi(t')\|_{B^{\frac{1}{2}}} dt'
%		\\
		&=\int_0^t e^{-Rt'} \times e^{Rt'}\|\partial^2_z u_\phi(t')\|_{B^{\frac{1}{2}}} dt'
		\\
		& \leq \left (\int_0^ T e^{-2Rt'} dt'\right)^{\frac{1}{2}}\,
		\left (\int_0^ T \|e^{Rt'}\partial^2_z u_\phi(t')\|^2_{B^{\frac{1}{2}}} dt'\right)^{\frac{1}{2}}\,
		\\
		& \leq C_R \,\|e^{Rt}  \partial^2_z u_\phi\|_{\widetilde{L}_T^{2}(B^\frac{1}{2})}.
\end{align*}
Then, we deduce from \eqref{estim ut u0} the existence of a constant $C_4 > 0$ such that
\begin{align*}
		\theta(t) \leq C_4 \Big(\|e^{a|D_x|}u_0
		\|_{B^\frac{1}{2}}+\,  \|e^{a|D_x|} \partial_z u_0\|_{B^\frac{1}{2}} \Big) . 
\end{align*}
Taking $c$ small enough (see the assumption on the initial data \eqref{DataB12}), we obtain, for any $t \in [0, T^*[$ that
\begin{equation*} % \label{condition_1}
		\theta(t) \leq C_4 \Big(\|e^{a|D_x|}u_0
		\|_{B^\frac{1}{2}}+\,  \|e^{a|D_x|} \partial_z u_0\|_{B^\frac{1}{2}} \Big) \leq \frac{a}{2\lambda}.
\end{equation*}
Using the definition of $T^*$ and a continuity argument, we deduce $T^*=+\infty$ and that the inequality \eqref{Estimate_B^s} holds for any $T>0$.\\ 

{\sl Conclusion }: 
Taking $s=\frac{3}{2}$ in \eqref{Estimate_B^s}, we obtain Estimate \eqref{Estimate B3/2} in Theorem \ref{mainth}.

\subsection{Estimate of the time derivative  of the solution} 
In this paragraph, we give brief ideas of the proof of Estimate \eqref{time_estimate_Bs}. 
Applying $\Delta^h_q$ to \eqref{ANSG_phi} and taking the $L^2$ inner product of the obtained equation with $e^{2Rt} \Delta^h_q (\partial_t u)_\phi$, we have
\begin{align} \label{equ221}
		\|e^{Rt} \Delta^h_q (\partial_t u)_\phi \|^2_{L^2}&+\alpha_1^2\|e^{Rt} \Delta^h_q (\partial_t \partial_z u)_\phi \|^2_{L^2}\\
		=&e^{2Rt} \psca {\Delta^h_q \partial_z^2 u_\phi, \Delta^h_q (\partial_t u)_\phi }-\,\alpha_1^2 e^{2Rt} \psca {\Delta^h_q \partial_z^4 u_\phi, \Delta^h_q (\partial_t u)_\phi }\notag
		\\
		&-e^{2Rt} \psca {\Delta^h_q(u\,\partial_{x}\omega)_{\phi},\Delta^h_q (\partial_t u)_\phi } -\,e^{2Rt} \psca {\Delta^h_q(v\partial_z\omega)_{\phi},\Delta^h_q (\partial_t u)_\phi } \notag
		\\
		&-e^{2Rt}\alpha_1^2 \psca {\Delta^h_q(\partial_z u\,\partial_x \partial_z u)_{\phi},\Delta^h_q (\partial_t u)_\phi }
		+\,e^{2Rt}\alpha_1^2 \psca {\Delta^h_q(\partial^2_z u\partial_x u)_{\phi},\Delta^h_q (\partial_t u)_\phi} . \notag
\end{align}
Since $ (\partial_t u)_\phi=\partial_t u_\phi+\lambda \, \theta'(t) |D_x| u_\phi $, then
\begin{align} \label{equ222}
		e^{2Rt} \psca { \Delta^h_q \partial_z^2 u_\phi, \Delta^h_q (\partial_t u)_\phi } 
		&= e^{2Rt} \psca {\Delta^h_q \partial_z^2 u_\phi, \Delta^h_q \partial_t u_\phi }
		+\theta'(t) |D_x|e^{2Rt} \psca {\Delta^h_q \partial_z^2 u_\phi, \Delta^h_q u_\phi}% \notag
		\\
		&= -\frac{1}{2} \frac{d}{dt} \|e^{Rt} \Delta^h_q \partial_z u_\phi \|^2_{L^2}- \lambda \, \theta'(t)\|e^{Rt} |D_x|^{\frac{1}{2}} \Delta^h_q \partial_z u_\phi \|^2_{L^2} \notag
		\\
		& \leq -\frac{1}{2} \frac{d}{dt} \|e^{Rt} \Delta^h_q \partial_z u_\phi \|^2_{L^2}.\notag
\end{align}
Similar calculations give
\begin{align} \label{equ223}
		-\, e^{2Rt} \psca {\Delta^h_q \partial_z^4 u_\phi, \Delta^h_q (\partial_t u)_\phi } \, \leq -   \frac{1}{2}  \frac{d}{dt} \|e^{Rt} \Delta^h_q \partial^2_z u_\phi \|^2_{L^2}.
\end{align}
Plugging the estimates \eqref{equ222} and \eqref{equ223} into \eqref{equ221}, we get
\begin{align*}
		\|e^{Rt} \Delta^h_q (\partial_t u)_\phi \|^2_{L^2}&+\alpha_1^2\|e^{Rt} \Delta^h_q (\partial_t \partial_z u)_\phi \|^2_{L^2}+\frac{1}{2} \frac{d}{dt} \|e^{Rt} \Delta^h_q \partial_z u_\phi \|^2_{L^2}+\frac{1}{2} \alpha_1^2 \frac{d}{dt} \|e^{Rt} \Delta^h_q \partial^2_z u_\phi \|^2_{L^2}\\
		\leq 
		&-e^{2Rt} \psca {\Delta^h_q(u\,\partial_{x} u)_{\phi},\Delta^h_q (\partial_t u)_\phi>+\alpha_1^2 e^{2Rt}<\Delta^h_q(u\,\partial_{x} \partial^2_z u)_{\phi},\Delta^h_q (\partial_t u)_\phi }
		\\&-\,e^{2Rt} \psca {\Delta^h_q(v\partial_z u)_{\phi},\Delta^h_q (\partial_t u)_\phi>- \alpha_1^2 e^{2Rt}<\Delta^h_q(v\partial^2_z u)_{\phi},\Delta^h_q (\partial_t \partial_z u)_\phi }\\
		&-e^{2Rt}\alpha_1^2 \psca{ \Delta^h_q(\partial_z u\,\partial_x \partial_z u)_{\phi},\Delta^h_q (\partial_t u)_\phi} +2\,e^{2Rt}\alpha_1^2 \psca {\Delta^h_q(\partial^2_z u\partial_x u)_{\phi},\Delta^h_q (\partial_t u)_\phi }
\end{align*}
and so 
\begin{align*}
		\|e^{Rt} \Delta^h_q (\partial_t u)_\phi \|^2_{L^2}&+\alpha_1^2\|e^{Rt} \Delta^h_q (\partial_t \partial_z u)_\phi \|^2_{L^2}+\frac{1}{2} \frac{d}{dt} \|e^{Rt} \Delta^h_q \partial_z u_\phi \|^2_{L^2}+\frac{1}{2} \alpha_1^2 \frac{d}{dt} \|e^{Rt} \Delta^h_q \partial^2_z u_\phi \|^2_{L^2}\\
		\leq 
		& \; C \Big (\| e^{Rt}\Delta^h_q(u\,\partial_{x} u)_{\phi}\|^2_{L^2} +\, \|e^{Rt} \Delta^h_q(u\partial_x \partial^2_z u)_{\phi} \|^2_{L^2} +\|e^{Rt} \Delta^h_q(v\partial_z u)_{\phi} \|^2_{L^2}
		\\
		& +\|e^{Rt} \Delta^h_q(v\partial^2_z u)_{\phi} \|^2_{L^2}
		+ \| e^{Rt}\Delta^h_q(\partial_z u\,\partial_x \partial_z u)_{\phi} \|^2_{L^2} +\, \|e^{Rt} \Delta^h_q(\partial^2_z u\partial_x u)_{\phi} \|^2_{L^2} \Big ) . 
\end{align*}
Integrating with respect to time yields
\begin{align}\label{time_estimate}
		&\|e^{Rt} \Delta^h_q (\partial_t u)_\phi \|^2_{L^2_T({L^2})}+\|e^{Rt} \Delta^h_q (\partial_t \partial_z u)_\phi \|^2_{L^2_T({L^2})}+ \|e^{Rt} \Delta^h_q \partial_z u_\phi \|^2_{L^\infty_T({L^2})}+ \|e^{Rt} \Delta^h_q \partial^2_z u_\phi \|^2_{L^\infty_T({L^2})}\\  
		& \hspace{2cm} \leq C \Big ( \| e^{a |D_x|}\Delta^h_q \partial_z u_0 \|^2_{L^2} +\|e^{a |D_x|} \Delta^h_q \partial^2_z u_0 \|^2_{L^2} +\| e^{Rt}\Delta^h_q(u\,\partial_{x} u)_{\phi}\|_{L^2_T(L^2)} \notag 
		\\
		&\hspace{2.5cm}  +\|e^{Rt} \Delta^h_q(u\partial_x \partial^2_z u)_{\phi} \|_{L^2_T(L^2)} +\|e^{Rt} \Delta^h_q(v\partial_z u)_{\phi} \|_{L^2_T(L^2)}+\|e^{Rt} \Delta^h_q(v\partial^2_z u)_{\phi} \|_{L^2_T(L^2)} \notag
		\\
		&\hspace{2.5cm}  + \| e^{Rt}\Delta^h_q(\partial_z u\,\partial_x \partial_z u)_{\phi} \|_{L^2_T(L^2)} +\, \|e^{Rt} \Delta^h_q(\partial^2_z u\partial_x u)_{\phi} \|_{L^2_T(L^2)} \Big ). \notag
\end{align}
To be able to obtain Estimate \eqref{time_estimate_Bs}, we need the following lemma, the proof of which will be given in the appendix \ref{app nonlin}.
\begin{lemma} \label{law product}
We have the following estimates for the nonlinear terms : 
\begin{gather} 
	\label{Es1}
	\| e^{Rt}(u\,\partial_{x} u)_{\phi}\|_{\widetilde{L}_T^2(B^{\frac{3}{2}})} 
	\lesssim
	\|\partial_z u_\phi\|_{\widetilde{L}_T^\infty(B^\frac{1}{2})}  
	\|e^{Rt} \partial_z u_\phi\|_{\widetilde{L}_T^2(B^\frac{5}{2})} , \\
	\label{Es2}
	\|e^{Rt}(u\,\partial_x\partial^2_{z} u)_{\phi}\|_{\widetilde{L}_T^2(B^{\frac{3}{2}})}
	\lesssim
	\|\partial_z u_\phi\|_{\widetilde{L}_T^\infty(B^\frac{1}{2})}  
	\|e^{Rt} \partial^2_z u_\phi\|_{\widetilde{L}_T^2(B^\frac{5}{2})} 
	  + \|\partial_z u_\phi\|_{\widetilde{L}_T^\infty(B^\frac{5}{2})} 
	  \|e^{Rt} \partial^2_z u_\phi\|_{\widetilde{L}_T^2(B^\frac{1}{2})} , \\
	\label{Es3}
	\|e^{Rt}(v\,\partial_{z} u)_{\phi}\|_{\widetilde{L}_T^2(B^{\frac{3}{2}})}
	\lesssim
	\|\partial_z u_\phi\|_{\widetilde{L}_T^\infty(B^\frac{1}{2})}   
    \|e^{Rt} \partial_z u_\phi\|_{\widetilde{L}_T^2(B^\frac{5}{2})} , \\
	\label{Es4}
	\|e^{Rt}(v\,\partial^2_{z} u)_{\phi}\|_{\widetilde{L}_T^2(B^{\frac{3}{2}})}
	\lesssim
	\|\partial_z u_\phi\|_{\widetilde{L}_T^\infty(B^\frac{1}{2})}  
	\|e^{Rt} \partial^2_z u_\phi\|_{\widetilde{L}_T^2(B^\frac{5}{2})}
	  + \|\partial_z u_\phi\|_{\widetilde{L}_T^\infty(B^\frac{5}{2})} 
	  \|e^{Rt} \partial^2_z u_\phi\|_{\widetilde{L}_T^2(B^\frac{1}{2})} , \\
	\label{Es5}
	\|e^{Rt} (\partial_z u\,\partial_x \partial_{z} u)_{\phi}\|_{\widetilde{L}_T^2(B^{\frac{3}{2}})}
	\lesssim
	\|\partial_z u_\phi\|_{\widetilde{L}_T^\infty(B^\frac{1}{2})}  
	\|e^{Rt} \partial^2_z u_\phi\|_{\widetilde{L}_T^2(B^\frac{5}{2})}
	  + \|\partial_z u_\phi\|_{\widetilde{L}_T^\infty(B^\frac{5}{2})} 
	  \|e^{Rt} \partial^2_z u_\phi\|_{\widetilde{L}_T^2(B^\frac{1}{2})} , \\
	\label{Es6}
	\|e^{Rt}\Delta^h_q(\partial_x u\,\partial^2_{z} u)_{\phi}\|_{\widetilde{L}_T^2(B^{\frac{3}{2}})}
	\lesssim
	\|\partial_z u_\phi\|_{\widetilde{L}_T^\infty(B^\frac{1}{2})}  
	\|e^{Rt} \partial^2_z u_\phi\|_{\widetilde{L}_T^2(B^\frac{5}{2})}
	  + \|\partial_z u_\phi\|_{\widetilde{L}_T^\infty(B^\frac{5}{2})} 
	  \|e^{Rt} \partial^2_z u_\phi\|_{\widetilde{L}_T^2(B^\frac{1}{2})}.
\end{gather}
\end{lemma}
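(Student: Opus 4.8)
\emph{Overall strategy.} The six inequalities \eqref{Es1}--\eqref{Es6} share a single architecture, so the plan is to prove one template estimate and then indicate the modifications for each term. First I would remove the analytic and time weights. Since $\phi(t,\xi)=(a-\lambda\theta(t))|\xi|$ with $a-\lambda\theta(t)\geq 0$ for $t<T^*$, the bound $|\xi|\leq |\xi-\eta|+|\eta|$ gives $e^{\phi(\xi)}\leq e^{\phi(\xi-\eta)}e^{\phi(\eta)}$, hence the pointwise convolution inequality $|\widehat{(fg)_\phi}(\xi)|\leq\big(|\widehat{f_\phi}|*|\widehat{g_\phi}|\big)(\xi)$. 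This reduces every product $(fg)_\phi$ to the genuine product of $f_\phi$ and $g_\phi$, so from here on I work with $u_\phi$, $\partial_z u_\phi$, $\partial^2_z u_\phi$ and $v_\phi$. The factor $e^{Rt}$ is attached to a single factor of each product, namely the high-regularity one measured in $\widetilde L^2_T$, while the other stays in $\widetilde L^\infty_T$; a Hölder inequality in time then recombines them into the $\widetilde L^2_T$ norm on the left, which is why the first factor on each right-hand side carries no weight $e^{Rt}$.

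\emph{The two anisotropic ingredients.} After applying $\Delta^h_q$ and Bony's paraproduct decomposition in the horizontal variable $x$, each dyadic block is a product that I estimate in $L^2(\mathcal S)=L^2_{x,z}$ by an anisotropic Hölder inequality, placing the low-frequency factor in $L^\infty_x L^\infty_z$ and the localized factor in $L^2_{x,z}$ (or $L^2_x L^\infty_z$). Two elementary facts make the right-hand norms appear. First, in one horizontal dimension Bernstein's inequality gives the embedding $B^{1/2}\hookrightarrow L^\infty_x$, so that $L^\infty_x$ norms cost a $B^{1/2}$ norm. Second, the boundary conditions of \eqref{ANSG_Strip} provide two Poincaré-type inequalities in $z$: from $u|_{z=0,1}=0$ one gets $\|u(x,\cdot)\|_{L^\infty_z}\lesssim\|\partial_z u(x,\cdot)\|_{L^2_z}$ and, blockwise, $\|u\|_{B^s}\lesssim\|\partial_z u\|_{B^s}$; from $\partial_z u|_{z=0,1}=0$ one gets the analogous bounds with $u$ replaced by $\partial_z u$. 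Combining the two facts yields $\|u\|_{L^\infty_{x,z}}\lesssim\|\partial_z u\|_{B^{1/2}}$, which is precisely the ubiquitous first factor $\|\partial_z u_\phi\|_{\widetilde L^\infty_T(B^{1/2})}$. For \eqref{Es1} this already closes the argument, since $\|u\partial_x u\|_{B^{3/2}}\lesssim\|u\|_{L^\infty_{x,z}}\|\partial_x u\|_{B^{3/2}}\lesssim\|\partial_z u\|_{B^{1/2}}\|u\|_{B^{5/2}}\lesssim\|\partial_z u\|_{B^{1/2}}\|\partial_z u\|_{B^{5/2}}$, up to the summable-sequence bookkeeping of Remarks \ref{rem dq} and \ref{rem dqf}. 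The remaining pure-product estimates \eqref{Es2}, \eqref{Es5}, \eqref{Es6} follow the same scheme; because the derivatives are distributed more evenly, both high--low frequency assignments contribute, producing the two terms on their right-hand sides.

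\emph{The terms containing $v$.} For \eqref{Es3} and \eqref{Es4} I would first replace $v$ using the incompressibility formula of Remark \ref{rem incompressible}, $v=-\int_0^z\partial_x u\,d\tilde z$. This gives $\|v(x,\cdot)\|_{L^\infty_z}\leq\|\partial_x u(x,\cdot)\|_{L^2_z}$ and shows that $v$ has the same horizontal regularity as $\partial_x u$; combined with the embedding $B^{1/2}\hookrightarrow L^\infty_x$ and the Poincaré inequality in $z$, one obtains $\|v\|_{L^\infty_{x,z}}\lesssim\|\partial_z u\|_{B^{3/2}}$ and $\|v\|_{B^s}\lesssim\|\partial_z u\|_{B^{s+1}}$. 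Running the paraproduct template with these substitutions, and using the interpolation inequality $\|\partial_z u\|_{B^{3/2}}\lesssim\|\partial_z u\|^{1/2}_{B^{1/2}}\|\partial_z u\|^{1/2}_{B^{5/2}}$ (Cauchy--Schwarz on the dyadic sum), closes \eqref{Es3}, and similarly \eqref{Es4}.

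\emph{Main obstacle.} The delicate point is the bookkeeping of the anisotropy: one must assign the high and low frequencies in each paraproduct so that the differentiated factor always lands in the $B^{5/2}$ (or $B^{1/2}$) slot with the correct time integrability, while the undifferentiated factor is absorbed into $\|\partial_z u_\phi\|_{\widetilde L^\infty_T(B^{1/2})}$ through the combined use of the \emph{two} distinct boundary conditions. Getting the derivative count exactly right for the $v$-terms, where one trades the loss of a horizontal derivative in $v$ against the $z$-integration and Poincaré, is where most of the care is required; the convolution reduction of the first step and the summable-sequence estimates of Remarks \ref{rem dq}--\ref{rem dqf} are then routine.
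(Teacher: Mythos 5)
Your proposal follows essentially the same route as the paper's own proof: Bony decomposition in the horizontal variable, the convolution inequality $e^{\phi(t,\xi)}\le e^{\phi(t,\xi-\eta)}e^{\phi(t,\eta)}$ (the paper's $f^+$ trick) to reduce $(fg)_\phi$ to products of $f_\phi$ and $g_\phi$, horizontal Bernstein ($B^{\frac12}\hookrightarrow L^\infty_x$) combined with the vertical Poincar\'e inequalities coming from the boundary conditions, the incompressibility relation $v_\phi=-\int_0^z\partial_x u_\phi\,d\tilde z$ for \eqref{Es3}--\eqref{Es4}, blockwise time integration, and the summable-sequence bookkeeping of Remarks \ref{rem dq} and \ref{rem dqf}. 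The only noticeable variation is cosmetic: for the $v$-terms you pass through $\|v_\phi\|_{L^\infty}\lesssim\|\partial_z u_\phi\|_{B^{\frac32}}$ and then interpolate, whereas the paper keeps the frequency factor explicit, $\|S^h_{q'-1}v^+_\phi\|_{L^\infty}\lesssim 2^{q'}\|\partial_z u_\phi\|_{B^{\frac12}}$, and absorbs $2^{q'}$ directly into the $B^{\frac52}$ norm of the other factor; both balancings lead to the stated right-hand sides (yours just requires pairing the two dyadic factors with the $\widetilde{L}^\infty_T$ and $\widetilde{L}^2_T$ norms so that each contributes its own summable sequence).
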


{\sl Conclusion}  :
Multiplying  \eqref{time_estimate} by $2^{\frac{3q}{2}}$, summing up with respect to $q \in \mathbb{Z}$, 
and then using Lemma \ref{law product}, assumption on initial data \eqref{DataB12} and Estimate \eqref{Estimate_B^s}, 
we  obtain Estimate \eqref{time_estimate_Bs} in Theorem \ref{mainth}.

\subsection{Uniqueness of the solution} 
We suppose that $u_1, u_2$ are two solutions of \eqref{ANSG_Strip} on $[0,T]$ with the same initial data. 
Let $U=u_1-u_2$, $V=v_1-v_2$, $P=p_1-p_1$
then, $(U,V,P)$ satisfies the following equations
\begin{equation}\label{uniquness_sol}
		\left\{
		\begin{aligned}
			&\partial_{t}(U-\alpha_1^2 \partial_z^2 U)+u_1\,\partial_{x}(U-\alpha_1^2 \partial_z^2 U)+U\partial_x \omega_2+v_1\partial_z(U-\alpha_1^2 \partial_z^2 U)+V\,\partial_z \omega_2+B \\
			& \hspace{2cm} = \partial^2_z (U-\alpha_1^2 \partial_z^2 U)-\partial_{x}P
			\\
			&\partial_{z}P=0
			\\
			&\partial_{x} U+\partial_z V=0
			\\ &(U,V)=0, \quad \partial_z U=0 \quad\textsl{on}\quad z=0
			\\
			&(U,V)=0, \quad \partial_z  U=0 \quad \textsl{on} \quad z=1\\
			& U|_{t=0}=0,
		\end{aligned}
		\right.
\end{equation}
where we set $\omega_2=u_2-\alpha_1^2 \partial^2_z u_2$ and where
\begin{equation*}
		B = \alpha_1^2 \partial_z u_1 \partial_x\partial_z U
		+ \alpha_1^2 \partial_z U \partial_x\partial_z u_2
		- \alpha_1^2 \partial^2_z u_1 \partial_x U 
		- \alpha_1^2 \partial_z^2 U \, \partial_x u_2.
\end{equation*}
We now consider the following auxiliary functions
\begin{equation*}
		\Phi(t,\xi)=(a-\mu\Theta(t))|\xi| \quad \text{with} \quad \Theta'(t)=\|\partial_z^2 u_{1_{\phi}}(t)\|_{B^{\frac{1}{2}}} +\|\partial_z^2 u_{2_{\phi}}(t)\|_{B^{\frac{1}{2}}}
\end{equation*} 
and following \eqref{auxiliary_functions} we define 
\begin{align*}%\label{auxiliary_functions2}
		f_{\Phi}(t,x,y)=e^{\Phi(t,D_x)}\,f(t,x,y) := \mathcal{F}^{-1}_{\xi \to x}(e^{\Phi(t,\xi)}\,\widehat{f}(t,\xi,y)),
\end{align*}
where $\mu \geq \lambda$ will be determined later. 
From Theorem \ref{mainth}, we deduce that
\begin{align*}
\| u_{1_{\phi}} \|_{L^{\infty}(B^{\frac{3}{2}})} +\| u_{2_{\phi}} \|_{L^{\infty}(B^{\frac{3}{2}})}
+\|\partial_zu_{1_{\phi}} \|_{L^{\infty}(B^{\frac{3}{2}})}  
+\|\partial_zu_{2_{\phi}} \|_{L^{\infty}(B^{\frac{3}{2}})}   \leq M , 
\end{align*}
where $M \geq 1$ is a constant.
Following the proof of \eqref{Dyadic}, we get from \eqref{uniquness_sol} the following estimate
\begin{multline}  \label{UNICITY}
		\|e^{Rt}\Delta^h_q U_\Phi \|^2_{L^2}+ \alpha_1^2 \|e^{Rt} \Delta_q^h \partial_z U_\Phi\|^2_{L^2} + 2\eta\,\alpha_1^2\int_0^t \Theta'(t') \left \|e^{Rt'} |D_x|^{\frac{1}{2}}\Delta^h_q \partial_z U_\Phi(t') \right\|^2_{L^2}\,dt'
		\\
		+2\mu \int_0^t \Theta'(t') \left \|e^{Rt'} |D_x|^{\frac{1}{2}}\Delta^h_q U_\Phi(t') \right\|^2_{L^2}\,dt' +2\int_0^t \left \|e^{Rt'} \Delta^h_q \partial_z U_\Phi(t') \right\|^2_{L^2}\,dt'
		\\
		+2 \alpha_1 ^2\int_0^t \left \|e^{Rt'} \Delta^h_q \partial^2_z U_\Phi(t') \right\|^2_{L^2}\,dt' 
		\\
		=2\,I_{1,q}+2\,I_{2,q}+2\,I_{3,q}+2\,I_{4,q}+2\,I_{5,q}+2\,I_{6,q}+2\,I_{7,q}+2\,I_{8,q} , 
\end{multline}
where
\begin{align*}
I_{1,q}= &-\alpha_1^2 \int_0^t \psca{e^{Rt'}\Delta^h_q(\partial_z u_1 \partial_x\partial_z U)_{\Phi}, e^{Rt'} \Delta^h_q U_\Phi} dt' ,
\\
	I_{2,q}=& -\int_0^t \psca{e^{Rt'}\Delta^h_q (u_1\,\partial_{x}(U-\alpha_1^2 \partial_z^2 U))_\Phi,e^{Rt'}\Delta^h_q U_\phi} dt',
	\\
	I_{3,q}= &\, \alpha_1^2 \int_0^t \psca{e^{Rt'}\Delta^h_q(\partial^2_z u_1 \partial_x U)_{\Phi}, e^{Rt'} \Delta^h_q U_\Phi} dt',
	\\
	I_{4,q}= &- \int_0^t \psca{e^{Rt'}\Delta^h_q(V\,\partial_z \omega_2)_{\Phi}, e^{Rt'} \Delta^h_q U_\Phi} dt',
	\\
	I_{5,q}=&-\int_0^t \psca{e^{Rt'}\Delta^h_q (v_1\partial_z(U-\alpha_1^2 \partial_z^2 U))_\Phi,e^{Rt'}\Delta^h_q U_\Phi} dt',
	\\
	I_{6,q}= &\, \alpha_1^2 \int_0^t \psca{e^{Rt'}\Delta^h_q(\partial_z^2 U \, \partial_x u_2)_{\Phi}, e^{Rt'} \Delta^h_q U_\Phi} dt',
	\\
	I_{7,q}= &- \alpha_1^2\int_0^t \psca{e^{Rt'}\Delta^h_q(\partial_z U \partial_x\partial_z u_2)_{\Phi}, e^{Rt'} \Delta^h_q U_\Phi} dt' , 
	\\
	I_{8,q}=&-  \int_0^t \psca{e^{Rt'}\Delta^h_q(U\partial_x \omega_2)_{\Phi}, e^{Rt'} \,\Delta^h_q U_\Phi}\, dt'  . 
\end{align*}
The terms $I_{j,q}$, $j \in \set{1,\ldots,8}$ can be controlled as in the following lemma. The proof of this lemma is very close to the proof of Lemma
\ref{lem:nonlinear}. We will give a brief version of this proof in the appendix \ref{app estim-Iq}. 
\begin{lemma} \label{lem:estim-Iq}
Let $s \in ]0, 1[$, $T > 0$,  $R>0$. There exists a generic constant $C\geq 1$ such that, for any  $0 < t < T$, we have
\begin{gather} 
	\label{I1q}
		|I_{1,q}| \leq C 2^{-2qs} d_q^2 \, \|e^{Rt'} \partial_z U_\Phi\|^2_{\widetilde{L}^2_{T,\Theta'(t)}(B^{s+\frac{1}{2}})} , \\
	\label{I2q}
 		|I_{2,q}| \leq C 2^{-2qs}d^2_q\,\|e^{Rt'} \partial_z U_\Phi\|^2_{\widetilde{L}^2_{T,\Theta'(t)}(B^{s+\frac{1}{2}})}  ,   \\
    \label{I3q}
		|I_{3,q}| \leq C  2^{-2qs} d_q^2\, \|e^{Rt'} \partial_z U_\Phi\|^2_{\widetilde{L}^2_{T,\Theta'(t)}(B^{s+\frac{1}{2}})} , \\
    \label{I4q}
		|I_{4,q}| \leq C 2^{-2qs} d_q^2 \, \|e^{Rt'} \partial_z U_\Phi\|^2_{\widetilde{L}^2_{T,\Theta'(t)}(B^{s+\frac{1}{2}})} , \\
    \label{I5q}
		|I_{5,q}| \leq C 2^{-2qs}d^2_q\,\| u_{1_{\Phi}} \|_{L^{\infty}_T(B^{\frac{3}{2}})}^{\frac{1}{2}} \,
		\|e^{Rt'} \partial^2_z U_\Phi\|_{\widetilde{L}^2_{T}(B^{\frac{1}{2}})} \, 
		\|e^{Rt'} \partial_z U_\Phi\|^2_{\widetilde{L}^2_{T,\Theta'(t)}(B^{s+\frac{1}{2}})} , \\
    \label{I6q}
 		 |I_{6,q}| \leq C 2^{-2qs}d^2_q\,\| u_{2_{\Phi}} \|_{L^{\infty}_T(B^{\frac{3}{2}})}^{\frac{1}{2}} \,
 		 \|e^{Rt'} \partial^2_z U_\Phi\|_{\widetilde{L}^2_{T}(B^{\frac{1}{2}})} \, 
 		 \|e^{Rt'} \partial_z U_\Phi\|^2_{\widetilde{L}^2_{T,\Theta'(t)}(B^{s+\frac{1}{2}})} ,
\end{gather}
\begin{align}	\label{I7q}
|I_{7,q}| \leq 
  C  2^{-2qs}d^2_q & \|e^{Rt'} \partial_z U_\Phi\|_{\widetilde{L}^2_{T,\Theta'(t)}(B^{s+\frac{1}{2}})}, \\
  &\times 
  \left(
    \|e^{Rt'} \partial_z U_\Phi\|_{\widetilde{L}^2_{T,\Theta'(t)}(B^{s+\frac{1}{2}})}
    +\|\partial_zu_{2_{\Phi}} \|_{L^{\infty}_T(B^{\frac{3}{2}})}^{\frac{1}{2}} \,
    \|e^{Rt'} \partial_z U_\Phi\|_{\widetilde{L}^2_{T}(B^{s+\frac{1}{2}})} 
  \right) , \notag
\end{align}
\begin{align}\label{I8q}
\abs{I_{8,q}} \leq 
  C  2^{-2qs}d^2_q & \|e^{Rt'} \partial_z U_\Phi\|_{\widetilde{L}^2_{T,\Theta'(t)}(B^{s+\frac{1}{2}})}
\\
  &\times \left(
      \|e^{Rt'} \partial_z U_\Phi\|_{\widetilde{L}^2_{T,\Theta'(t)}(B^{s+\frac{1}{2}})}
      +\|\partial_zu_{2_{\Phi}} \|_{L^{\infty}_T(B^{\frac{3}{2}})}^{\frac{1}{2}} \, 
       \|e^{Rt'} \partial_z U_\Phi\|_{\widetilde{L}^2_{T}(B^{s+\frac{1}{2}})} 
  \right) .  \notag
\end{align}			
\end{lemma}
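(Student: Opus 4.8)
The plan is first to establish \eqref{UNICITY} by reproducing, on the difference system \eqref{uniquness_sol}, the computation that led to \eqref{Dyadic}: apply $\Delta^h_q$ to the first equation of \eqref{uniquness_sol}, take the $L^2(\mathcal S)$ inner product with $\Delta^h_q U_\Phi$, multiply by $e^{2Rt}$, integrate in time and organise the nonlinear contributions into $I_{1,q},\dots,I_{8,q}$. Each $I_{j,q}$ is then estimated through Bony's paraproduct decomposition in the horizontal variable $x$, the essential tool being the subadditivity of the analytic phase: since $\Phi(t,\xi)=(a-\mu\Theta(t))|\xi|$ is built from $|\xi|$ and $|\xi|\le|\xi-\eta|+|\eta|$, we have $e^{\Phi(t,\xi)}\le e^{\Phi(t,\xi-\eta)}e^{\Phi(t,\eta)}$ whenever $a-\mu\Theta(t)\ge0$. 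This pointwise bound transfers the exponential weight onto the two factors of each product, so that every analytic estimate is reduced to a standard product law in the Besov scale $B^s(\mathcal S)$; the restriction $s\in\,]0,1[$ keeps all paraproduct and remainder pieces within the range where these laws hold without endpoint corrections.

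The compensation of the tangential derivative loss is identical to that in Lemma \ref{lem:nonlinear}. Every nonlinear term carries one factor $\partial_x$, that is a factor $|\xi|$ in frequency; splitting $|\xi|\le|\xi-\eta|+|\eta|$ we place at least a half derivative $|D_x|^{1/2}$ on one of the two factors, and this half derivative is absorbed by the weight $\Theta'(t)$ contained in the $\widetilde L^2_{T,\Theta'(t)}$ norms, which is exactly what produces the index $s+\tfrac12$ on the right-hand sides. The four terms $I_{1,q},I_{2,q},I_{3,q},I_{4,q}$ only involve first-order vertical derivatives of the difference $U$ (for $I_{2,q}$ this is achieved after the same $z$-integration by parts as for $D_{1,q}$, using $\Delta^h_q(u_1\partial_x\partial_z^2U)_\Phi=\partial_z\Delta^h_q(u_1\partial_x\partial_zU)_\Phi-\Delta^h_q(\partial_zu_1\partial_x\partial_zU)_\Phi$ to move one vertical derivative onto $U_\Phi$), so each of them closes purely against $\|e^{Rt'}\partial_zU_\Phi\|_{\widetilde L^2_{T,\Theta'(t)}(B^{s+\frac12})}$, giving the clean bounds \eqref{I1q}--\eqref{I4q}; here the coefficients $u_1,\partial_zu_1,\partial_z^2u_1,\omega_2$ and the vertical velocity $V$ are controlled through the a priori bound $M$ and, for $V$, through the incompressibility relation $V(t,x,z)=-\int_0^z\partial_xU(t,x,\tilde z)\,d\tilde z$ of Remark \ref{rem incompressible}.

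The genuinely delicate terms are $I_{5,q},I_{6,q},I_{7,q},I_{8,q}$, in which the convection or the coefficient forces a second vertical derivative $\partial_z^2U$ of the difference to appear. As already seen in \eqref{estimate_6}, such a factor cannot be entirely absorbed by the smoothing gain: after distributing the single available $|D_x|^{1/2}$ one must leave $\partial_z^2U$ (or $\partial_zU$) in an unweighted $\widetilde L^2_T$ norm and pay for it with an interpolation factor of the form $\|u_{i_\Phi}\|_{L^\infty_T(B^{3/2})}^{1/2}$ or $\|\partial_zu_{2_\Phi}\|_{L^\infty_T(B^{3/2})}^{1/2}$, which is precisely the structure of the right-hand sides of \eqref{I5q}--\eqref{I8q}. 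I expect this balancing to be the main obstacle: one has to decide on which factor to spend the half tangential derivative, reorganise $I_{5,q}$ by a vertical integration by parts so that no third vertical derivative of $U$ survives, and then close the whole system by Young's inequality against the $\Theta'$-weighted norms. It is this last point that dictates how large the parameter $\mu$ must be chosen, so that all the $I_{j,q}$ can be absorbed into the left-hand side of \eqref{UNICITY} and the uniqueness follows by summation over $q$ and Gronwall's argument.
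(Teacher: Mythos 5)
Your proposal is correct and follows essentially the same route as the paper: the paper likewise reduces each $I_{j,q}$ to the paraproduct/analytic-phase machinery of Lemma \ref{lem:nonlinear} (invoked there in the form of Lemmas 3.1--3.2 and Estimates (5.11), (5.13), (5.14) of \cite{PZZ2020}), after precisely the vertical integrations by parts you describe to remove the excess factors $\partial_z^2 U$, $\partial_z^3 U$ and $\partial_z^3 u_2$ from the pairings. Your split between the terms that close cleanly against $\|e^{Rt'}\partial_z U_\Phi\|^2_{\widetilde{L}^2_{T,\Theta'(t)}(B^{s+\frac{1}{2}})}$ ($I_{1,q}$--$I_{4,q}$) and those that must leave an unweighted $\widetilde{L}^2_T$ norm compensated by an interpolation factor $\|\cdot\|_{L^\infty_T(B^{3/2})}^{1/2}$ ($I_{5,q}$--$I_{8,q}$) is exactly the structure of the paper's argument.
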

Now, using Lemma \ref{lem:estim-Iq} and choosing $\mu=C^2\,M^2$, we deduce from \eqref{UNICITY} that, for any $s\in]0,1[$
\begin{multline*}
		\|e^{Rt} U_\Phi \|_{\widetilde{L}_T^{\infty}(B^s)}
		+ \|e^{Rt}  \partial_z U_\Phi\|_{\widetilde{L}_T^{\infty}(B^s)}
		+\|e^{Rt}  \partial_z U_\Phi\|_{\widetilde{L}_T^{2}(B^s)}
		+\|e^{Rt}  \partial^2_z U_\Phi\|_{\widetilde{L}_T^{2}(B^s)}
		\\
		\leq C(\|e^{a|D_x|}U_0
		\|_{B^s}+\,  \|e^{a|D_x|} \partial_z U_0\|_{B^s}) = 0,
\end{multline*}
which implies the uniqueness of the solution.

\subsection{Construction of approximate solutions}
Before introducing the construction methods, we remark that if $(u,v,p)$ is a solution of \eqref{ANSG_Strip} 
such that all the following expressions make sense, then we have : 
\begin{lemma}\label{lem=energy}
	\begin{equation}\label{eq dtu2}
		\frac{1}{2} \frac{d}{dt} \left(\|u\|^2_{L^2}+\alpha_1^2 \|\partial_z u\|^2_{L^2} \right) + \|\partial_z u\|^2_{L^2}+\alpha_1^2 \|\partial^2_z u\|^2_{L^2}=0.
	\end{equation}
\end{lemma}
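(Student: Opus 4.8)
The plan is to take the $L^2(\mathcal{S})$ inner product of the first (momentum) equation of \eqref{ANSG_Strip} with $u$ and to show that the pressure term and all convective terms vanish, leaving exactly the dissipation terms. Throughout I would use that $\omega = u - \alpha_1^2\partial_z^2 u$ and that on $z=0,1$ we have \emph{both} $u=0$ and $\partial_z u = 0$, so every boundary term generated by integration by parts in $z$ disappears; decay as $|x|\to\infty$ kills the boundary terms of integrations by parts in $x$.

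First I would dispose of the two linear terms. For the time derivative, writing $\psca{\partial_t\omega,u}=\psca{\partial_t u,u}-\alpha_1^2\psca{\partial_t\partial_z^2 u,u}$ and integrating by parts once in $z$ (the boundary term vanishes since $u=0$ on $z=0,1$) produces $\tfrac12\tfrac{d}{dt}(\|u\|_{L^2}^2+\alpha_1^2\|\partial_z u\|_{L^2}^2)$. For the viscous term, $\psca{\partial_z^2\omega,u}=\psca{\partial_z^2 u,u}-\alpha_1^2\psca{\partial_z^4 u,u}$; integrating by parts once (resp.\ twice) in $z$ and using $u=0$ and $\partial_z u=0$ on the boundary yields $-\|\partial_z u\|_{L^2}^2-\alpha_1^2\|\partial_z^2 u\|_{L^2}^2$, which are precisely the dissipation terms of \eqref{eq dtu2}. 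The pressure term vanishes because $\partial_z p=0$ forces $p=p(t,x)$, whence $\psca{\partial_x p,u}=\int_{\mathbb{R}}\partial_x p\,(\int_0^1 u\,dz)\,dx=0$ by the vertical-average identity \eqref{verticalaverage}.

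The heart of the proof is the cancellation of the convective terms $\psca{u\partial_x\omega+v\partial_z\omega+\alpha_1^2(\partial_z u\,\partial_x\partial_z u-\partial_z^2 u\,\partial_x u),u}$. Splitting $\omega$, the plain transport part $\psca{u\partial_x u+v\partial_z u,u}$ vanishes on rewriting it as $\tfrac12\int_{\mathcal S}(u\partial_x+v\partial_z)(u^2)$ and using incompressibility $\partial_x u+\partial_z v=0$ together with the boundary and decay conditions. For the four remaining $\alpha_1^2$-terms I would reduce each, by integration by parts in $z$ and $x$ and the relation $\partial_z v=-\partial_x u$, to a multiple of the single integral $I:=\int_{\mathcal S}u\,\partial_z u\,\partial_x\partial_z u$. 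A direct computation gives $-\psca{u\partial_x\partial_z^2 u,u}=2I$, $-\psca{v\partial_z^3 u,u}=-2I$, $\psca{\partial_z u\,\partial_x\partial_z u,u}=I$ and $-\psca{\partial_z^2 u\,\partial_x u,u}=-I$, so that their sum is $(2-2+1-1)\,\alpha_1^2 I=0$.

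Collecting the four contributions gives exactly \eqref{eq dtu2}. I expect the last step to be the main obstacle: the careful bookkeeping required so that all four $\alpha_1^2$-convective terms collapse onto the same integral $I$. The crucial structural inputs are that the combination $u=\partial_z u=0$ on $z=0,1$ annihilates the higher-order boundary terms, and that substituting $\partial_z v=-\partial_x u$ converts the $v$-dependent term $-\psca{v\partial_z^3 u,u}$ into the common form; without these the cancellation would not close.
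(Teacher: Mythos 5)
Your proposal is correct and follows essentially the same route as the paper's proof: an $L^2$ energy estimate obtained by pairing the momentum equation with $u$, the linear terms handled by integration by parts in $z$ using $u=\partial_z u=0$ on $z=0,1$, the pressure term annihilated via $\partial_z p=0$ and incompressibility, and the convective terms cancelled through integrations by parts in $x$ and $z$ together with $\partial_x u+\partial_z v=0$. The only cosmetic differences are that you justify $\psca{\partial_x p,u}=0$ through the vertical-average identity \eqref{verticalaverage} instead of the paper's chain $\psca{\partial_x p,u}=-\psca{p,\partial_x u}=\psca{p,\partial_z v}=-\psca{\partial_z p,v}=0$, and that you normalize the four $\alpha_1^2$-convective contributions to multiples of the single integral $I=\int_{\mathcal S}u\,\partial_z u\,\partial_x\partial_z u$ with coefficients $2-2+1-1=0$, whereas the paper groups the same integrations by parts so that incompressibility directly kills the combinations $(\partial_x u+\partial_z v)\,u^2$ and $(\partial_x u+\partial_z v)\,(\partial_z u)^2$ — identical manipulations, differently bookkept.
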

We also remark that Estimate \eqref{eq dtu2} is also true for the approximate solutions.
\begin{proof}
We set $w = u - \alpha_1^2 \partial_z^2 u$. 
Taking $L^2$ scalar product of \eqref{ANSG_Strip} with $u$, we have
\begin{align*}  %\label{eq wt u}
  \psca{ \partial_t \omega, u } 
+ \psca{u\,\partial_{x}\omega, u} + \psca{v\partial_z\omega,u}
+ \alpha_1^2 \psca{\partial_z u\,\partial_x \partial_z u,u} - \alpha_1^2 \psca {\partial^2_z u\partial_x u,u}
=\psca{\partial^2_z \omega,u}-\psca{\partial_{x}p,u}.
\end{align*}
We first consider the linear terms. Performing integration by parts in $z$-variable and remarking that 
\begin{align*}
\psca {\partial_x p,u }&=-\psca {p, \partial_x u} 
= \psca{p, \partial_z v} 
=-\psca {\partial_z p, v} 
=0.
\end{align*}
We obtain
\begin{equation*} %\label{eq dtu2-a}
\frac{1}{2} \frac{d}{dt} \left(\|u\|^2_{L^2}+\alpha_1^2 \|\partial_z u\|^2_{L^2} \right)
  +\|\partial_z u\|^2_{L^2}+\alpha_1^2 \|\partial^2_z u\|^2_{L^2} + \text{NL} = 0 , 
\end{equation*}
where the non-linear term NL is
\begin{equation*}
	\text{NL} = \psca{u\,\partial_{x}\omega, u} + \psca{v\partial_z\omega,u} + \alpha_1^2 \psca{\partial_z u\,\partial_x \partial_z u,u} - \alpha_1^2 \psca {\partial^2_z u\partial_x u,u}.
\end{equation*}
Performing integration by parts in $x$ or $z$ variable and using the incompressibility condition, we can write 
\begin{align*}
 	\psca {u\,\partial_{x}\omega, u } = 
 	-\frac{1}{2}  \int_{\mathbb{R} \times ]0,1[} (\partial_x u) \, u^2 + 2\alpha_1^2\int_{\mathbb{R} \times ]0,1[} (\partial_z^2 u) (\partial_{x}u) \, u , 
\end{align*}
\begin{align*}
 	\psca{v\,\partial_{z}\omega, u }&= \frac{1}{2}  \int_{\mathbb{R} \times ]0,1[} v \partial_z(u^2) - \alpha_1^2\int_{\mathbb{R} \times ]0,1[} \partial_{z} (\partial_z^2 u) (uv) \\
	& =- \frac{1}{2}  \int_{\mathbb{R} \times ]0,1[} (\partial_z v) \, u^2 - \frac{1}{2}\alpha_1^2\int_{\mathbb{R} \times ]0,1[} (\partial_{z}u)^2 \,(\partial_z v) - \alpha_1^2\int_{\mathbb{R} \times ]0,1[}(\partial_z^2 u) (\partial_{x}u)\,u 
\end{align*}
and
\begin{align*}
 	\alpha_1^2 \psca {\partial_z u\,\partial_x \partial_z u,u} = -\frac{1}{2} \alpha_1^2 \int_{\mathbb{R} \times ]0,1[} (\partial_z u)^2  \partial_x u.
\end{align*}
Putting these identities into NL, we get 
\begin{align*}
	\text{NL} = 0
\end{align*}
and this concludes the proof of the lemma. 
\end{proof}

\medskip

We now introduce the following Hilbert spaces
\begin{gather*}
	H^2_0=\left\{ f \in H^2(]0,1[) \,|\, f(0)=f(1)= f'(0)=f'(1)=0\right\},\\
	H^1_0 =\left\{ f \in H^1(]0,1[) \,|\, f(0)=f(1)=0 \right\}
\end{gather*}
equipped with their respective norms
\begin{gather*}
	\norm{f}_{H^2_0} = \left( \alpha_1^2 \int_0^1 (f''(z))^2 dz + \int_0^1 (f'(z))^2 dz   \right)^{\frac12},
	\\
	\norm{f}_{H^1_0} = \left( \alpha_1^2 \int_0^1 (f'(z))^2 dz + \int_0^1 (f(z))^2 dz \right)^{\frac12} 
\end{gather*}
and let be a common Hilbert basis $\{\tilde{e}_k\}_{k\ge 1}$ such that 
\begin{equation*}
	\forall v \in H^2_0, \quad \langle \tilde{e}_k, v  \rangle_{H^2_0} = \lambda_k \, \langle \tilde{e}_k, v  \rangle_{H^1_0}.
\end{equation*}
For any $(u,v,p)$ sufficiently smooth on $[0,T] \times \mathcal{S}$, $T > 0$, such that
\begin{align*}
	&\partial_{x} p = \alpha_1^2 \partial^3_z u|_{z=0} 
	- \alpha_1^2 \partial^3_z u|_{z=1}
	- \partial_x \int_0^1 (u)^2(t,x,y)\,dz
	-\alpha_1^2 \partial_x \int_0^1 (\partial_z u)^2(t,x,y)\,dz , 
	\\
	&v(t,x,z)  = - \int_0^z \partial_x u(t,x,\tilde{z}) d\tilde{z} . 
\end{align*}
We set
\begin{align}
	\label{eq:defR} R(u) = - (u\,\partial_x u) + \alpha_1^2 (u\,\partial_x \partial_z^2 u) - (v\,\partial_z u) + \alpha_1^2 (v\,\partial_z \partial_z^2 u) - \alpha_1^2\, (\partial_z  u\,\partial_x \partial_z u) + \alpha_1^2 (\partial^2_z u\partial_x u) - \partial_x p.
\end{align} 
% 
%¨
For any $n \in \mathbb{N}$, we will look for an approximate solution $u^n$ of \eqref{ANSG_Strip} of the form
\begin{equation*}
	u^n (t,x,z) = \sum _{i=1}^n \tilde{u}_{in}(t,x) \, \tilde{e}_i(z) , 
\end{equation*}
where $\set{\tilde{u}_{in}}$ is solution of the following system of $n$ equations 
\begin{equation*}
	\left\{
	\begin{aligned}
		& \langle \partial_{t} (u^n -\alpha_1^2\partial_z^2 u^n) ,  \tilde{e}_k \rangle_{L^2_z} 
		 = \langle  \partial^2_z  (u^n-\alpha_1^2\partial_z^2 u^n)  , \tilde{e}_k \rangle_{L^2_z}
		  + \langle R(u^n) , \tilde{e}_k \rangle_{L^2_z}, \quad k=1,\ldots,n  
		\\
		& u_n{_{|t=0}}  =  \sum_{k=1}^n \langle u_0, \tilde{e}_k \rangle_{H^1_0} \, \tilde{e}_k , 
	\end{aligned}  
	\right.
\end{equation*}
which also means that, for $1\leq k \leq n$,
\begin{align*}
	\partial_{t} \tilde{u}_{kn}(t,x) & = -\lambda_k \, \tilde{u}_{kn}(t,x) + \langle R(u^n)(t,x,\cdot) , \tilde{e}_k \rangle_{L^2_z}.
\end{align*} 
Next, we define the following frequency cut-off operators $J_n$ in $x$-variable. For $f\in L^2(\mathbb{R})$, we set 
\begin{equation*}
	J_n f (x)=  \mathcal{F}^{-1}_h \left(\displaystyle{1\!\!1_{[-n,n]}} \mathcal{F}_h f \right)(x).
\end{equation*}
The operator $J_n$ is continuous from $L^2$ to $L^2$, satisfies $J_n^2 f = J_n f$ and for any positive integers $n$ and $p$, we have
\begin{align*}
	&|\partial_x^p J_n f|_{L^2(\mathbb{R})} \leq n^{p} \, |f|_{L^2(\mathbb{R})}, 
	\\
 	&|\partial_x^p J_n f|_{L^\infty(\mathbb{R})} \leq c_{np} |f|_{L^2(\mathbb{R})}.
\end{align*}  
Now, we consider the following approximate system 
\begin{equation}\label{Approx_system_Strip}
	\left\{
	\begin{aligned}
		& \partial_{t} \tilde{u}_{kn}(t,x) = -\lambda_k \, \tilde{u}_{kn}(t,x) + \langle J_n R(J_n u^n)(t,x,\cdot) , \tilde{e}_k \rangle_{L^2_z}, \quad k \in \set{1,\ldots,n}
		\\
		& \tilde{u}_{kn}{_{|t=0}}  =  \langle J_n u_0, \tilde{e}_k \rangle_{H^1_0} .
	\end{aligned}
\right.
\end{equation}
Since $\tilde{u}_{kn} \mapsto  \langle J_n R(J_n u^n)(t,x,\cdot) , \tilde{e}_k \rangle_{L^2_z}$ is locally Lipschitz in $L^2$, \eqref{Approx_system_Strip} is a system of ordinary differential equations in $L^2$. Then, Picard’s theorem implies the existence of a unique maximal solution $\set{\tilde{u}_{kn}(t,x)}$ of \eqref{Approx_system_Strip} on $ [0, T_n[$. Since $J_n^2 = J_n$, we deduce that $\set{J_n \tilde{u}_{kn}}$ is also a solution of \eqref{Approx_system_Strip} on $ [0, T_n[$. Thus, the uniqueness of the solution implies that $\tilde{u}_{kn} = J_n \tilde{u}_{kn}$, for any $k \in \set{1,\ldots,n}$. 
We remark that $\set{\tilde{e}_k}$ is not necessarily orthogonal in $L^2_z(]0,1[)$ 
but using the Gram-Schmidt process, we can always construct a orthogonal family $\set{e_k}$ of $L^2_z(]0,1)[$ such that
$$
\text{Vect}\left\{e_1, \ldots, e_n\right\} = \text{Vect} \left\{\tilde{e}_1, \ldots, \tilde{e}_n\right\}.
$$ 
We denote $\mathbb{P}_n$ the orthogonal projection in $L^2_z(]0,1)[$ onto $\text{Vect}\left\{e_1, \ldots, e_n\right\}$
\begin{equation*}
\mathbb{P}_n \, f = \sum _{k=1}^n \langle f, e_k \rangle_{L^2_z} \, e_k.
\end{equation*}
Then, we remark that $u^n$ is also the solution of the system 
\begin{equation}\label{Approx_system_Strip_un}
	\left\{
	\begin{aligned}
		& \partial_{t} (u^n -\alpha_1^2\partial_z^2 u^n) 
		 =  \partial^2_z  (u^n-\alpha_1^2\partial_z^2 u^n)  
		  + \mathbb{P}_n  R(u^n) 
		\\
		& u_n{_{|t=0}}  = \mathbb{P}_n  u_0.
	\end{aligned}
	\right.
\end{equation}
Same calculations as in the proof of Lemma \ref{lem=energy} and the fact that $\mathbb{P}_n$ the orthogonal projection in $L^2_z(]0,1)[$ onto $\text{Vect}\left\{e_1, \ldots, e_n\right\}$ imply that
\begin{equation*}
	\frac{1}{2} \frac{d}{dt} \left(\|u^n\|^2_{L^2} + \alpha_1^2 \|\partial_z u^n\|^2_{L^2} \right) + \|\partial_z u^n\|^2_{L^2}+\alpha_1^2 \|\partial^2_z u^n\|^2_{L^2}=0, 
\end{equation*}
for any $t\in [0, T_n[$, which means $T_n=+\infty$.

\begin{remark}
	Estimates \eqref{Estimate B3/2} and \eqref{time_estimate_Bs} also apply to the approximate solutions $u^n$.
\end{remark}

\subsection{Passage to the limit} Now, we want to take the limit of the sequence of approximate solutions \eqref{Approx_system_Strip_un}. 
We already see that $\set{u^n}$ is bounded in $L^{\infty}(\mathbb{R}_+, L^2_{loc})$ and due to \eqref{time_estimate_Bs}, we remark that $\set{\partial_t u^n}$ is bounded in $L^{2}(\mathbb{R}_+, H^{-1}_{loc})$. 
Applying Aubin-Lions lemma, we deduce the existence of a subsequence, always noted by $\set{u^n}$ for the sake of simplicity, such that $u^n \longrightarrow u$ in $L^{\infty}_{loc}(\mathbb{R}_+, H^{-1}_{loc})$. By interpolation, we obtain
\begin{equation}
	\label{eq:unlim} u^n  \longrightarrow  u \qquad  in \qquad  L^{\infty}_{loc}(\mathbb{R}_+, H^{-\delta}_{loc}),\quad 
	  \text{for all} \quad 0 <\delta < 1.
\end{equation}
We recall that $u^n$ is solution of the following system 
\begin{equation*}%\label{Approx_system_Strip_un_02}
	\left\{
	\begin{aligned}
		& \langle \partial_{t} (u^n -\alpha_1^2\partial_z^2 u^n) ,  e_k \rangle_{L_z^2} = \langle  \partial^2_z  (u^n-\alpha_1^2\partial_z^2 u^n)  , e_k \rangle_{L_z^2} + \langle R(u^n) , e_k \rangle_{L_z^2}, \quad k=1,\ldots,n
		\\
		& u_n{_{|t=0}}  =  \sum_{k=1}^n \langle u_0, e_k \rangle_{L_z^2} \, e_k ,
	\end{aligned}
	\right.
\end{equation*}
where $R$ is defined  in \eqref{eq:defR}. So the main point of this paragraph is to prove the following lemma on the convergence of the nonlinear term $R(u^n)$. 
\begin{lemma} \label{Cv_NL}
We have the following convergence as $n\to\infty$ 
	\begin{equation*}
		R(u^n)\longrightarrow R(u) \quad \text{in} \quad L^2_{loc}(H^{-4}_{loc}).
	\end{equation*}
\end{lemma}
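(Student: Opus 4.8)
The plan is to pass to the limit separately in each of the finitely many terms of $R(u^n)$, taking full advantage of the fact that the target space $L^2_{loc}(H^{-4}_{loc})$ is very weak: in the duality pairing against a smooth compactly supported test function we may transfer up to four spatial derivatives onto the test function. This is precisely what will make the highest-order nonlinear terms manageable.

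First I would record the uniform bounds. Since Estimates \eqref{Estimate B3/2} and \eqref{time_estimate_Bs} hold for the approximate solutions, the families $\set{u^n}$, $\set{\partial_z u^n}$ and $\set{\partial_z^2 u^n}$ are bounded in $L^2_{loc}(\mathbb{R}^+; H^m_x(L^2_z))$ for arbitrarily large horizontal regularity $m$ (the analytic norm $B^{\frac32}$ dominates any fixed number of horizontal derivatives on a bounded $x$-patch), and $v^n=-\int_0^z \partial_x u^n\, d\tilde z$ is bounded accordingly. Interpolating these bounds against the strong convergence \eqref{eq:unlim} through $\norm{f}_{H^\sigma}\lesssim \norm{f}_{H^{-\delta}}^\theta\,\norm{f}_{H^m}^{1-\theta}$, I would upgrade to strong convergence of $u^n$, $\partial_z u^n$, $\partial_z^2 u^n$ and $v^n$ in $L^2_{loc}(\mathbb{R}^+; H^\sigma_{loc})$ for every $\sigma$ strictly below the available regularity (the $\partial_t$-bounds needed to run Aubin--Lions on the derivatives come from \eqref{time_estimate_Bs}).

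Next I would reduce every term of \eqref{eq:defR} to a bilinear expression in the controlled quantities, pushing excess derivatives onto the test function. For the top-order contributions I would integrate by parts in the pairing, e.g. $u^n\,\partial_x\partial_z^2 u^n=\partial_x(u^n\,\partial_z^2 u^n)-(\partial_x u^n)(\partial_z^2 u^n)$, and for the $v$-terms I would use incompressibility $\partial_z v^n=-\partial_x u^n$ to write $v^n\,\partial_z\partial_z^2 u^n=\partial_z(v^n\,\partial_z^2 u^n)+(\partial_x u^n)(\partial_z^2 u^n)$. In this way each contribution becomes a spatial derivative of order at most two of a product $a^n b^n$, where $a^n,b^n$ range over $\set{u^n,\partial_x u^n,\partial_z u^n,\partial_z^2 u^n,v^n}$. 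For each such product I would write $a^n b^n-ab=(a^n-a)b^n+a(b^n-b)$ and conclude by a Sobolev product estimate into a negative-index space: the strong convergence of one factor together with the uniform bound on the other forces $a^n b^n\to ab$ in $L^2_{loc}(H^{-N}_{loc})$ for some finite $N$, which embeds into $H^{-4}_{loc}$ once the remaining derivatives are absorbed by the test function.

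Finally I would treat the pressure term via its explicit expression. The quadratic contributions $\partial_x\int_0^1 (u^n)^2\,dz$ and $\alpha_1^2\,\partial_x\int_0^1 (\partial_z u^n)^2\,dz$ converge by the product argument above. The delicate pieces are the boundary traces $\alpha_1^2\,\partial_z^3 u^n|_{z=0}$ and $\alpha_1^2\,\partial_z^3 u^n|_{z=1}$, which are linear in $u^n$ but involve a third vertical derivative that the a priori estimates do not control directly; I expect this to be the main obstacle. I would resolve it by recovering the missing vertical regularity from the equation: the relation $\omega=u-\alpha_1^2\partial_z^2 u$ is elliptic in $z$, so $u$ gains two vertical derivatives over $\omega$, and reading $\partial_z^2\omega$ off the momentum equation satisfied by $u^n$ (whose remaining terms are already controlled or convergent) supplies a continuous, uniformly bounded trace $\partial_z^3 u^n|_{z=0,1}$. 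Since these traces are linear, their weak convergence — which follows from the uniform bound together with the established convergence of $u^n$ — suffices. Summing the finitely many limits then gives $R(u^n)\to R(u)$ in $L^2_{loc}(H^{-4}_{loc})$.
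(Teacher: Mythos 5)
Your handling of the bilinear terms is, at its core, exactly the paper's proof: the splitting $a^nb^n-ab=(a^n-a)b^n+a(b^n-b)$, uniform bounds coming from \eqref{Estimate B3/2}, the Aubin--Lions convergence \eqref{eq:unlim}, and a Sobolev product law in two dimensions. The paper carries this out only for $u^n\partial_x u^n$ (using the $L^2_{loc}(H^{1/2}_{loc})$ bound on $\partial_x u^n$ and $\delta<\tfrac12$) and declares the remaining terms of \eqref{eq:defR} ``similar''. Your additional machinery --- exploiting that the analyticity band stays bounded below so that arbitrarily many horizontal derivatives are uniformly controlled, interpolating \eqref{eq:unlim} against these bounds to get strong convergence in $H^\sigma_{loc}$ below the available regularity, integrating by parts to push derivatives onto test functions, and using $\partial_z v^n=-\partial_x u^n$ for the $v$-terms --- is sound, and is in fact what one needs to make ``similar'' honest for the third-order terms such as $u^n\partial_x\partial_z^2u^n$ and $v^n\partial_z^3u^n$, since $\partial_z^2u^n$ carries no spare vertical regularity. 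On this part your proposal is correct and, if anything, more complete than the paper's.

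The genuine gap is precisely where you flagged it: the boundary traces $\alpha_1^2\,\partial_z^3u^n|_{z=0,1}$ in $\partial_x p^n$. Your proposed recovery of $\partial_z^2\omega^n$ ``from the momentum equation whose remaining terms are already controlled'' does not close. First, it is circular: the approximate equation \eqref{Approx_system_Strip_un} gives $\partial_z^2\omega^n=\partial_t\omega^n-\mathbb{P}_n R(u^n)$, and $R(u^n)$ contains $-\partial_x p^n$, i.e.\ exactly the traces you are trying to bound. Second, $\partial_t\omega^n=\partial_t u^n-\alpha_1^2\,\partial_t\partial_z^2u^n$ requires a uniform bound on $\partial_t\partial_z^2u^n$, which neither \eqref{Estimate B3/2} nor \eqref{time_estimate_Bs} provides (the latter controls only $\partial_t u$ and $\partial_t\partial_z u$). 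Third, even granting a uniform bound, weak convergence of the traces does not follow from the established convergence of $u^n$: the trace map at $z=0,1$ is not continuous in the $H^{-\delta}$-type topologies in which $u^n$ converges, so the weak limit of the traces would still have to be identified by a separate argument. In fairness, the paper's own proof is silent on the pressure term altogether, so your attempt stalls exactly where the published argument is also incomplete; but as written, this step of your proof would not go through.
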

\begin{proof} [Proof of Lemma \ref{Cv_NL}] We will only prove the convergence of the term $u^n\partial_x u^n$. The other terms of $R(u^n)$ can be treated in a similar way. We first write
\begin{equation*}
	u^n\partial_x u^n-u\partial_x u = (u^n-u)\partial_x u^n + u \partial_x(u^n-u).
\end{equation*}
From Estimates \eqref{Estimate B3/2} and \eqref{eq:unlim}, we have $\set{\partial_x u^n}$ 
is uniformly bounded in $L^2_{loc}(H^{\frac{1}{2}}_{loc})$. 
Choosing $\delta < \frac{1}2$ in \eqref{eq:unlim} and using the product law in Sobolev spaces on $\mathbb{R}^2$, we get
\begin{align*}
	\|(u^n-u)\partial_x u^n\|_{ L^2_{loc}(H^{-\frac{1}2-\delta}_{loc})} \leq \|u^n-u\|_{ L^\infty_{loc}(H^{-\delta}_{loc})} \; \|\partial_x u^n\|_{ L^2_{loc}(H^{\frac{1}{2}}_{loc})} \longrightarrow 0.
\end{align*}
Now, using again Estimate \eqref{Estimate B3/2}, we have that $u$ is bounded in $L^2_{loc}(H^\frac12_{loc})$. 
Then, \eqref{eq:unlim} and the product law in Sobolev spaces on $\mathbb{R}^2$ yield 
\begin{align*}
	\|u\partial_x (u^n-u)\|_{ L^2_{loc}(H^{-\frac12 - \delta}_{loc})} 
	\leq \|u\|_{ L^2_{loc}(H^\frac12_{loc})} \; \|\partial_x (u^n-u)\|_{ L^\infty_{loc}(H^{-\delta}_{loc})} \longrightarrow 0.
\end{align*}
\end{proof}

Now, coming back to \eqref{Approx_system_Strip_un}, Lemma \ref{Cv_NL} proves that the limit $u$ is solution of \eqref{ANSG_Strip} in the sense of distributions. This concludes the proof or Theorem \ref{mainth}.

%\newpage

\section{Appendix - proof of the Lemma \ref{lem:nonlinear}}\label{app lem:nonlinear}

{\color{\LEO}%---------- COLOR LEO ----------

We first introduce some notations and classical mathematical tools. 
Then we will prove the estimations of Lemma \ref{lem:nonlinear}.
We will only detail the proof of estimates \eqref{estimate_1} and \eqref{estimate_5}.
The same procedure can be followed to prove the other estimates.

We recall the following Bernstein-type lemma, which states that derivatives act almost as multipliers on distributions 
whose Fourier transforms are supported in a ball or an annulus. 
We refer the reader to \cite{BCD2011} for a proof of this lemma.
\begin{lemma}
	\label{lem bernstein}
	Let $k\in\mathbb{N}$, $d \in \mathbb{N}^*$ and $r_1, r_2 \in \mathbb{R}$ satisfy $0 < r_1 < r_2$. 
	There exists a constant $C > 0$ such that, for any $a, b \in \mathbb{R}$, $1 \leq a \leq b \leq +\infty$, 
	for any $\lambda > 0$ and for any $u \in L^a(\mathbb{R}^d)$, we have
	\begin{equation*} %\label{eq:Bernstein1}
		\text{supp}\,\pare{\widehat{u}} \subset \set{\xi \in \mathbb{R}^d \;\vert\; \abs{\xi} \leq r_1\lambda} \quad 
		\Longrightarrow \quad 
		\sup_{\abs{\alpha} = k} \norm{\partial^\alpha u}_{L^b} \leq C^k\lambda^{k+ d \pare{\frac{1}a-\frac{1}b}} \norm{u}_{L^a} 
	\end{equation*}
	and
	\begin{equation*} %\label{eq:Bernstein2}
		\text{supp}\,\pare{\widehat{u}} \subset \set{\xi \in \mathbb{R}^d \;\vert\; r_1\lambda \leq \abs{\xi} \leq r_2\lambda} \quad 
		\Longrightarrow \quad 
		C^{-k} \lambda^k\norm{u}_{L^a} \leq \sup_{\abs{\alpha} = k} \norm{\partial^\alpha u}_{L^a} \leq C^k \lambda^k\norm{u}_{L^a}.
	\end{equation*}
\end{lemma}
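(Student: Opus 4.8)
The plan is to realize $u$ and its derivatives as convolutions of $u$ with a fixed, rescaled kernel, and then to apply Young's convolution inequality, so that the powers of $\lambda$ fall out purely from homogeneity. First I would fix two radial functions in $C_0^\infty(\mathbb{R}^d)$: a function $\chi$ equal to $1$ on $\set{\abs{\xi}\le r_1}$ and supported in $\set{\abs{\xi}\le 2r_1}$, and a function $\psi$ equal to $1$ on the annulus $\set{r_1\le\abs{\xi}\le r_2}$ and supported in a slightly larger annulus that stays away from the origin. Setting $h=\mathcal{F}^{-1}\chi$ and $g=\mathcal{F}^{-1}\psi$, both are Schwartz functions. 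Since $\chi(\cdot/\lambda)$ equals $1$ on $\set{\abs{\xi}\le r_1\lambda}$ and $\psi(\cdot/\lambda)$ equals $1$ on $\set{r_1\lambda\le\abs{\xi}\le r_2\lambda}$, these multipliers act as the identity on $\widehat{u}$ in the two respective regimes.

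For the first estimate, the support condition gives $\widehat{u}=\chi(\cdot/\lambda)\widehat{u}$, that is $u=\lambda^d h(\lambda\,\cdot)*u$, whence for $\abs{\alpha}=k$
\[ \partial^\alpha u=\lambda^{d+k}(\partial^\alpha h)(\lambda\,\cdot)*u. \]
Young's inequality with $1+\tfrac1b=\tfrac1c+\tfrac1a$ then yields
\[ \norm{\partial^\alpha u}_{L^b}\le\lambda^{d+k}\norm{(\partial^\alpha h)(\lambda\,\cdot)}_{L^c}\norm{u}_{L^a}=\lambda^{k+d(\frac1a-\frac1b)}\norm{\partial^\alpha h}_{L^c}\norm{u}_{L^a}, \]
the exponent simplifying exactly because $\tfrac1c=1-\tfrac1a+\tfrac1b$ forces $d+k-\tfrac dc=k+d(\tfrac1a-\tfrac1b)$. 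This is the claimed bound as soon as $\sup_{\abs{\alpha}=k}\norm{\partial^\alpha h}_{L^c}\le C^k$.

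For the second estimate the upper bound is the same computation with $a=b$, hence $c=1$, after replacing $\chi,h$ by $\psi,g$. For the lower bound I would invert the derivatives via the multinomial identity $\abs{\xi}^{2k}=\sum_{\abs{\alpha}=k}\binom{k}{\alpha}\xi^{2\alpha}$: on the support of $\widehat{u}$ one has $\abs{\xi}\ge r_1\lambda>0$, so division by $\abs{\xi}^{2k}$ is harmless, and using $\xi^\alpha\widehat{u}=i^{-k}\widehat{\partial^\alpha u}$ together with $\psi(\cdot/\lambda)\widehat{u}=\widehat{u}$ gives
\[ u=\lambda^{-k}\sum_{\abs{\alpha}=k}\binom{k}{\alpha}i^{-k}\,\lambda^d\check{m}_\alpha(\lambda\,\cdot)*\partial^\alpha u,\qquad m_\alpha(\xi)=\frac{\xi^\alpha\psi(\xi)}{\abs{\xi}^{2k}}, \]
where each $m_\alpha\in C_0^\infty(\mathbb{R}^d)$ since $\psi$ is supported away from the origin. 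Young's inequality with the scale-invariant $L^1$ kernels then produces $\norm{u}_{L^a}\le\lambda^{-k}\big(\sum_{\abs{\alpha}=k}\binom{k}{\alpha}\norm{\check{m}_\alpha}_{L^1}\big)\sup_{\abs{\alpha}=k}\norm{\partial^\alpha u}_{L^a}$, which is the lower bound.

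The scaling in $\lambda$ is forced by homogeneity and is the easy part; the genuine obstacle is to obtain the geometric constant $C^k$ uniformly in $k$, rather than a constant $C(k)$ that degenerates as $k\to\infty$. The difficulty is that both $\partial^\alpha h$ and the symbols $m_\alpha$ depend on $k$, through the order of differentiation and through the factor $\abs{\xi}^{-2k}$. I would control $\norm{\partial^\alpha h}_{L^c}$ by writing $\partial^\alpha h=\mathcal{F}^{-1}[(i\xi)^\alpha\chi]$, noting that this symbol is supported in the fixed ball $\set{\abs{\xi}\le 2r_1}$ where $\abs{\xi^\alpha}\le(2r_1)^k$, and converting this into $L^c$ bounds on the kernel through the rapid decay of the Schwartz function (estimating $(1+\abs{x}^2)^N\partial^\alpha h$ via at most $2N$ frequency derivatives, which contribute only a polynomial factor in $k$); the combinatorial sum $\sum_{\abs{\alpha}=k}\binom{k}{\alpha}=d^k$ is likewise absorbed into $C^k$. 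An alternative is to iterate the $k=1$ case. This uniform bookkeeping in $k$ is exactly what upgrades the one-line scaling heuristic to the sharp statement, and is where I expect the bulk of the work.
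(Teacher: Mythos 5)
Your proposal is correct, and it coincides with the classical argument: the paper itself does not prove Lemma \ref{lem bernstein} but simply refers to \cite{BCD2011}, and the proof given there is exactly your scheme — write $u$ as a convolution with a rescaled kernel whose Fourier transform is a cutoff equal to $1$ on the ball (resp.\ annulus), apply Young's inequality so the $\lambda$-powers come from scaling, and for the lower bound invert the derivatives via $\abs{\xi}^{2k}=\sum_{\abs{\alpha}=k}\binom{k}{\alpha}\xi^{2\alpha}$ divided by $\abs{\xi}^{2k}$ on the annulus. Your attention to the uniformity of the constant in $k$ (polynomial factors from differentiating the symbols and the $d^k$ combinatorial sum being absorbed into $C^k$) is precisely the bookkeeping that makes the sharp $C^k$ statement legitimate, so there is no gap.
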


In order to prove the estimates of Lemma \ref{lem:nonlinear}, we need the Bony decomposition 
of a product of two functions $a$ and $b$ in the horizontal direction (see \cite{BCD2011})
\begin{equation}\label{Bony decomp}
	ab = T_a^h \, b + T_b^h \, a + R^h(a,b) , 
\end{equation}
%¨
where
\begin{align*}
T^h_a b &= \sum_{q \in \mathbb{Z}} S_{q-1}^h a\Delta^h_q b
\quad\text{and}\quad
R^h(a,b) 
  = \sum_{|q-q'|\leq 1} \Delta^h_q a \Delta_{q'}^h b
  = \sum_{q\in\mathbb{Z}} \widetilde{\Delta}^h_{q} a \, \Delta_{q}^h b
\end{align*}
and
\begin{equation*}
\widetilde{\Delta}^h_{q} f 
  =  \sum_{|q-q'|\leq 1} \Delta^h_{q'} f. 
\end{equation*}
From the support properties to the Fourier transform of the terms $\Delta^h_q f$, 
we can verify %(see p 190, 211 Paicu Rev. Mat. Iberoamericana 21 (2005), no. 1, 179–235 Équation anisotrope de Navier-Stokes dans des espaces critiques
\begin{align}\label{DhSh=0}
&\Delta^h_q(S^h_{q'-1} a\, \Delta^h_{q'} b)=0 \quad if \quad |q'-q| \geq 5 \quad\text{and}\quad
 \Delta^h_q(S^h_{q'+2} a\, \Delta^h_{q'} b)=0 \quad if \quad q' \leq q- 4.
\end{align}
Then, for a function $f$ (we suppose that all the expressions below make sense), we write  
\begin{align}	
	\label{int abf} \int_0^T |\langle e^{Rt'} \Delta^h_q(a\, b)_\phi, e^{Rt'} \Delta^h_q f \rangle |dt' 
	\leq \mathcal{A}_{q}+\mathcal{B}_{q}+\mathcal{R}_{q} , 
\end{align}
where
\begin{align*}
	& \mathcal{A}_{q}=\int_0^T |\langle e^{Rt'} \Delta^h_q(T^h_a b)_\phi, e^{Rt'} \Delta^h_q f \rangle|dt' , 
	\\
	& \mathcal{B}_{q}=\int_0^T |\langle e^{Rt'} \Delta^h_q(T^h_b a)_\phi, e^{Rt'} \Delta^h_q f \rangle |dt' , 
	\\
	& \mathcal{R}_{q}=\int_0^T |\langle e^{Rt'} \Delta^h_q(R^h(a,b))_\phi, e^{Rt'} \Delta^h_q f \rangle |dt' . 
\end{align*}
Then we have from the definition of the operator $T^h_a$ and \eqref{DhSh=0}
\begin{align*}
	& \mathcal{A}_{q}    \leq    \sum_{|q'-q|\leq 4 } \int_0^T e^{2Rt'}
       \left|\langle \Delta^h_q \left( S_{q'-1}^h a\,\Delta^h_{q'} b\right)_\phi, \Delta^h_q f_\phi \rangle \right|dt', 
		\\
	& \mathcal{B}_{q}    \leq    \sum_{|q'-q|\leq 4 } \int_0^T e^{2Rt'}
       \left|\langle \Delta^h_q \left( S_{q'-1}^h b\,\Delta^h_{q'} a\right)_\phi, \Delta^h_q f_\phi \rangle \right|dt', 
		\\
	& \mathcal{R}_{q} \leq     \sum_{q \geq q-3} \int_0^T e^{2Rt'}
       \left|\langle \Delta^h_q \left( \Delta^h_{q'} a \,\Delta^h_{q'} b \right)_\phi, \Delta^h_q f_\phi \rangle \right|dt' . 
\end{align*}
Throughout this section, we will keep these notations (which can however be modified when needed). 
Following the remark \ref{rem dq}, we define 
\  
For any $f\in L^2(R)$, we define \[f^+=\mathcal{F}^{-1}_h(|\widehat{f}|) . \]
We remark that, for any $q \in \mathbb{Z}$, we have
\begin{align}\label{f^+}
\Delta_q^h f^+=(\Delta_q^h f)^+,    \qquad 
S_q^h f^+=(S_q^h f)^+ \qquad \text{and} \qquad
    \|f^+\|_{L^2}=\|f\|_{L^2}.
\end{align}

We will use the following lemma (see \cite{CGP2011}):
\begin{lemma}
For smooth functions we have
\begin{align*}
&\left|\mathcal{F}_h \left( (S_{q'-1}^h a \,\, \Delta_{q'}^h b)_\phi \right) (\xi,y) \right| 
   \leq  |\mathcal{F}_h(S_{q'-1}^ha_\phi^+ \,\,\Delta_{q'}^h b_\phi^+)(\xi,y)|,
\\
   &\left|\mathcal{F}_h \left( (\Delta_{q'}^h a \,\, \Delta_{q'}^h b)_\phi \right) (\xi,y) \right| 
   \leq  |\mathcal{F}_h(\Delta_{q'}^h a_\phi^+ \,\,\Delta_{q'}^h b_\phi^+)(\xi,y)|.
\end{align*}
\end{lemma}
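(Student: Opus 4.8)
The plan is to deduce both inequalities from a single elementary property of the symbol $\phi(t,\xi)=(a-\lambda\theta(t))|\xi|$, namely its subadditivity in the frequency variable. As long as we stay on the interval $[0,T^*)$, where by definition $\theta(t)<a/\lambda$ and hence $a-\lambda\theta(t)\ge 0$, the triangle inequality $|\xi|\le|\xi-\eta|+|\eta|$ immediately gives
\[
e^{\phi(t,\xi)}\le e^{\phi(t,\xi-\eta)}\,e^{\phi(t,\eta)},\qquad \forall\,\xi,\eta\in\mathbb{R}.
\]
This nonnegativity of the coefficient $a-\lambda\theta(t)$ is the one structural fact that makes the argument work, and it is the only place where the hypotheses enter.

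First I would express the horizontal Fourier transform of a product as a convolution: for smooth functions $g,h$ of the variable $x$,
\[
\mathcal{F}_h(gh)(\xi,y)=\frac{1}{2\pi}\int_{\mathbb{R}}\widehat{g}(\xi-\eta,y)\,\widehat{h}(\eta,y)\,d\eta.
\]
Taking $g=S^h_{q'-1}a$ and $h=\Delta^h_{q'}b$, multiplying by the weight $e^{\phi(t,\xi)}$ produced by the operator $(\cdot)_\phi$, moving the absolute value inside the integral, and inserting the subadditivity estimate, I obtain
\[
\left|\mathcal{F}_h\big((S^h_{q'-1}a\,\Delta^h_{q'}b)_\phi\big)(\xi,y)\right|\le\frac{1}{2\pi}\int_{\mathbb{R}}e^{\phi(t,\xi-\eta)}\big|\mathcal{F}_h(S^h_{q'-1}a)(\xi-\eta,y)\big|\,e^{\phi(t,\eta)}\big|\mathcal{F}_h(\Delta^h_{q'}b)(\eta,y)\big|\,d\eta.
\]

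It then remains to recognise the right-hand side as the Fourier transform of a product of the ``plus'' functions. Since $e^{\phi}>0$ and the Littlewood--Paley profiles $\chi,\psi$ are nonnegative, the relation $\widehat{a^+_\phi}=e^{\phi}|\widehat a|$ together with the commutation identities recorded above ($\Delta^h_q f^+=(\Delta^h_q f)^+$ and $S^h_q f^+=(S^h_q f)^+$) gives $\mathcal{F}_h(S^h_{q'-1}a^+_\phi)(\zeta,y)=\chi(2^{-(q'-1)}|\zeta|)\,e^{\phi(t,\zeta)}|\widehat a(\zeta,y)|$ and $\mathcal{F}_h(\Delta^h_{q'}b^+_\phi)(\eta,y)=\psi(2^{-q'}|\eta|)\,e^{\phi(t,\eta)}|\widehat b(\eta,y)|$. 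Substituting $\zeta=\xi-\eta$, the integrand above is exactly the nonnegative convolution integrand defining $\mathcal{F}_h(S^h_{q'-1}a^+_\phi\,\Delta^h_{q'}b^+_\phi)(\xi,y)$, which proves the first inequality; the second follows verbatim after replacing $S^h_{q'-1}$ by $\Delta^h_{q'}$ in the first factor. I expect no serious obstacle: the computation is routine, and the only step requiring genuine care is the subadditivity, which obliges us both to track the sign condition $a-\lambda\theta(t)\ge0$ and to use the nonnegativity of $\chi$ and $\psi$.
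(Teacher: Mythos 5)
Your proof is correct and follows essentially the same route as the paper: write $\mathcal{F}_h$ of the product as a convolution, invoke the subadditivity $\phi(t,\xi)\leq\phi(t,\xi-\eta)+\phi(t,\eta)$, and identify the resulting nonnegative integrand with the convolution defining $\mathcal{F}_h(S^h_{q'-1}a^+_\phi\,\Delta^h_{q'}b^+_\phi)$. The only difference is that you make explicit two points the paper leaves implicit, namely the sign condition $a-\lambda\theta(t)\geq 0$ on $[0,T^*)$ behind the subadditivity, and the nonnegativity of $\chi,\psi$ behind the identities $(S^h_q f)^+=S^h_q f^+$ and $(\Delta^h_q f)^+=\Delta^h_q f^+$.
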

\begin{proof}
Setting $\tilde{a}=S_{q'-1}^h a$ and $\tilde{b}=\Delta_{q'}^h b$, 
we have 
from the inequality $\phi(t,\xi) \leq  \phi(t,\xi-\eta) + \phi(t,\eta)$ 
\begin{align*}
\left|\mathcal{F}_h \left((\tilde{a} \,\, \tilde{b})_\phi \right) (\xi,y) \right| 
&= \left| \,e^{\phi(t,\xi)} 
   \int_{\mathbb{R}} \mathcal{F}_h(\tilde{a})(\xi-\eta,y) \, \mathcal{F}_h(\tilde{b})(\eta,y) \,d \eta  \right| 
\\
&\leq  
   \int_{\mathbb{R}} 
     \left|e^{\phi(t,\xi-\eta)} \mathcal{F}_h(\tilde{a})(\xi-\eta,y)\right|  \, 
     \left|e^{\phi(t,\eta)} \mathcal{F}_h(\tilde{b})(\eta,y)  \right|   \,d \eta 
\\
&\leq  
   \int_{\mathbb{R}} 
       \mathcal{F}_h(\tilde{a}^+_\phi)(\xi-\eta,y)  \, 
       \mathcal{F}_h(\tilde{b}^+_\phi)(\eta,y)    \,d \eta,
\end{align*}
and this concludes the proof of the first inequality,
and the same procedure can be followed to deduce the second inequality.
\end{proof}
Therefore from this lemma, we derive the following corollary 
from Plancherel formula, Fubini's theorem and the H\"older's inequality.
\begin{corollary}\label{cor Dq Sq}
For smooth functions we have
\begin{align*}
&\|\Delta_q^h  f_\phi\|_{L^2} \lesssim \|f_\phi\|_{L^2},
\end{align*}
\begin{align*}
\left|\langle \Delta_{q}^h(S_{q'-1}^h a \, \Delta_{q'}^hb)_\phi, \Delta_q^h f_\phi  \rangle\right|
  & \lesssim \|S_{q'-1}^h a_\phi^+ \|_{L^\infty} \, \| \Delta_{q'}^h  b_\phi \|_{L^2} \, \|\Delta_q^h  f_\phi\|_{L^2}, %\label{estim 88-22-22}
  \\
  & \lesssim \|S_{q'-1}^h a_\phi^+ \|_{L^2_z(L^\infty_x)} \, \| \Delta_{q'}^h  b_\phi \|_{L^\infty_z(L^2_x)} \, \|\Delta_q^h  f_\phi\|_{L^2},
  \\
   & \lesssim \|S_{q'-1}^h a_\phi^+ \|_{L^2_z(L^\infty_x)} \, \| \Delta_{q'}^h  b_\phi \|_{L^2} \, \|\Delta_q^h  f_\phi\|_{L^\infty_z(L^2_x)} 
\end{align*}
and
\begin{align*}
\left|\langle \Delta_{q}^h (\Delta_{q'}^h a \, \Delta_{q'}^hb)_\phi, \Delta_q^h f_\phi  \rangle\right|
& \lesssim \|\Delta_{q'}^h a_\phi^+ \|_{L^\infty} \, \| \Delta_{q'}^h  b_\phi \|_{L^2} \, \|\Delta_q^h  f_\phi\|_{L^2} , 
\\
& \lesssim \|\Delta_{q'}^h a_\phi^+ \|_{L^2_z(L^\infty_x)} \, \| \Delta_{q'}^h  b_\phi \|_{L^\infty_z(L^2_x)} \, \|\Delta_q^h  f_\phi\|_{L^2} , 
\\
& \lesssim \|\Delta_{q'}^h a_\phi^+ \|_{L^\infty_z(L^2_x)} \, \| \Delta_{q'}^h  b_\phi \|_{L^2} \, \|\Delta_q^h  f_\phi\|_{L^2_z(L^\infty_x)}.  
\end{align*}
\end{corollary}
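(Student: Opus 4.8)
The plan is to derive every inequality of the corollary from the pointwise Fourier domination of the previous lemma, by first reducing each scalar product to one involving only the nonnegative functions $a_\phi^+$, $b_\phi^+$, $f_\phi^+$, and then distributing the three resulting factors with Hölder's inequality in the $x$ and $z$ variables.

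For the first bound I would simply invoke Plancherel's theorem in the horizontal variable: since $\mathcal{F}_h(\Delta_q^h f_\phi)(\xi,y)=\psi(2^{-q}\abs{\xi})\widehat{f_\phi}(\xi,y)$ and $\abs{\psi(2^{-q}\abs{\xi})}\lesssim 1$, integrating in $\xi$ and then in $y\in\,]0,1[$ (Fubini) gives $\norm{\Delta_q^h f_\phi}_{L^2}\lesssim\norm{f_\phi}_{L^2}$.

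For the product estimates the key reduction is the following. Because $\psi$ is real and even, $\Delta_q^h$ is self-adjoint, so Plancherel in $x$ and Fubini in $z$ give (up to a normalising constant)
\[ \langle \Delta_q^h(S_{q'-1}^h a\,\Delta_{q'}^h b)_\phi,\Delta_q^h f_\phi\rangle=\int_0^1\!\!\int_{\mathbb R}\psi(2^{-q}\abs{\xi})^2\,\mathcal F_h\big((S_{q'-1}^h a\,\Delta_{q'}^h b)_\phi\big)(\xi,y)\,\overline{\widehat{f_\phi}(\xi,y)}\,d\xi\,dy. \]
Since the full symbol $\psi(2^{-q}\abs{\xi})^2\ge 0$, I would bound the modulus of the integrand using the previous lemma together with $\abs{\widehat{f_\phi}}=\widehat{f_\phi^+}$, which yields
\[ \abs{\langle \Delta_q^h(S_{q'-1}^h a\,\Delta_{q'}^h b)_\phi,\Delta_q^h f_\phi\rangle}\le \langle \Delta_q^h(S_{q'-1}^h a_\phi^+\,\Delta_{q'}^h b_\phi^+),\Delta_q^h f_\phi^+\rangle. \]
All three factors on the right now have nonnegative Fourier transforms, so one may work in physical space. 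Applying Cauchy--Schwarz and the $L^2$-boundedness of $\Delta_q^h$ (the first bound) together with the identities \eqref{f^+}, I would factor out $\norm{\Delta_q^h f_\phi}_{L^2}$ and reduce to estimating $\norm{S_{q'-1}^h a_\phi^+\,\Delta_{q'}^h b_\phi^+}_{L^2}$. The three displayed lines then correspond to three ways of distributing this product by Hölder: placing $a_\phi^+$ in $L^\infty_{x,z}$ and $b_\phi^+$ in $L^2$ gives the first line; applying Hölder in $x$ fibrewise ($L^\infty_x\cdot L^2_x$) and then in $z$ ($L^2_z\cdot L^\infty_z$) gives the second; while keeping $f_\phi^+$ in $L^\infty_z(L^2_x)$ and estimating the product in $L^1_z(L^2_x)$ gives the third. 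Replacing the $+$ norms of $b_\phi$ and $f_\phi$ by the original ones via \eqref{f^+} produces exactly the stated right-hand sides, and the inequalities for $R^h(a,b)$ follow identically after replacing $S_{q'-1}^h a$ by $\Delta_{q'}^h a$ and invoking the second inequality of the lemma.

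The only genuinely delicate point is the passage to the nonnegative functions $a_\phi^+,b_\phi^+,f_\phi^+$: it rests on the nonnegativity of the symbol $\psi(2^{-q}\abs{\xi})^2$ of the paired dyadic blocks and on the subadditivity $\phi(t,\xi)\le\phi(t,\xi-\eta)+\phi(t,\eta)$ already exploited in the previous lemma. Everything after that reduction is a routine bookkeeping of mixed-norm Hölder inequalities, so I do not expect any single later step to present a serious obstacle.
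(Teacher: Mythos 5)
Your proof is correct and takes essentially the same route as the paper, which derives the corollary from the preceding pointwise Fourier-domination lemma via Plancherel, Fubini and H\"older; your write-up simply makes explicit the reduction to the nonnegative functions $a_\phi^+$, $b_\phi^+$, $f_\phi^+$ and the mixed-norm H\"older bookkeeping. The one point you flag as delicate --- nonnegativity of the symbols $\chi$, $\psi$ so that the paired blocks and the relevant Fourier transforms stay nonnegative --- is part of the standard Littlewood--Paley construction used here, so nothing is missing.
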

We will also use the following Poincar\'e inequality : 
For a function $u\in H^2$ such that $u$ and $u'$ vanish for $z=0,1$ we have
\begin{equation}\label{poincare}
 \|u \|_{L^\infty} \lesssim  \|\partial_z u \|_{L^2} 
 \quad\text{and}\quad
 \|u \|_{L^2} \lesssim  \|\partial_z u \|_{L^2} \lesssim \|\partial_z^2 u \|_{L^2}.
\end{equation}

\begin{proof}[Proof of Estimate \eqref{estimate_1}]
We apply the Bony's decomposition with $a=u$, $b=\partial_x u$ and $f=u_\varphi$.
Then following the notations of \eqref{int abf} we can write
\begin{align*}
\int_0^T |<e^{Rt'} \Delta^h_q(u \partial_x u)_\phi, e^{Rt'} \Delta^h_q u_\phi>|dt' \leq \mathcal{A}_{q}+ \mathcal{B}_{q}+ \mathcal{R}_{q}.
\end{align*}
\noindent
$\bullet$ Estimate of $\mathcal{A}_{q} = \int_0^T |<e^{Rt'} \Delta^h_q(T^h_u \partial_x u)_\phi, e^{Rt'} \Delta^h_q u_\phi>|dt'$.
We have from \eqref{DhSh=0}
\begin{align*}
\mathcal{A}_{q}&=\int_0^T \Big |\Big<e^{Rt'} \Delta^h_q\Big(\sum_{q' \in \mathbb{Z}} S_{q'-1}^h u\,\Delta^h_{q'} \partial_x u\Big)_\phi, e^{Rt'} \Delta^h_q u_\phi \Big> \Big|dt'
\\
& \leq \sum_{|q'-q|\leq 4 } \int_0^T e^{2Rt'}\Big|\Big< \Delta^h_q\left( S_{q'-1}^h u\,\Delta^h_{q'} \partial_x u\right)_\phi, \Delta^h_q u_\phi \Big> \Big|dt'.
\end{align*}
From the previous inequality and corollary \ref{cor Dq Sq} we get 
\begin{align*}
	\mathcal{A}_q 
	\lesssim 
	\sum_{|q'-q|\leq 4 } \int_0^T e^{2Rt'} \|S_{q'-1}^h u^+_\phi\|_{L^{\infty}}\,\|\Delta^h_{q'} \partial_x u_\phi\|_{L^2}  \|\Delta^h_q u_\phi \|_{L^2}    \,dt'. 
\end{align*}
Since the support of $\Delta_q^h u_\phi$ is in a ring and using the first inequality of Lemma \ref{lem bernstein} with $\lambda=2^q$, we get
\begin{align*}%\label{estim L8-L8L2}
\|\Delta^h_q u^+_\phi\|_{L^{\infty}} 
  &\lesssim 2^{\frac{q}{2}} \|\Delta^h_q u^+_\phi\|_{L^{\infty}_z({L^2_x})}
  = 2^{\frac{q}{2}} \|\Delta^h_q u_\phi\|_{L^{\infty}_z({L^2_x})}  . 
\end{align*}
Therefore from Poincar\'e inequality \eqref{poincare} and remark \ref{rem dq} we get 
\begin{align*}%\label{wight}
\|\Delta^h_q u^+_\phi\|_{L^{\infty}} 
 &\lesssim 2^{\frac{q}{2}} \|\Delta^h_q u_\phi\|_{L^2_x(L^{\infty}_z)}   
  \lesssim 2^{\frac{q}{2}}  \|\Delta^h_q \partial^2_z u_\phi\|_{L^2} 
\\
 &\lesssim d_q(\partial_z^2 u_\phi) \|\partial^2_z u_\phi\|_{B^\frac{1}{2}}. 
\end{align*}
Then from the definition \ref{def Dh} of $S_{q'-1}$ and using $\sum_q d_q(\partial_z^2 u_\phi) = 1$, 
 \begin{equation}\label{estim Sq u}
 \|S_{q'-1}^h u^+_\phi\|_{L^{\infty}} \lesssim \|\partial^2_z u_\phi\|_{B^\frac{1}{2}}. 
\end{equation}
We deduce from the previous inequalities, lemma \ref{lem bernstein} and the cauchy-schwarz inequality that
\begin{align*}
\mathcal{A}_q
& \lesssim \sum_{|q'-q|\leq 4 } \int_0^T e^{2Rt'} \|\partial^2_z u_\phi\|_{B^\frac{1}{2}}\,
  \left(2^{q'} \|\Delta^h_{q'}  u_\phi\|_{L^2} \right)  \|\Delta^h_q u_\phi \|_{L^2}  \,dt'
\\
& \lesssim \sum_{|q'-q|\leq 4 }2^{q'} 
  \left(\int_0^T e^{2Rt'} \|\partial^2_z u_\phi\|_{B^\frac{1}{2}}\,\|\Delta^h_{q'}  u_\phi\|^2_{L^2} \,dt'\right)^{\frac{1}{2}}
  \times \left(\int_0^T e^{2Rt'} \|\partial^2_z u_\phi\|_{B^\frac{1}{2}}\, \|\Delta^h_q u_\phi \|^2_{L^2} \,dt'\right)^{\frac{1}{2}}.
\end{align*}
Since  $\theta'(t)=\|\partial^2_z u_\phi\|_{B^\frac{1}{2}}$, we have from remark \ref{rem dqf}
\begin{equation*}
\left(\int_0^T e^{2Rt'} \|\partial^2_z u_\phi\|_{B^\frac{1}{2}}\,\|\Delta^h_{q'}  u_\phi\|^2_{L^2} \,dt'\right)^{\frac{1}{2}}
\lesssim 2^{-q'(s+1/2)} \, d_{q'}(u_\phi,\theta') \, \|e^{Rt'} u_\phi\|_{\widetilde{L}^2_{T,\theta'(t)}  \, (B^{s+\frac{1}{2}})}
\end{equation*}
and we finaly obtain
\begin{align}\label{estim Aq}
\mathcal{A}_q \lesssim 2^{-2qs} \tilde{d_q} \, \|e^{Rt'} u_\phi\|^2_{\widetilde{L}^2_{T,\theta'(t)}(B^{s+\frac{1}{2}})} , 
\end{align}
where $\tilde{d_q}$ is the square root summable sequence (since $\sum_q d_q(u_\phi)= \sum_{q'}d_{q'}(u_\phi,\theta')=1$) defined by 
\begin{align}\label{dq tilde}
\tilde{ d_q}=d_q(u_\phi) \left(\sum_{|q'-q|\leq 4 } d_{q'}(u_\phi,\theta') 2^{(q-q')(s-1/2)} \right).
\end{align}
\noindent
$\bullet$ Estimate of $\mathcal{B}_q = 
\int_0^T |<e^{Rt'} \Delta^h_q(T^h_{\partial_x u} u)_\phi, e^{Rt'} \Delta^h_q u_\phi>|dt'$. 
From the notations \eqref{int abf}, 
\begin{align*}
\mathcal{B}_q& \lesssim \sum_{|q'-q|\leq 4 } \int_0^T e^{2Rt'} 
  \|S_{q'-1}^h \partial_x u^+_\phi\|_{L^{\infty}}\,\|\Delta^h_{q'}u_\phi\|_{L^2} 
   \|\Delta^h_q u_\phi \|_{L^2} \,dt'.
\end{align*}
Then, following the proof of Estimate \eqref{estim Sq u} and the definition \ref{def Dh} of $S^h_{q}$,  we have
\begin{align*}
\|S_{q'-1}^h \partial_x u^+_\phi\|_{L^{\infty}} 
 \lesssim 2^{q'}\| \partial^2_z u_\phi \|_{B^{\frac{1}{2}}}.
\end{align*}
Therefore using the Cauchy-Schwarz inequality and remark \ref{rem dq} we derive
\begin{align*}
\mathcal{B}_q & \lesssim  \sum_{|q'-q|\leq 4 } \int_0^T e^{2Rt'}  
  \left(2^{q'} \|\partial^2_z u_\phi\|_{B^\frac{1}{2}} \right) \,
   \|\Delta^h_{q'}u_\phi\|_{L^2}  \|\Delta^h_q u_\phi \|_{L^2}  \,dt'
 \\
 & \lesssim \sum_{|q'-q|\leq 4 }2^{q'} \left(\int_0^T e^{2Rt'} \|\partial^2_z u_\phi\|_{B^\frac{1}{2}}\,
 \|\Delta^h_{q'}  u_\phi\|^2_{L^2}\right)^{\frac{1}{2}}
 \times \left(\int_0^T e^{2Rt'} \|\partial^2_z u_\phi\|_{B^\frac{1}{2}}\, 
 \|\Delta^h_q u_\phi \|^2_{L^2}\right)^{\frac{1}{2}}
  \\
 & \lesssim  2^{-2qs} d_q(u_\phi) \left(\sum_{|q'-q|\leq 4 } d_{q'}(u_\phi) 2^{(q-q')(s-1/2)} \right)
  \|e^{Rt'} u_\phi\|^2_{\widetilde{L}^2_{T,\theta'(t)}(B^{s+\frac{1}{2}})} . 
\end{align*}
Then from the definition \eqref{dq tilde} of $\tilde{d_q}$
\begin{align}\label{estim Bq}
\mathcal{B}_q \lesssim 2^{-2qs} \tilde{d_q} \, \|e^{Rt'} u_\phi\|^2_{\widetilde{L}^2_{T,\theta'(t)}(B^{s+\frac{1}{2}})}.
\end{align}
\noindent
$\bullet$ Estimate of $\mathcal{R}_q$.
We have from the definition \eqref{int abf}
\begin{align*}
\mathcal{R}_{q} \leq \sum_{q'\geq q-3} \int_0^T e^{2Rt'}
       \left|\langle \Delta^h_q \left( \Delta^h_{q'} u \,\Delta^h_{q'} \partial_x u \right)_\phi, \Delta^h_q u_\phi \rangle \right|dt'. 
\end{align*}
Following the proof of \eqref{estim Sq u} and using corollary \ref{cor Dq Sq} and lemma \ref{lem bernstein} we get
\begin{align*}
\mathcal{R}_q 
& \lesssim \sum_{q'\geq q-3 } \int_0^T e^{2Rt'} \|\widetilde{\Delta}^h_{q'}u^+_\phi\|_{L^{\infty}_z(L^2_x)}\,
  \|\Delta^h_{q'}\partial_x u_\phi\|_{L^2}  \|\Delta^h_q u_\phi \|_{L^2_z(L^{\infty}_x)} \,dt'
\\
& \lesssim \sum_{q'\geq q-3 } \int_0^T e^{2Rt'}
    \left(2^{-\frac{q'}{2}}d_{q'}(\partial_z^2 u_\phi)\|\partial^2_z u_\phi\|_{B^{\frac{1}{2}}}\right)\,
    \left(2^{q'}\|\Delta^h_{q'} u_\phi\|_{L^2}\right)\,
    \left(2^{\frac{q}{2}}\|\Delta^h_q u_\phi \|_{L^2}\right)\,dt' . 
\end{align*}
Them using $d_{q'}(\partial_z^2 u_\phi) \leq 1$, the Cauchy-Schwarz inequality and remark \ref{rem dq} we get 
\begin{align*}
\mathcal{R}_q 
& \lesssim 2^{\frac{q}{2}}\sum_{q'\geq q-3 } 2^{\frac{q'}{2}}\left(\int_0^T e^{2Rt'} \|\partial^2_z u_\phi\|_{B^\frac{1}{2}}\,\|\Delta^h_{q'}  u_\phi\|^2_{L^2}\right)^{\frac{1}{2}}
 \times \left(\int_0^T e^{2Rt'} \|\partial^2_z u_\phi\|_{B^\frac{1}{2}}\, \|\Delta^h_q u_\phi \|^2_{L^2}\right)^{\frac{1}{2}} 
\\
& \lesssim 2^{\frac{q}{2}}\sum_{q'\geq q-3 }2^{\frac{q'}{2}} 
  \left(2^{-q'(s+1/2)} d_{q'}(u_\phi,\theta') \|e^{Rt'} u_\phi\|_{\widetilde{L}^2_{T,\theta'(t)}(B^{s+\frac{1}{2}})}\right)\, 
  \left(2^{-q(s+1/2)} d_{q}(u_\phi,\theta') \|e^{Rt'} u_\phi\|_{\widetilde{L}^2_{T,\theta'(t)}(B^{s+\frac{1}{2}})}\right) . 
\end{align*}
That is
\begin{align}\label{estim Rq}
\mathcal{R}_q 
& \lesssim 2^{-2qs} \check{d_q}\,\|e^{Rt'} u_\phi\|^2_{\widetilde{L}^2_{T,\theta'(t)}(B^{s+\frac{1}{2}})} , 
\end{align}
where $\check{d_q}$ is the square root summable sequence defined by
\begin{equation*}
\check{d_q} = d_{q}(u_\phi,\theta')\sum_{q'\geq q-3 } 2^{(q-q')s} d_{q'}(u_\phi,\theta') .
\end{equation*}
\noindent
$\bullet$ 
By Summing the estimates \eqref{estim Aq}, \eqref{estim Bq} and \eqref{estim Rq} we deduce \eqref{estimate_1}.
\end{proof}

}%---------- END COLOR LEO ----------

\begin{proof}[Proof of Estimate $(\ref{estimate_2})$]
We apply the Bony's decomposition with $a=u$, $b=\partial_x \partial_z u$ and $f=\partial_z u_\varphi$.
Then following the notations of \eqref{int abf} we can write
\begin{align*}
\int_0^T |<e^{Rt'} \Delta^h_q(u \partial_x \partial_z u)_\phi, e^{Rt'} \Delta^h_q \partial_z u_\phi>|dt' \leq \mathcal{A}_{q}+\mathcal{B}_{q}+\mathcal{R}_{q}.
\end{align*}
\noindent
$\bullet$ Estimate of $\mathcal{A}_{q} = 
\int_0^T |<e^{Rt'} \Delta^h_q(T^h_u \partial_x \partial_z u)_\phi, e^{Rt'} \Delta^h_q \partial_z u_\phi>|dt'$. 
\begin{align*}
 \mathcal{A}_{q}& \lesssim \sum_{|q'-q|\leq 4 } \int_0^T e^{2Rt'} \|S_{q'-1}^h u^+_\phi\|_{L^{\infty}}\,\|\Delta^h_{q'} \partial_x \partial_z u_\phi\|_{L^2}  \|\Delta^h_q \partial_z  u_\phi \|_{L^2} 
 \\
 & \lesssim \sum_{|q'-q|\leq 4 } \int_0^T e^{2Rt'} \|\partial^2_z u_\phi\|_{B^\frac{1}{2}}\,2^{q'} \|\Delta^h_{q'}  \partial_z u_\phi\|_{L^2}  \|\Delta^h_q \partial_z u_\phi \|_{L^2}  
 \\
 & \lesssim \sum_{|q'-q|\leq 4 }2^{q'} \left(\int_0^T e^{2Rt'} \|\partial^2_z u_\phi\|_{B^\frac{1}{2}}\,\|\Delta^h_{q'} \partial_z u_\phi\|^2_{L^2}\right)^{\frac{1}{2}}
 \times \left(\int_0^T e^{2Rt'} \|\partial^2_z u_\phi\|_{B^\frac{1}{2}}\, \|\Delta^h_q \partial_z u_\phi \|^2_{L^2}\right)^{\frac{1}{2}}
 \\
 & \lesssim  2^{-2qs} \tilde{d_q} \, \|e^{Rt'} \partial_z u_\phi\|^2_{\widetilde{L}^2_{T,\theta'(t)}(B^{s+\frac{1}{2}})} , 
\end{align*}
where 
\begin{align}\label{dq tilde2}
\tilde{d_q}=d_q(\partial_z u_\phi, \theta') \left(\sum_{|q'-q|\leq 4 } d_{q'}(\partial_z u_\phi, \theta') 2^{(q-q')(s-\frac{1}{2})} \right) . 
\end{align}
\noindent
$\bullet$ Estimate of $\mathcal{B}_{q} = 
\int_0^T |<e^{Rt'} \Delta^h_q(T^h_{\partial_x\partial_z u} u)_\phi, e^{Rt'} \Delta^h_q \partial_z u_\phi>|dt'$. 
\begin{align*}
\mathcal{B}_{q} & \lesssim \sum_{|q'-q|\leq 4 } \int_0^T e^{2Rt'} \|S_{q'-1}^h \partial_x\partial_z u^+_\phi\|_{L^{\infty}}\,\|\Delta^h_{q'}u_\phi\|_{L^2}  \|\Delta^h_q\partial_z u_\phi \|_{L^2} \,dt 
 \\
 & \lesssim \sum_{|q'-q|\leq 4 } \int_0^T e^{2Rt'} 2^{q'} \|\partial^2_z u_\phi\|_{B^\frac{1}{2}}\,\|\Delta^h_{q'} \partial_z u_\phi\|_{L^2}  \|\Delta^h_q \partial_z u_\phi \|_{L^2} 
\\
 & \lesssim \sum_{|q'-q|\leq 4 }2^{q'} \left(\int_0^T e^{2Rt'} \|\partial^2_z u_\phi\|_{B^\frac{1}{2}}\,\|\Delta^h_{q'} \partial_z u_\phi\|^2_{L^2}\right)^{\frac{1}{2}}
 \times \left(\int_0^T e^{2Rt'} \|\partial^2_z u_\phi\|_{B^\frac{1}{2}}\, \|\Delta^h_q \partial_z u_\phi \|^2_{L^2}\right)^{\frac{1}{2}}
\\
 & \lesssim  2^{-2qs} \tilde{d_q} \, \|e^{Rt'} \partial_z u_\phi\|^2_{\widetilde{L}^2_{T,\theta'(t)}(B^{s+\frac{1}{2}})} , 
\end{align*}
with the same previous definition \eqref{dq tilde2} of $\tilde{d_q}$. 

\noindent
$\bullet$  Estimate of $\mathcal{R}_{q} = 
\int_0^T |<e^{Rt'} \Delta^h_q(R^h(u,\partial_x\partial_z u))_\phi, e^{Rt'} \Delta^h_q \partial_z u_\phi>|dt'$. 
\begin{align*}
\mathcal{R}_{q} 
& \lesssim \sum_{q'\geq q-3 } 
  \int_0^T e^{2Rt'} 
    \|\widetilde{\Delta}^h_{q'}u^+_\phi\|_{L^{\infty}_z(L^2_x)}\,
    \|\Delta^h_{q'}\partial_x \partial_z u_\phi\|_{L^2}\, 
    \|\Delta^h_q \partial_z u_\phi \|_{L^2_z(L^{\infty}_x)} \,dt'
\\
& \lesssim \sum_{q'\geq q-3 } \int_0^T e^{2Rt'} 2^{-\frac{q'}{2}} \|\partial^2_z u_\phi\|_{B^{\frac{1}{2}}}\,2^{q'}\|\Delta^h_{q'} \partial_z u_\phi\|_{L^2}\, 2^{\frac{q}{2}} \|\Delta^h_q \partial_z  u_\phi \|_{L^2}\,dt'
\\
& \lesssim 2^{\frac{q}{2}}\sum_{q'\geq q-3 } 
  2^{\frac{q'}{2}}\left(\int_0^T e^{2Rt'} 
  \|\partial^2_z u_\phi\|_{B^\frac{1}{2}}\,\|\Delta^h_{q'} 
  \partial_z u_\phi\|^2_{L^2}\right)^{\frac{1}{2}}
 \times 
 \left(\int_0^T e^{2Rt'} \|\partial^2_z u_\phi\|_{B^\frac{1}{2}}\, 
 \|\Delta^h_q \partial_z u_\phi \|^2_{L^2}\right)^{\frac{1}{2}}
\\
& \lesssim  2^{-2qs} \check{d_q} \, 
  \|e^{Rt'} \partial_z u_\phi\|^2_{\widetilde{L}^2_{T,\theta'(t)}(B^{s+\frac{1}{2}})} , 
\end{align*}
where
\begin{equation}\label{dq check 2}
\check{d_q} = d_{q}(\partial_z u_\phi, \theta')
   \sum_{q'\geq q-3 } 2^{(q-q')s} d_{q'}(\partial_z u_\phi, \theta') . 
\end{equation}

\noindent
$\bullet$ By Summing  the above estimates for $\mathcal{A}_q$, $\mathcal{B}_q$ and $\mathcal{R}_q$, 
we deduce $(\ref{estimate_2}$).
\end{proof}

\begin{proof}[Proof of Estimate $(\ref{estimate_3})$]
We apply the Bony's decomposition with $a=\partial_z u$, $b=\partial_x  u$ and $f=\partial_z u_\varphi$.
Then following the notations of \eqref{int abf} we can write
\begin{align*}
\int_0^T |<e^{Rt'} \Delta^h_q( \partial_z u \, \partial_x  u )_\phi, e^{Rt'} \Delta^h_q \partial_z u_\phi>|dt' \leq \mathcal{A}_{q}+\mathcal{B}_{q}+\mathcal{R}_{q}.
\end{align*}
\noindent
$\bullet$ Estimate of $\mathcal{A}_{q} = 
\int_0^T |<e^{Rt'}\Delta^h_q(T^h_{\partial_z u}\partial_x  u)_\phi,e^{Rt'}\Delta_q^h \partial_z u_\phi>|\,dt'$. 
\begin{align*}
\mathcal{A}_{q}
 & \lesssim \sum_{|q'-q|\leq 4 } 
 \int_0^T e^{2Rt'} 
 \|S_{q'-1}^h \partial_zu^+_\phi\|_{L^2_z(L_x^{\infty})}\,
 \|\Delta^h_{q'} \partial_x u_\phi\|_{L_z^{\infty}(L_x^2)} 
 \|\Delta^h_q \partial_z u_\phi \|_{L^2} . 
\end{align*}
Using  Lemma \ref{lem bernstein}, Poincar\'e inequality  we have
\begin{align*}
\|\Delta^h_q \partial_zu^+_\phi\|_{L^2_z(L^{\infty}_x)} &\lesssim 2^{\frac{q}{2}} \|\Delta^h_q \partial_zu_\phi\|_{L^2}
\\
& \lesssim d_q \|\partial^2_z u_\phi\|_{B^\frac{1}{2}} . 
\end{align*}
Then
\begin{equation*}
 \|S_{q'-1}^h \partial_zu^+_\phi\|_{L^2_z(L_x^{\infty})}\lesssim  \|\partial^2_z u_\phi\|_{B^\frac{1}{2}} . 
\end{equation*}
Using Poincar\'e inequality and  Lemma \ref{lem bernstein}, we have
\begin{align*}
\|\Delta^h_{q'} \partial_x u_\phi\|_{L_z^{\infty}(L_x^2)}\lesssim & \|\Delta^h_{q'} \partial_x \partial_z u_\phi\|_{L^2}
\\
\lesssim & 2^{q'} \|\Delta^h_{q'} \partial_z u_\phi\|_{L^2}. 
\end{align*}
Then 
\begin{align*}
\mathcal{A}_{q}
\lesssim& \sum_{|q'-q|\leq 4 } 2^{q'} \int_0^T e^{2Rt'}\| \partial^2_z u_\phi \|_{B^{\frac{1}{2}}}\,\|\Delta^h_{q'} \partial_z u_\phi\|_{L^2} \|\Delta^h_q \partial_zu_\phi \|_{L^2} \,dt'
\\
 \lesssim& \sum_{|q'-q|\leq 4 }2^{q'} \left(\int_0^T e^{2Rt'} \|\partial^2_z u_\phi\|_{B^\frac{1}{2}}\,\|\Delta^h_{q'}  \partial_z u_\phi\|^2_{L^2}\right)^{\frac{1}{2}}
 \times \left(\int_0^T e^{2Rt'} \|\partial^2_z u_\phi\|_{B^\frac{1}{2}}\, \|\Delta^h_q \partial_z u_\phi \|^2_{L^2}\right)^{\frac{1}{2}} 
\\
\lesssim& 2^{-2qs} \tilde{d_q}  \, \|e^{Rt'} \partial_z u_\phi\|^2_{\widetilde{L}^2_{T,\theta'(t)}(B^{s+\frac{1}{2}})} , 
\end{align*}
where $\tilde{d_q}$ is given by \eqref{dq tilde2}.

\noindent
$\bullet$ Estimate of $\mathcal{B}_{q} = \int_0^T  |<e^{Rt'}\Delta^h_q(T^h_{\partial_x  u}\partial_z u)_\phi,e^{Rt'}\Delta_q^h \partial_z u_\phi>|\,dt'$. 
\begin{align*}
\mathcal{B}_{q} 
\lesssim& \sum_{|q'-q|\leq 4 } \int_0^T e^{2Rt'} \|S_{q'-1}^h \partial_x u^+_\phi\|_{L^{\infty}}\,\|\Delta^h_{q'} \partial_z u_\phi\|_{L^2} \|\Delta^h_q \partial_z u_\phi \|_{L^2} \,dt
\\
\lesssim& \sum_{|q'-q|\leq 4 }  \int_0^T e^{2Rt'} 2^{q'}\| \partial^2_z u_\phi \|_{B^{\frac{1}{2}}}\,\|\Delta^h_{q'} \partial_z u_\phi\|_{L^2} \|\Delta^h_q \partial_zu_\phi \|_{L^2} \,dt'
\\
\lesssim& \sum_{|q'-q|\leq 4 }2^{q'} \left(\int_0^T e^{2Rt'} \|\partial^2_z u_\phi\|_{B^\frac{1}{2}}\,\|\Delta^h_{q'}  \partial_z u_\phi\|^2_{L^2}\right)^{\frac{1}{2}}
 \times \left(\int_0^T e^{2Rt'} \|\partial^2_z u_\phi\|_{B^\frac{1}{2}}\, \|\Delta^h_q \partial_z u_\phi \|^2_{L^2}\right)^{\frac{1}{2}} 
\\
\lesssim& 2^{-2qs} \tilde{d_q} \, \|e^{Rt'} \partial_z u_\phi\|^2_{\widetilde{L}^2_{T,\theta'(t)}(B^{s+\frac{1}{2}})} . 
\end{align*}
\noindent
$\bullet$ Estimate of $\mathcal{R}_{q} = \int_0^T |<e^{Rt'}\Delta^h_q(R^h(\partial_z u,\partial_x  u))_\phi,e^{Rt'}\Delta_q^h \partial_z u_\phi>|\,dt'$. 
\begin{align*}
\mathcal{R}_{q} & \lesssim \sum_{q'\geq q-3 } \int_0^T e^{2Rt'} \|\widetilde{\Delta}^h_{q'} \partial_z u^+_\phi\|_{L^2}\,\|\Delta^h_{q'}\partial_x  u_\phi\|_{L_z^{\infty}(L_x^2)}  \|\Delta^h_q \partial_z u_\phi \|_{{L_z^{2}(L_x^{\infty})} } \,dt'
\\
& \lesssim \sum_{q'\geq q-3 } \int_0^T 2^{-\frac{q'}{2}}\, d_{q'}(\partial_z u_\phi) e^{2Rt'} \,\| \partial^2_z u_\phi\|_{B^{\frac{1}{2}}}\,2^{q'}\|\Delta^h_{q'} \partial_z u_\phi\|_{L^2}\,2^{\frac{q}{2}}\|\Delta^h_{q} \partial_z u_\phi\|_{L^2} \,dt'
\\
& \lesssim 2^{\frac{q}{2}} \sum_{q'\geq q-3 } \int_0^T 2^{\frac{q'}{2}}\,  \|\partial^2_z u_\phi\|_{B^{\frac{1}{2}}}\,\|\Delta^h_{q'} \partial_z u_\phi\|_{L^2}\,\|\Delta^h_{q} \partial_z u_\phi\|_{L^2} \,dt'
\\
& \lesssim 2^{\frac{q}{2}}\sum_{q'\geq q-3 } 2^{\frac{q'}{2}}\left(\int_0^T e^{2Rt'} \|\partial^2_z u_\phi\|_{B^\frac{1}{2}}\,\|\Delta^h_{q'} \partial_z u_\phi\|^2_{L^2}\right)^{\frac{1}{2}}
 \times \left(\int_0^T e^{2Rt'} \|\partial^2_z u_\phi\|_{B^\frac{1}{2}}\, \|\Delta^h_q\partial_z u_\phi \|^2_{L^2}\right)^{\frac{1}{2}}
 \\
& \lesssim 2^{-2qs} \check {d_q} \, \|e^{Rt'} \partial_z u_\phi\|^2_{\widetilde{L}^2_{T,\theta'(t)}(B^{s+\frac{1}{2}})} , 
\end{align*}
where $\check {d_q} $ is given by \eqref{dq check 2}.

\noindent
$\bullet$  Summing up the above estimates for $\mathcal{A}_q$, $\mathcal{B}_q$ and $\mathcal{R}_q$, we obtain ($\ref{estimate_3}$).
\end{proof}

\begin{proof}[Proof of Estimate $(\ref{estimate_4})$]
We apply the Bony's decomposition with $a= \partial_z u$, $b=\partial_x \partial_z u$ and $f=u_\varphi$.
Then following the notations of \eqref{int abf} we can write
\begin{align*}
\int_0^T |<e^{Rt'} \Delta^h_q( \partial_z  u \, \partial_x \partial_z u)_\phi, e^{Rt'} \Delta^h_q  u_\phi>|dt' 
  \leq \mathcal{A}_{q}+\mathcal{B}_{q}+\mathcal{R}_{q}.
\end{align*}
\noindent
$\bullet$ Estimate of $\mathcal{A}_{q} = \int_0^T |<e^{Rt'}\Delta^h_q(T^h_{\partial_z u}\partial_x \partial_z u)_\phi,e^{Rt'}\Delta_q^h u_\phi>|\,dt'$.
\begin{align*}
\mathcal{A}_{q}
\lesssim \sum_{|q'-q|\leq 4 } \int_0^T 
  e^{2Rt'} \|S_{q'-1}^h \partial_zu^+_\phi\|_{L^2_z(L_x^{\infty})} \, 
  \|\Delta^h_{q'} \partial_x \partial_z u_\phi\|_{L^2} \,  
  \|\Delta^h_q u_\phi \|_{L_z^{\infty}(L_x^2)}  . 
\end{align*}
Using   Lemma \ref{lem bernstein} and Poincar\'e inequality, we have
\begin{align*}
\|\Delta^h_q \partial_zu^+_\phi\|_{L^2_z(L^{\infty}_x)} &\lesssim 2^{\frac{q}{2}} \|\Delta^h_q \partial_zu^+_\phi\|_{L^2}
%\\
%& \lesssim  \|\partial^2_z u_\phi\|_{L^2}
\\
& \lesssim d_q(\partial^2_z u_\phi)  \|\partial^2_z u_\phi\|_{B^\frac{1}{2}} . 
\end{align*}
Then
\begin{equation*}
 \|S_{q'-1}^h \partial_zu^+_\phi\|_{L^2_z(L_x^{\infty})}\lesssim  \|\partial^2_z u_\phi\|_{B^\frac{1}{2}} . 
\end{equation*}
By Poincar\'e inequality  we have
\begin{equation*}
 \|\Delta^h_q u_\phi \|_{L_z^{\infty}(L_x^2)} \lesssim  \|\Delta^h_q \partial_z u_\phi\|_{L^2}  . 
\end{equation*}
Then
\begin{align*}
\int_0^T |<e^{Rt'}&\Delta^h_q(T^h_{\partial_z u}\partial_x \partial_z u)_\phi,e^{Rt'}\Delta_q^h u_\phi>|\,dt 
\\
 & \lesssim \sum_{|q'-q|\leq 4 } \int_0^T e^{2Rt'} 
    \|\partial^2_z u_\phi\|_{B^\frac{1}{2}}\,
    \left(2^{q'} \|\Delta^h_{q'}\partial_z u_\phi\|_{L^2}\right)\,
    \|\Delta^h_q \partial_z u_\phi\ \|_{L^2}  
 \\
 & \lesssim \sum_{|q'-q|\leq 4 }2^{q'} \left(\int_0^T e^{2Rt'} \|\partial^2_z u_\phi\|_{B^\frac{1}{2}}\,\|\Delta^h_{q'}  \partial_z u_\phi\|^2_{L^2}\right)^{\frac{1}{2}}
 \times \left(\int_0^T e^{2Rt'} \|\partial^2_z u_\phi\|_{B^\frac{1}{2}}\, \|\Delta^h_q \partial_z u_\phi \|^2_{L^2}\right)^{\frac{1}{2}} 
\\
& \lesssim 2^{-2qs} \tilde{d_q} \, \|e^{Rt'} \partial_z u_\phi\|^2_{\widetilde{L}^2_{T,\theta'(t)}(B^{s+\frac{1}{2}})} , 
\end{align*}
where $\tilde{d_q}$ is defined by \eqref{dq tilde2}.

\noindent
$\bullet$ Estimate of 
$\mathcal{B}_{q} = \int_0^T |<e^{Rt'}\Delta^h_q(T^h_{\partial_x \partial_z u}\partial_z u)_\phi,e^{Rt'}\Delta_q^h u_\phi>|\,dt'$. 
\begin{align*}
\mathcal{B}_{q}
\lesssim& \sum_{|q'-q|\leq 4 } \int_0^T 
  e^{2Rt'} \|S_{q'-1}^h \partial_x\partial_z u^+_\phi\|_{L^2_z(L_x^{\infty})} \,
  \|\Delta^h_{q'} \partial_z u_\phi\|_{L^2} \, 
  \|\Delta^h_q u_\phi \|_{L_z^{\infty}(L_x^2} )\,dt' . 
\end{align*}
We have
\begin{align*}
\|S_{q'-1}^h \partial_x\partial_z u^+_\phi\|_{L^2_z(L_x^{\infty})}
& \lesssim \sum_{l\leq q'-2} 2^{\frac{3l}{2}}\|\Delta^h_l \partial_z u_\phi^+ \|_{L^2}
\\
&=\sum_{l\leq q'-2} 2^{\frac{3l}{2}}\|\Delta^h_l \partial_z u_\phi \|_{L^2}
\\
&\lesssim\sum_{l\leq q'-2} 2^{l} \,2^{\frac{l}{2}}\|\Delta^h_l \partial^2_z u_\phi \|_{L^2}
\\
& \lesssim \sum_{l\leq q'-2} 2^{l} \,d_{l}(\partial^2_zu_\phi)\| \partial^2_z u_\phi \|_{B^{\frac{1}{2}}}
\\
& \lesssim 2^{q'}\| \partial^2_z u_\phi \|_{B^{\frac{1}{2}}} , 
\end{align*}
and by the Poincar\'e inequality, we get  
\begin{equation*}
 \|\Delta^h_q u_\phi \|_{L_z^{\infty}(L_x^2)} \lesssim  \|\Delta^h_q \partial_z u_\phi\|_{L^2}  . 
\end{equation*}
From the previous estimates,  
\begin{align*}
\mathcal{B}_{q}
\lesssim& \sum_{|q'-q|\leq 4 }  \int_0^T e^{2Rt'} 
  \|S_{q'-1}^h \partial_x\partial_z u^+_\phi\|_{L^2_z(L_x^{\infty})}\,
  \|\Delta^h_{q'} \partial_z u_\phi\|_{L^2} 
  \|\Delta^h_q u_\phi \|_{L_z^{\infty}(L_x^2} )\,dt'
\\
\lesssim& \sum_{|q'-q|\leq 4 }  \int_0^T e^{2Rt'} 2^{q'} 
    \|\partial^2_z u_\phi \|_{B^{\frac{1}{2}}}\,
    \|\Delta^h_{q'} \partial_z u_\phi\|_{L^2} 
    \|\Delta^h_q \partial_zu_\phi \|_{L^2} \,dt'
\\
\lesssim& \sum_{|q'-q|\leq 4 }2^{q'} 
  \left(\int_0^T e^{2Rt'} \|\partial^2_z u_\phi\|_{B^\frac{1}{2}}\,\|\Delta^h_{q'}  \partial_z u_\phi\|^2_{L^2}\right)^{\frac{1}{2}}
  \times 
  \left(\int_0^T e^{2Rt'} \|\partial^2_z u_\phi\|_{B^\frac{1}{2}}\, \|\Delta^h_q \partial_z u_\phi \|^2_{L^2}\right)^{\frac{1}{2}} 
\\
\lesssim& 2^{-2qs}\tilde{d_q} \, \|e^{Rt'} \partial_z u_\phi\|^2_{\widetilde{L}^2_{T,\theta'(t)}(B^{s+\frac{1}{2}})} . 
\end{align*}
Then 
\begin{align*}
\mathcal{B}_{q} 
\lesssim 2^{-2qs}\tilde{d_q} \, \|e^{Rt'} \partial_z u_\phi\|^2_{\widetilde{L}^2_{T,\theta'(t)}(B^{s+\frac{1}{2}})} . 
\end{align*}
\noindent
$\bullet$ Estimate of $\mathcal{R}_{q} = \int_0^T |<e^{Rt'}\Delta^h_q(R^h(\partial_z u,\partial_x \partial_z u))_\phi,e^{Rt'}\Delta_q^h u_\phi>|\,dt'$. 
\begin{align*}
\mathcal{R}_{q}
& \lesssim \sum_{q'\geq q-3 } \int_0^T e^{2Rt'} \|\widetilde{\Delta}^h_{q'} \partial_z u^+_\phi\|_{L^2}\,\|\Delta^h_{q'}\partial_x \partial_z u_\phi\|_{L^2}  \|\Delta^h_q u_\phi \|_{L^{\infty}} \,dt'
\\
& \lesssim \sum_{q'\geq q-3 } \int_0^T 2^{-\frac{q'}{2}}\, d_{q'}(\partial_z u_\phi) e^{2Rt'} \,\| \partial^2_z u_\phi\|_{B^{\frac{1}{2}}}\,2^{q'}\|\Delta^h_{q'} \partial_z u_\phi\|_{L^2}\,2^{\frac{q}{2}}\|\Delta^h_{q} \partial_z u_\phi\|_{L^2} \,dt'
\\
& \lesssim 2^{\frac{q}{2}} \sum_{q'\geq q-3 } \int_0^T 2^{\frac{q'}{2}}\,  \|\partial^2_z u_\phi\|_{B^{\frac{1}{2}}}\,\|\Delta^h_{q'} \partial_z u_\phi\|_{L^2}\,\|\Delta^h_{q} \partial_z u_\phi\|_{L^2} \,dt'
\\
& \lesssim 2^{\frac{q}{2}}\sum_{q'\geq q-3 } 2^{\frac{q'}{2}}\left(\int_0^T e^{2Rt'} \|\partial^2_z u_\phi\|_{B^\frac{1}{2}}\,\|\Delta^h_{q'} \partial_z u_\phi\|^2_{L^2}\right)^{\frac{1}{2}}
 \times \left(\int_0^T e^{2Rt'} \|\partial^2_z u_\phi\|_{B^\frac{1}{2}}\, \|\Delta^h_q\partial_z u_\phi \|^2_{L^2}\right)^{\frac{1}{2}}
 \\
& \lesssim 2^{-2qs}\check{d_q} \, \|e^{Rt'} \partial_z u_\phi\|^2_{\widetilde{L}^2_{T,\theta'(t)}(B^{s+\frac{1}{2}})} , 
\end{align*}
where $\check{d_q}$ is given by \eqref{dq check 2}.

\noindent
$\bullet$  By summing the three estimates for $\mathcal{A}_q$, $\mathcal{B}_q$ and $\mathcal{R}_q$, we obtain ($\ref{estimate_4}$).
\end{proof}

{\color{\LEO} % =====

\begin{proof}[Proof of Estimate $(\ref{estimate_5})$]
We apply the Bony's decomposition with $a = v$, $b = \partial_z u$ and $f=u_\varphi$.
Then following the notations of \eqref{int abf} we can write
\begin{align*}
\int_0^T |<e^{Rt'} \Delta^h_q( v \, \partial_z u)_\phi, e^{Rt'} \Delta^h_q  u_\phi>|dt' 
  \leq \mathcal{A}_{q}+\mathcal{B}_{q}+\mathcal{R}_{q}.
\end{align*}
\noindent
$\bullet$ Estimate of $\mathcal{A}_{q} = \int_0^T |<e^{Rt'}\Delta^h_q(T^h_{v}\partial_z  u)_\phi,e^{Rt'}\Delta_q^h  u_\phi>|\,dt'$.
From corollary \ref{cor Dq Sq}, 
\begin{align*}
\mathcal{A}_{q}
 \lesssim \sum_{|q'-q|\leq 4 } \int_0^T e^{2Rt'} 
 \|S_{q'-1}^h v^+_\phi\|_{L^{\infty}}\,\|\Delta^h_{q'} \partial_z u_\phi\|_{L^2} \|\Delta^h_q  u_\phi \|_{L^2}.
\end{align*}
From the incompressibility condition (see remark \ref{rem incompressible}) and the definition of $f_\phi$ we get 
\[\partial_x u_\phi+\partial_z v_\phi=0.\] Since $v_\phi|_{z=0}=0$,
then
\begin{align*}
v_\phi (t,x,z)=-\int_0^z \partial_x u_\phi(t,x ,z')dz'
\quad\text{and}\quad
\Delta_q^h v_\phi (t,x,z)=-\int_0^z \partial_x \Delta_q^h  u_\phi(t,x ,z')dz' . 
\end{align*}
Therefore
from lemma \ref{lem bernstein} we get 
\begin{align}\label{estim v}
\|\Delta^h_{q} v_\phi\|_{L_z^{\infty}(L_x^2)}& \leq \int_0^1  \|\partial_x \Delta_{q}^h  u_\phi(t,. ,z')\|_{L^{2}_x}dz'\notag
\\
&\lesssim \int_0^1 2^{q}  \|\Delta_{q}^h  u_\phi(t,. ,z')\|_{L^{2}_x}dz'  \notag
\\
&\lesssim 2^{q}  \|\Delta_{q}^h  u_\phi\|_{L^{2}} . 
\end{align}
Again by Lemma \ref{lem bernstein} and the properties of \eqref{f^+} of $f^+$, and the previous inquality we have 
\begin{align} \label{Bernstien v+}
\|\Delta_q^h v^+_\phi \|_{L^{\infty}} 
  &\lesssim 2^{\frac{q}{2}}\|\Delta_q^h v_\phi \|_{L_z^{\infty}(L^2_x)}
   \lesssim 2^{\frac{3q}{2}}\|\Delta_q^h u_\phi \|_{L^2} . 
\end{align}
Then from the Poincar\'e inequality and the remark \ref{rem dq}, 
\begin{align*}
\|\Delta_q^h v^+_\phi \|_{L^{\infty}}
 \lesssim 2^{\frac{3q}{2}}\|\Delta_q^h \partial^2_z u_\phi\|_{L^{2}} \lesssim 2^{q} \,d_{q}(\partial^2_zu_\phi)\|\partial^2_zu_\phi \|_{B^{\frac{1}{2}}} . 
\end{align*}
Then
\begin{align*}
\|S_{q'-1}^h v^+_\phi\|_{L^{\infty}} 
& \lesssim \sum_{l\leq q'-2} 2^{l} \,d_{l}(\partial_z^2 u_\phi)\| \partial^2_zu_\phi \|_{B^{\frac{1}{2}}}
\lesssim 2^{q'}\| \partial^2_z u_\phi \|_{B^{\frac{1}{2}}}. 
\end{align*}
From the previous estimates and lemma \ref{lem bernstein}, 
\begin{align*}
\mathcal{A}_{q}
 \lesssim  & \sum_{|q'-q|\leq 4 } \int_0^T e^{2Rt'} 
  \left(2^{q'} \|\partial^2_z u_\phi \|_{B^{\frac{1}{2}}}\right) \,
  \|\Delta^h_{q'} \partial_z u_\phi\|_{L^2} \|\Delta^h_q  u_\phi \|_{L^2}
 \\
\lesssim& \sum_{|q'-q|\leq 4 }2^{q'} \left(\int_0^T e^{2Rt'} \|\partial^2_z u_\phi\|_{B^\frac{1}{2}}\,\|\Delta^h_{q'}  \partial_z u_\phi\|^2_{L^2}\right)^{\frac{1}{2}}
 \times \left(\int_0^T e^{2Rt'} \|\partial^2_z u_\phi\|_{B^\frac{1}{2}}\, \|\Delta^h_q \partial_z u_\phi \|^2_{L^2}\right)^{\frac{1}{2}} 
\\
\lesssim& 2^{-2qs}\tilde{d_q} \, \|e^{Rt'} \partial_z u_\phi\|^2_{\widetilde{L}^2_{T,\theta'(t)}(B^{s+\frac{1}{2}})} , 
\end{align*}
where $\tilde{d}_q$ is given by \eqref{dq tilde2}.

\noindent
$\bullet$ Estimate of $\mathcal{B}_{q} = \int_0^T |<e^{Rt'}\Delta^h_q(T^h_{\partial_z  u} v)_\phi,e^{Rt'}\Delta_q^h u_\phi>|\,dt'$.
\begin{align*}
\mathcal{B}_{q}
 & \lesssim \sum_{|q'-q|\leq 4 } \int_0^T e^{2Rt'} \|S_{q'-1}^h \partial_z  u^+_\phi\|_{L^2_z(L_x^{\infty})}\,\|\Delta^h_{q'} v_\phi\|_{L_z^{\infty}(L_x^2)} \|\Delta^h_q  u_\phi \|_{L^2}\, dt' 
\\
 & \lesssim \sum_{|q'-q|\leq 4 } \int_0^T e^{2Rt'} 
   \|\partial^2_z u_\phi \|_{B^{\frac{1}{2}}}\,
    \left(2^{q'}  \|\Delta^h_{q'} u_\phi\|_{L^2} \right) 
    \|\Delta^h_q  u_\phi \|_{L^2}
\\
&\lesssim \sum_{|q'-q|\leq 4 }2^{q'} \left(\int_0^T e^{2Rt'} \|\partial^2_z u_\phi\|_{B^\frac{1}{2}}\,\|\Delta^h_{q'}\partial_z  u_\phi\|^2_{L^2}\right)^{\frac{1}{2}}
 \times \left(\int_0^T e^{2Rt'} \|\partial^2_z u_\phi\|_{B^\frac{1}{2}}\, \|\Delta^h_q \partial_z u_\phi \|^2_{L^2}\right)^{\frac{1}{2}} 
\\
&\lesssim 2^{-2qs}\tilde{d_q} \, \|e^{Rt'}\partial_z u_\phi\|^2_{\widetilde{L}^2_{T,\theta'(t)}(B^{s+\frac{1}{2}})}. 
\end{align*}
\noindent
$\bullet$ Estimate of $\mathcal{R}_{q} = \int_0^T |<e^{Rt'}\Delta^h_q(R^h(v,\partial_z  u))_\phi,e^{Rt'}\Delta_q^h u_\phi>|\,dt'$.
\begin{align*}
\mathcal{R}_{q}
 & \lesssim  \sum_{q'\geq q- 3 }  \int_0^T e^{2Rt'}\, 
 \|\Delta^h_{q'} v_\phi\|_{L_z^{\infty}(L_x^2)}
 \|\widetilde{\Delta}^h_{q'} \partial_z  u_\phi\|_{L^2}\,
 \|\Delta^h_q  u_\phi \|_{L^2_z(L^{\infty}_x)}\, dt' . 
\end{align*}
From Estimate \eqref{estim v} we derive 
%
%By divergence-free condition, and Lemma \ref{lem bernstein}, we have
%
\begin{align*}
\|\Delta^h_{q'} v_\phi\|_{L_z^{\infty}(L_x^2)}& \leq \int_0^1  \|\partial_x \Delta_{q'}^h  u_\phi(t,. ,z')\|_{L^{2}_x}dz'
\\
&\lesssim 2^{q'} \int_0^1  \|\Delta_{q'}^h  u_\phi(t,. ,z')\|_{L^{2}_x}dz'
\\
&\lesssim 2^{q'}  \|\Delta_{q'}^h  u_\phi\|_{L^{2}} . 
\end{align*}
Then, we have
\begin{align*}  
\mathcal{R}_{q}
& \lesssim  \sum_{q'\geq q- 3 }  \int_0^T e^{2Rt'}\, \|\Delta^h_{q'} v_\phi\|_{L_z^{\infty}(L_x^2)}
    \|\widetilde{\Delta}^h_{q'} \partial_z  u_\phi\|_{L^2}\,
    2^{\frac{q}{2}} \|\Delta^h_q  u_\phi \|_{L^2}\, dt' 
\\
& \lesssim 2^{\frac{q}{2}} \sum_{q'\geq q- 3 }  
  \int_0^T e^{2Rt'}\,2^{q'}  \|\Delta_{q'}^h  u_\phi\|_{L^{2}} \,
  2^{-\frac{q'}{2}}d_{q'}(\partial^2_z u_\phi)  \,
  \| \partial^2_z u_\phi\|_{B^{\frac{1}{2}}}\,\|\Delta^h_q  u_\phi \|_{L^2}\, dt' 
\\
& \lesssim 2^{\frac{q}{2}} \sum_{q'\geq q- 3 }  
  \int_0^T e^{2Rt'}\,2^{\frac{q'}{2}}  \|\Delta_{q'}^h  u_\phi\|_{L^{2}}  \,
  \| \partial^2_z u_\phi\|_{B^{\frac{1}{2}}}\,\|\Delta^h_q  u_\phi \|_{L^2}\, dt' 
\\
& \lesssim 2^{\frac{q}{2}}\sum_{q'\geq q-3 } 2^{\frac{q'}{2}}\left(\int_0^T e^{2Rt'} 
 \|\partial^2_z u_\phi\|_{B^\frac{1}{2}}\,\|\Delta^h_{q'} \partial_z  u_\phi\|^2_{L^2}\right)^{\frac{1}{2}}
 \times 
 \left(\int_0^T e^{2Rt'} \|\partial^2_z u_\phi\|_{B^\frac{1}{2}}\, 
 \|\Delta^h_q \partial_z u_\phi \|^2_{L^2}\right)^{\frac{1}{2}}
 \\
& \lesssim 2^{-2qs}\check{d_q} \, \|e^{Rt'} \partial_z u_\phi\|^2_{\widetilde{L}^2_{T,\theta'(t)}(B^{s+\frac{1}{2}})} , 
\end{align*}
where $\check{d_q}$ is given by \eqref{dq check 2}.

\noindent
$\bullet$  By summing up the above three estimates for $\mathcal{A}_q$, $\mathcal{B}_q$ and $\mathcal{R}_q$, we obtain ($\ref{estimate_5}$).
\end{proof}

}%===== END COLOR LEO 

\begin{proof}[Proof of Estimate $(\ref{estimate_6})$]

We apply the Bony's decomposition with $a = v$, $b = \partial_z^2 u$ and $f = \partial_z u_\phi$.
Then following the notations of \eqref{int abf} we can write
\begin{align*}
\int_0^T |<e^{Rt'} \Delta^h_q( v \, \partial_z^2 u)_\phi, e^{Rt'} \Delta^h_q  \partial_z u_\phi>|dt' 
  \leq \mathcal{A}_{q}+\mathcal{B}_{q}+\mathcal{R}_{q}.
\end{align*}
\noindent
$\bullet$ Estimate of $\mathcal{A}_{q} = \int_0^T |<e^{Rt'}\Delta^h_q(T^h_{v}\partial^2_z  u)_\phi,e^{Rt'}\Delta_q^h \partial_z u_\phi>|\,dt'$.  
By  Poincar\'e inequality we have
\begin{align*}
 \|\Delta_q^h  u_\phi\|_{L^{2}}
 \lesssim \|\Delta_q^h \partial_z  u_\phi\|^{\frac{1}{2}}_{L^2}\,\|\Delta_q^h \partial_z ^2u_\phi\|^{\frac{1}{2}}_{L^2} . 
\end{align*}
Then from \eqref{Bernstien v+} and the previous estimate,  and remark \ref{rem dq} we get
\begin{align*}
  \|\Delta_q^h v^+_\phi \|_{L^{\infty}} &\lesssim 2^{\frac{3q}{2}} 
  \|\Delta_q^h \partial_z  u_\phi\|^{\frac{1}{2}}_{L^2}\,\|\Delta_q^h \partial_z ^2u_\phi\|^{\frac{1}{2}}_{L^2} \\
& \lesssim  2^{\frac{q}{2}} d_q(\partial_z u_\phi)^\frac{1}{2} 
  \|\partial_z  u_\phi\|^{\frac{1}{2}}_{B^{\frac{3}{2}}}\,d_q(\partial_z ^2u_\phi)^\frac{1}{2} 
  \|\partial_z ^2u_\phi\|^{\frac{1}{2}}_{B^{\frac{1}{2}}} . 
\end{align*}
Then we have
\begin{align*}
\|S_{q'-1}^h v^+_\phi\|_{L^{\infty}} 
& \lesssim  2^{\frac{q'}{2}}
  \|\partial_z  u_\phi\|^{\frac{1}{2}}_{B^{\frac{3}{2}}}\,
  \| \partial_z ^2u_\phi\|^{\frac{1}{2}}_{B^{\frac{1}{2}}} . 
\end{align*}
Therefore
\begin{align*}
\mathcal{A}_{q} 
& \lesssim \sum_{|q'-q|\leq 4 } \int_0^T e^{2Rt'} 
   \|S_{q'-1}^h v^+_\phi\|_{L^{\infty}}\,
   \|\Delta^h_{q'} \partial^2_z u_\phi\|_{L^2} 
   \|\Delta^h_q \partial_z u_\phi \|_{L^2}  
 \\
& \lesssim \sum_{|q'-q|\leq 4 } 2^{\frac{q'}{2}} \int_0^T e^{Rt'}\, 
   \|\partial_z  u_\phi\|^{\frac{1}{2}}_{B^{\frac{3}{2}}}\|\Delta^h_{q'} \partial^2_z u_\phi\|_{L^2} \,
   e^{Rt'} \| \partial_z ^2u_\phi\|^{\frac{1}{2}}_{B^{\frac{1}{2}}} 
   \|\Delta^h_q \partial_z u_\phi \|_{L^2}
 \\ 
&\lesssim \sum_{|q'-q|\leq 4 } 2^{\frac{q'}{2}} 
  \left(\int_0^T e^{2Rt'}\|\partial_z  u_\phi\|_{B^{\frac{3}{2}}}\|\Delta^h_{q'} \partial^2_z u_\phi\|^2_{L^2}\right)^{\frac{1}{2}}
 \times 
 \left(\int_0^T e^{2Rt'} \,\| \partial^2_z u_\phi\|_{B^{\frac{1}{2}}} \|\Delta^h_q \partial_z u_\phi \|^2_{L^2}\right)^{\frac{1}{2}} 
  \\
&\lesssim \sum_{|q'-q|\leq 4 } 2^{\frac{q'}{2}}
  \|\partial_z  u_\phi\|^{\frac{1}{2}}_{L^{\infty}_T({B^{\frac{3}{2}}})} 
 \left(\int_0^T e^{2Rt'}\|\Delta^h_{q'} \partial^2_z u_\phi\|^2_{L^2}\right)^{\frac{1}{2}}
 \times 
 \left(\int_0^T e^{2Rt'} \,\| \partial^2_z u_\phi\|_{B^{\frac{1}{2}}} \|\Delta^h_q \partial_z u_\phi \|^2_{L^2}\right)^{\frac{1}{2}} 
  \\
&\lesssim 2^{-2qs}\hat{d_q} \,
  \|\partial_z  u_\phi\|^{\frac{1}{2}}_{L^{\infty}_T({B^{\frac{3}{2}}})}\, 
  \|e^{Rt'} \partial^2_z u_\phi\|_{\widetilde{L}^2_{T}(B^s)}\, 
  \|e^{Rt'} \partial_z u_\phi\|_{\widetilde{L}^2_{T,\theta'(t)}(B^{s+\frac{1}{2}})} , 
\end{align*}
where 
\begin{align*}
\hat{d_q}=d_q(\partial^2_z u_\phi,1)) \left(\sum_{|q'-q|\leq 4 } d_{q'}(\partial_z u_\phi,\theta') 2^{(q-q')(s-\frac{1}{2})} \right). 
\end{align*}
\noindent
$\bullet$ Estimate of $\mathcal{B}_{q} = \int_0^T |<e^{Rt'}\Delta^h_q(T^h_{\partial^2_z  u} v)_\phi,e^{Rt'}\Delta_q^h \partial_z u_\phi>|\,dt'$.
\begin{align*}\mathcal{B}_{q}
&\lesssim \sum_{|q'-q|\leq 4 } \int_0^T e^{2Rt'} 
  \|S_{q'-1}^h \partial^2_z  u^+_\phi\|_{L^2_z(L_x^{\infty})}\,
  \|\Delta^h_{q'} v_\phi\|_{L_z^{\infty}(L_x^2)} 
  \|\Delta^h_q \partial_z u_\phi \|_{L^2}\, dt' 
\\
&\lesssim \sum_{|q'-q|\leq 4 } \int_0^T e^{2Rt'} \,
  \| \partial^2_z u_\phi\|_{B^{\frac{1}{2}}}\,\left(2^{q'} 
  \|\Delta^h_{q'} \partial_z u_\phi\|_{L^2} \right) \|\Delta^h_q \partial_z u_\phi \|_{L^2}\, dt' 
\\
&\lesssim \sum_{|q'-q|\leq 4 } 2^{q'} \left(\int_0^T e^{2Rt'}\, 
  \|\partial^2_z u_\phi\|_{B^{\frac{1}{2}}}\,\|\Delta^h_{q'} \partial_z u_\phi\|^2_{L^2}\right)^{\frac{1}{2}}
 \times \left(\int_0^T e^{2Rt'} \,\| \partial^2_z u_\phi\|_{B^{\frac{1}{2}}}\, \|\Delta^h_q \partial_z u_\phi \|^2_{L^2}\right)^{\frac{1}{2}}
\\
&\lesssim 2^{-2qs} \tilde{d_q} \, \|e^{Rt'} \partial_z u_\phi\|^2_{\widetilde{L}^2_{T,\theta'(t)}(B^{s+\frac{1}{2}})} ,
\end{align*}
where $\tilde{d_q}$ is given by $\eqref{dq tilde2}$. 

\noindent
$\bullet$ Estimate of $\mathcal{R}_{q} = \int_0^T |<e^{Rt'}\Delta^h_q(R^h(v,\partial^2_z  u))_\phi,e^{Rt'}\Delta_q^h \partial_z u_\phi>|\,dt'$. 
From corollary \ref{cor Dq Sq}, we have
\begin{align*}
\mathcal{R}_{q} 
 & \lesssim  \sum_{q'\geq q- 3 }  \int_0^T e^{2Rt'}\, \|\Delta^h_{q'} v_\phi\|_{L_z^{\infty}(L_x^2)}\|\widetilde{\Delta}^h_{q'} \partial^2_z  u_\phi\|_{L^2}\,\|\Delta^h_q \partial_z u_\phi \|_{L^2_z(L^\infty_x)}\, dt' . 
\end{align*}
From \eqref{estim v}, we have
\begin{align*}
\|\Delta^h_{q'} v_\phi\|_{L_z^{\infty}(L_x^2)}
\lesssim 2^{q'}  \|\Delta_{q'}^h \partial_z u_\phi\|_{L^{2}} . 
\end{align*}
Then, using remark \ref{rem dq} and lemma \ref{lem bernstein}, remark \ref {rem dqf}, we have for any $s>0$
\begin{align*}
\mathcal{R}_{q} 
&\lesssim  \sum_{q'\geq q- 3 }  \int_0^T e^{2Rt'}\, 2^{q'} 
  \|\Delta_{q'}^h \partial_z  u_\phi\|_{L^{2}}\,
  \left(2^{-\frac{q'}{2}}\,d_{q'}(\partial_z^2 u_\phi)\| \partial^2_z u_\phi\|_{B^{\frac{1}{2}}} \right)\, 
  2^{\frac{q}{2}} \|\Delta^h_q\partial_z  u_\phi \|_{L^2}\, dt' 
\\
& \lesssim 2^{\frac{q}{2}} \sum_{q'\geq q- 3 }\,
  2^{\frac{q'}{2}}\,  \int_0^T e^{2Rt'}\, 
  \|\Delta_{q'}^h \partial_z  u_\phi\|_{L^{2}}\,
  \| \partial^2_z u_\phi\|_{B^{\frac{1}{2}}}\,
  \|\Delta^h_q\partial_z  u_\phi \|_{L^2}\, dt' 
\\
&\lesssim 2^{\frac{q}{2}} \sum_{q'\geq q- 3 }
  2^{\frac{q'}{2}}\left(\int_0^T e^{2Rt'}\,
    \|\partial^2_z u_\phi\|_{B^{\frac{1}{2}}}\,
    \|\Delta^h_{q'} \partial_z u_\phi\|^2_{L^2}\right)^{\frac{1}{2}}
 \times 
 \left(\int_0^T e^{2Rt'} \,
   \| \partial^2_z u_\phi\|_{B^{\frac{1}{2}}}\, 
   \|\Delta^h_q \partial_z u_\phi \|^2_{L^2}
 \right)^{\frac{1}{2}}
 \\
&\lesssim 2^{-2qs} \check{d}_q\,
  \|e^{Rt'} \partial_z u_\phi\|^2_{\widetilde{L}^2_{T,\theta'(t)}(B^{s+\frac{1}{2}})} , 
\end{align*}
where $\check{d}$ is given by \eqref{dq check 2}.

\noindent
$\bullet$ Summing up the above three estimates, we obtain ($\ref{estimate_6}$).
\end{proof}

%\newpage

\section{Appendix - Proof of the lemma \ref{law product} }\label{app nonlin}

{\color{\LEO}%===== COLOR LEO

\begin{proof}[Proof of Estimate $(\ref{Es1})$]
We first recall the Bony's decomposition \eqref{Bony decomp}
\begin{equation*}
  u \partial_x u=T^h_u \partial_x u+ T^h_{\partial_x u} u+R^h(u,\partial_x u).
\end{equation*}
%
%where, $T^h_a b= \sum_{q \in \mathbb{Z}} S_{q-1}^h a\Delta^h_q b$, and     $R^h(a,b)=\sum_{|q-q'|\leq 1} \Delta^h_q a \Delta_{q'}^h b$.
%
Then
\begin{align*}
&\left(\int_0^T \| e^{Rt}\Delta^h_q(u\,\partial_{x} u)_{\phi}\|^2_{L^2}\, dt \right)^{\frac{1}{2}}
\\
&\lesssim  \left(\int_0^T \| e^{Rt}\Delta^h_q(T^h_u \partial_x u)_{\phi}\|^2_{L^2} dt \right)^{\frac{1}{2}} 
  + \left(\int_0^T \| e^{Rt}\Delta^h_q(T^h_{\partial_x u} u)_{\phi}\|^2_{L^2} dt \right)^{\frac{1}{2}}
  + \left( \int_0^T \| e^{Rt}\Delta^h_q(R^h(u,\partial_x u))_{\phi}\|^2_{L^2} dt \right)^{\frac{1}{2}}.
\end{align*}
\noindent
$\bullet$ Estimate of $\| e^{Rt}\Delta^h_q(T^h_u \partial_x u)_{\phi}\|_{L^2}$.
From the definition of $T_h$ we have
\begin{align*}%\label{ebase}
\| e^{Rt}\Delta^h_q(T^h_u \partial_x u)_{\phi}\|_{L^2}
&= \|e^{Rt} \Delta^h_q\Big(\sum_{q' \in \mathbb{Z}} 
  S_{q'-1}^h u\,\Delta^h_{q'} \partial_x u\Big)_\phi\|_{L^2}
&\lesssim \sum_{|q-q'|\leq 4} 
  \left \|e^{Rt} \Big( S_{q'-1}^h u\,\Delta^h_{q'} \partial_x u\Big)_\phi \right\|_{L^2} . 
\end{align*}
From corollary \ref{cor Dq Sq}, Poincarre inequality and lemma \ref{lem bernstein} we get 
\begin{align*} %\label{ebase}
 \int_0^T\| e^{Rt}\Delta^h_q(T^h_u \partial_x u)_{\phi}\|^2_{L^2} dt
&  \lesssim \sum_{|q-q'|\leq 4} \int_0^T\|S_{q'-1}^h u^+_\phi\|^2_{L^{\infty}}\,\|e^{Rt}\Delta^h_{q'} \partial_x u_\phi\|^2_{L^2}\, dt 
\\
& \lesssim  \sum_{|q-q'|\leq 4} \int_0^T  \|\Delta^h_{q'}\partial_z u_\phi\|^2_{B^\frac{1}{2}} \, 
    \left(2^{2q'} \|e^{Rt}\Delta^h_{q'} \partial_z u_\phi\|^2_{L^2}  \right) dt
    \\
& \lesssim  \sum_{|q-q'|\leq 4}  \|\Delta^h_{q'}\partial_z u_\phi\|^2_{L_T^{\infty}({B^\frac{1}{2}})} \, 
    \left(2^{2q'} \int_0^T \|e^{Rt}\Delta^h_{q'} \partial_z u_\phi\|^2_{L^2}\, dt \right) . 
\end{align*}
Then from the remark \ref{rem dqf}
\begin{align} \label{Es1a}
 \left(\int_0^T\| e^{Rt}\Delta^h_q(T^h_u \partial_x u)_{\phi}\|^2_{L^2} dt \right)^{\frac{1}{2}}
& \lesssim  \sum_{|q-q'|\leq 4} \|\Delta^h_{q'}\partial_z u_\phi\|_{L_T^{\infty}({B^\frac{1}{2}})} \, 
    2^{q'} \left(\int_0^T \|e^{Rt}\Delta^h_{q'} \partial_z u_\phi\|^2_{L^2} dt \right)^{\frac{1}{2}} \notag
\\
& \lesssim \sum_{|q-q'|\leq 4}  \|\partial_z u_\phi\|_{\widetilde{L}_T^\infty(B^\frac{1}{2})} 
  2^{q'} \, \left(  2^{-\frac{5q'}{2}} d_{q'}(\partial_z u_\phi,1) 
      \|e^{Rt}\Delta^h_{q'} \partial_z u_\phi\|_{\widetilde{L}_T^2(B^\frac{5}{2})}  \right)
      \\
& \lesssim \tilde{d}_{q} \, 2^{-\frac{3q}{2}}\|\partial_z u_\phi\|_{\widetilde{L}_T^\infty(B^\frac{1}{2})}   
    \|e^{Rt} \partial_z u_\phi\|_{\widetilde{L}_T^2(B^\frac{5}{2})} , \notag
\end{align}
where
\begin{align}\label{dq tilde3}
\tilde{d}_q = \sum_{|q-q'|\leq 4} d_{q'}(\partial_z u_\phi,1)  2^{\frac{3}{2}(q-q')}. 
\end{align}
\noindent
$\bullet$ Estimate of $\| e^{Rt}\Delta^h_q(T^h_{\partial_x u} u)_{\phi}\|_{L^2}$.
Following the previous estimates we get
\begin{align}\label{Es1b}
 \left(\int_0^T \| e^{Rt}\Delta^h_q(T^h_{\partial_x u} u)_{\phi}\|_{L^2}\, dt \right)^{\frac{1}{2}}
& \lesssim \tilde{d}_{q} \, 2^{-\frac{3q}{2}}\|\partial_z u_\phi\|_{\widetilde{L}_T^\infty(B^\frac{1}{2})}   
    \|e^{Rt} \partial_z u_\phi\|_{\widetilde{L}_T^2(B^\frac{5}{2})}.
\end{align}
\noindent
$\bullet$ Estimate of $\| e^{Rt}\Delta^h_q(R^h(u,\partial_x u))_{\phi}\|_{L^2}$.
From corollary \ref{cor Dq Sq}, we have 
\begin{align*} %\label{ebase2}
\left(\int_0^T \| e^{Rt}\Delta^h_q(R^h(u,\partial_x u))_{\phi}\|^2_{L^2}\, dt \right)^\frac12
& \lesssim  \sum_{q' \geq q-3} 
  \left(\int_0^T
   \|\widetilde{\Delta}^h_{q'} u_\phi \|^2_{L^2_z (L^\infty_x)}\, 
   \|e^{Rt} \Delta^h_{q'} \partial_x u_\phi \|^2_{L^\infty_z(L^2_x)} \, dt \right)^\frac12
      \\
& \lesssim  \sum_{|q-q'|\leq 4} 
    \left(\int_0^T  \|\Delta^h_{q'}\partial_z u_\phi\|^2_{B^\frac{1}{2}} \, 
    \left(2^{2q'} \|e^{Rt}\Delta^h_{q'} \partial_z u_\phi\|^2_{L^2}  \right) dt \right)^\frac12
      \\
& \lesssim  \sum_{|q-q'|\leq 4}  \|\Delta^h_{q'}\partial_z u_\phi\|_{L_T^{\infty}({B^\frac{1}{2}})} \, 
    2^{q'} \left( \int_0^T \|e^{Rt}\Delta^h_{q'} \partial_z u_\phi\|^2_{L^2}\, dt \right)^\frac12 . 
\end{align*}
Then from remark \ref{rem dqf}
\begin{align} \label{Es1c} % \label{ebase2}
 \left(\int_0^T \| e^{Rt}\Delta^h_q(R^h(u,\partial_x u))_{\phi}\|^2_{L^2}\, dt\ \right)^{\frac{1}{2}}
& \lesssim \check{d}_{q} 2^{-\frac{3q}{2}} \|\partial_z u_\phi\|_{\widetilde{L}_T^\infty(B^\frac{1}{2})}   
    \|e^{Rt} \partial_z u_\phi\|_{\widetilde{L}_T^2(B^\frac{5}{2})} , 
\end{align}
where
\begin{align}\label{dq check3}
\check{d}_q = \sum_{q' \geq q-3} d_{q'}(\partial_z u_\phi,1)  2^{\frac{3}{2}(q-q')}. 
\end{align}
\noindent
$\bullet$ By summing estimates \eqref{Es1a}, \eqref{Es1b} and \eqref{Es1c} we get  
\begin{align*}
 \left(\int_0^T \| e^{Rt}\Delta^h_q(u\,\partial_{x} u)_{\phi}\|_{L^2} \, dt \right)^{\frac{1}{2}} 
& \lesssim  (2\tilde{d}_{q}+\check{q}_q) 2^{-\frac{3q}{2}} \|\partial_z u_\phi\|_{\widetilde{L}_T^\infty(B^\frac{1}{2})}   
    \|e^{Rt} \partial_z u_\phi\|_{\widetilde{L}_T^2(B^\frac{5}{2})}.
\end{align*}
Multiplying  the previous inequality by $2^{\frac{3q}{2}}$ and taking the sum over $\mathbb{Z}$
we obtain \eqref{Es1}.
\end{proof}
}%===== END COLOR LEO
\begin{proof}[Proof of Estimate $(\ref{Es2})$]
We have
\begin{multline*}
\left(\int_0^T \|e^{Rt}  \Delta^h_q(u\,\partial_{x} \partial^2_z u)_{\phi}\|^2_{L^2}\, dt \right)^{\frac{1}{2}} 
 \lesssim  
 \left(\int_0^T \|e^{Rt}\Delta^h_q(T^h_u \partial_x \partial^2_z u)_{\phi}\|^2_{L^2} dt \right)^{\frac{1}{2}} 
 \\
 +  \left( \int_0^T \|e^{Rt}\Delta^h_q(T^h_{\partial_x \partial^2_z u} u)_{\phi}\|^2_{L^2} dt \right)^{\frac{1}{2}}
 + \left( \int_0^T \|e^{Rt}\Delta^h_q(R^h(u,\partial_x\partial^2_z u))_{\phi}\|^2_{L^2} dt \right)^{\frac{1}{2}}.
\end{multline*}
By adapting the proof of Estimate \eqref{Es1a} we get
\begin{align*}
\left(\int_0^T \|e^{Rt}\Delta^h_q(T^h_u \partial_x \partial^2_z u)_{\phi}\|^2_{L^2} dt \right)^{\frac{1}{2}}
& \lesssim \tilde{d}_{q} 2^{-\frac{3q}{2}} \|\partial_z u_\phi\|_{\widetilde{L}_T^\infty(B^\frac{1}{2})}  
	\|e^{Rt} \partial^2_z u_\phi\|_{\widetilde{L}_T^2(B^\frac{5}{2})} , 
\\
\left( \int_0^T \|e^{Rt}\Delta^h_q(T^h_{\partial_x \partial^2_z u} u)_{\phi}\|^2_{L^2} dt \right)^{\frac{1}{2}}
& \lesssim \tilde{d}_{q} 2^{-\frac{3q}{2}}\|e^{Rt} \partial^2_z u_\phi\|_{\widetilde{L}_T^2(B^\frac{1}{2})} 
 \|\partial_z u_\phi\|_{\widetilde{L}_T^\infty(B^\frac{5}{2})} , 
\end{align*}
where $\tilde{d}_q$ is given by \eqref{dq tilde3}. 
By adapting the proof of Estimate \eqref{Es1c} we get
\begin{align*}
 \left( \int_0^T \|e^{Rt}\Delta^h_q(R^h(u,\partial_x\partial^2_z u))_{\phi}\|^2_{L^2} dt \right)^{\frac{1}{2}}
& \lesssim \check{d}_{q} 2^{-\frac{3q}{2}} \|\partial_z u_\phi\|_{\widetilde{L}_T^\infty(B^\frac{1}{2})}  
	\|e^{Rt} \partial^2_z u_\phi\|_{\widetilde{L}_T^2(B^\frac{5}{2})} , 
\end{align*}
where $\check{d}_{q}$ is given by \eqref{dq check3}.
Thus by summing the previous estimates we derive
\begin{align*}
& \left(\int_0^T \|e^{Rt}\Delta^h_q(u\,\partial_{x} \partial^2_z u)_{\phi}\|^2_{L^2}\, dt \right)^{\frac{1}{2}}
\\
& \lesssim 2^{-\frac{3q}{2}} \left((\tilde{d}_{q}+\check{d}_{q})  \|\partial_z u_\phi\|_{\widetilde{L}_T^\infty(B^\frac{1}{2})}  
	\|e^{Rt} \partial^2_z u_\phi\|_{\widetilde{L}_T^2(B^\frac{5}{2})}+\tilde{d}_{q}\|e^{Rt} \partial^2_z u_\phi\|_{\widetilde{L}_T^2(B^\frac{1}{2})}   \| \partial_z u_\phi\|_{\widetilde{L}_T^\infty(B^\frac{5}{2})}\right) . 
\end{align*}
Multiplying  the previous inequality by $2^{\frac{3q}{2}}$ and  taking the sum over $\mathbb{Z}$, 
we obtain \eqref{Es2}. 
\end{proof}
\begin{proof}[Proof of Estimate $(\ref{Es3})$]
We have by Bony's decomposition in $x$ variable
\begin{multline*}
\left(\int_0^T \|e^{Rt}\Delta^h_q(v\,\partial_{z} u)_{\phi}\|^2_{L^2}\, dt \right)^{\frac{1}{2}}
 \lesssim  \left(\int_0^T \| e^{Rt}\Delta^h_q(T^h_v \partial_z u)_{\phi}\|^2_{L^2} dt \right)^{\frac{1}{2}} 
 \\
 + \left( \int_0^T \|e^{Rt}\Delta^h_q(T^h_{\partial_z u} v)_{\phi}\|^2_{L^2} dt \right)^{\frac{1}{2}}
 + \left( \int_0^T \|e^{Rt}\Delta^h_q(R^h(v,\partial_z u))_{\phi}\|^2_{L^2} dt \right)^{\frac{1}{2}}.
\end{multline*}
\noindent
$\bullet$ Estimate of $\left(\int_0^T \| e^{Rt}\Delta^h_q(T^h_v \partial_z u)_{\phi}\|^2_{L^2} dt \right)^{\frac{1}{2}} $.
\begin{align*} 
\int_0^T \| e^{Rt}\Delta^h_q(T^h_v \partial_z u)_{\phi}\|^2_{L^2} dt
& \leq \sum_{|q-q'|\leq 4} \int_0^T \|S_{q'-1}^h v^+_\phi\|^2_{L^{\infty}}\,\|e^{Rt}\Delta^h_{q'} \partial_z u_\phi\|^2_{L^2}\,dt
\\
& \lesssim \sum_{|q-q'|\leq 4} \int_0^T \|\partial_z u_\phi\|^2_{B^\frac{1}{2}} 2^{2q'} \|e^{Rt}\Delta^h_{q'} \partial_z u_\phi\|^2_{L^2} \notag
\\
 & \lesssim  \sum_{|q-q'|\leq 4}  \|\partial_z u_\phi\|^2_{L_T^{\infty}({B^\frac{1}{2}})} \, 
    \left(2^{2q'} \int_0^T \|e^{Rt}\Delta^h_{q'} \partial_z u_\phi\|^2_{L^2}\, dt \right) . \notag
\end{align*}
Similar to \eqref{Es1a} we get
\begin{align}\label{Es3a}
\left(\int_0^T \| e^{Rt}\Delta^h_q(T^h_v \partial_z u)_{\phi}\|^2_{L^2} dt \right)^{\frac{1}{2}}
\lesssim \tilde{d}_{q} \, 2^{-\frac{3q}{2}}\|\partial_z u_\phi\|_{\widetilde{L}_T^\infty(B^\frac{1}{2})}   
    \|e^{Rt} \partial_z u_\phi\|_{\widetilde{L}_T^2(B^\frac{5}{2})} , 
\end{align}
where $\tilde{d}_{q} $ is given by \eqref{dq tilde3}.

\noindent
$\bullet$ Estimate of $\left(\int_0^T \| e^{Rt}\Delta^h_q(T^h_{\partial_z u} v)_{\phi}\|^2_{L^2}\, dt \right)^{\frac{1}{2}}$. 
\begin{align*} 
\int_0^T \| e^{Rt}\Delta^h_q(T^h_{\partial_z u} v)_{\phi}\|^2_{L^2}\, dt
& \lesssim \sum_{|q-q'|\leq 4} \int_0^T \|S_{q'-1}^h 
  \partial_z  u^+_\phi\|^2_{L^2_z(L_x^{\infty})}\,\|e^{Rt}\Delta^h_{q'} v_\phi\|^2_{L_z^{\infty}(L_x^2)}\, dt 
 \\
& \lesssim \sum_{|q-q'|\leq 4} \int_0^T \|\partial_z u_\phi\|^2_{B^\frac{1}{2}} 2^{2q'} \|e^{Rt}\Delta^h_{q'} \partial_z u_\phi\|^2_{L^2}\, dt \notag
\\
 & \lesssim  \sum_{|q-q'|\leq 4}  \|\partial_z u_\phi\|^2_{L_T^{\infty}({B^\frac{1}{2}})} \, 
    \left(2^{2q'} \int_0^T \|e^{Rt}\Delta^h_{q'} \partial_z u_\phi\|^2_{L^2}\, dt \right). \notag
\end{align*}
Similar to \eqref{Es1a} we get
\begin{align}\label{Es3b}
\left(\int_0^T \| e^{Rt}\Delta^h_q(T^h_{\partial_z u} v)_{\phi}\|^2_{L^2}\, dt \right)^{\frac{1}{2}}
\lesssim \tilde{d}_{q} \, 2^{-\frac{3q}{2}}\|\partial_z u_\phi\|_{\widetilde{L}_T^\infty(B^\frac{1}{2})}   
    \|e^{Rt} \partial_z u_\phi\|_{\widetilde{L}_T^2(B^\frac{5}{2})} , 
\end{align}
where $\tilde{d}_{q} $ is given by \eqref{dq tilde3}.

\noindent
$\bullet$ Estimate of $ \| e^{Rt}\Delta^h_q(R^h(v,\partial_z u))_{\phi}\|_{L^2}$. 
\begin{align*}
 \int_0^T \| e^{Rt}\Delta^h_q(R^h(v,\partial_z u))_{\phi}\|^2_{L^2}\, dt
& \lesssim \sum_{q' \geq q-3} 
  \int_0^T \|\Delta^h_{q'} v_\phi\|^2_{L_z^{\infty}(L_x^2)} \, 
  \|e^{Rt}\widetilde{\Delta}^h_{q'} \partial_z  u_\phi\|^2_{L_z^2(L_x^{\infty})} \,dt
\\
& \lesssim \sum_{q' \geq q-3} \int_0^T
  \left( 2^{2q'} \|\Delta^h_{q'} \partial_z u_\phi\|^2_{L^2} \right) \, 
  \|\partial_z u_\phi\|^2_{B^\frac{1}{2}} \, dt \notag
\\
& \lesssim \sum_{q' \geq q-3} 2^{2q'}  \left( \int_0^T
     \|\Delta^h_{q'} \partial_z u_\phi\|^2_{L^2}\,dt \right) \, 
 \|\partial_z u_\phi\|^2_{L_T^{\infty}({B^\frac{1}{2}})} .  \notag  
\end{align*}
Similar to \eqref{Es1a} we get
\begin{align} \label{Es3c}
\left(\int_0^T \|e^{Rt}\Delta^h_q(R^h(v,\partial_z u))_{\phi}\|^2_{L^2}\, dt \right)^{\frac{1}{2}}
\lesssim \check{d}_{q} \, 2^{-\frac{3q}{2}}\|e^{Rt} \partial_z u_\phi\|_{\widetilde{L}_T^2(B^\frac{5}{2})}  
    \|e^{Rt} \partial_z u_\phi\|_{\widetilde{L}_T^\infty(B^\frac{1}{2})} , 
\end{align}
where $\check{d}_{q}$ is given by \eqref{dq check3}.

\noindent
$\bullet$ By summing the previous estimates \eqref{Es3a}, \eqref{Es3b} and \eqref{Es3c} we get 
\begin{align*}
\left(\int_0^T \|e^{Rt}\Delta^h_q(v\,\partial_{z} u)_{\phi}\|^2_{L^2}\, dt \right)^{\frac{1}{2}}
& \lesssim (2\tilde{d}_{q}+\check{d}_q) 
  2^{-\frac{3q}{2}} 
  \left(\|\partial_z u_\phi\|_{\widetilde{L}_T^\infty(B^\frac{1}{2})}   
    \|e^{Rt} \partial_z u_\phi\|_{\widetilde{L}_T^2(B^\frac{5}{2})} \right) . 
\end{align*}
Multiplying the previous inequality by $2^{\frac{3q}{2}}$ and taking the sum over $\mathbb{Z}$, 
we obtain \eqref{Es3}.
\end{proof}
\begin{proof}[Proof of Estimate $(\ref{Es4})$]
We have by Bony's decomposition in $x$ variable
\begin{multline*}
\left(\int_0^T \|e^{Rt}\Delta^h_q(v\, \partial^2_z u)_{\phi}\|^2_{L^2}\, dt \right)^{\frac{1}{2}}
 \lesssim  \left(\int_0^T \|e^{Rt}\Delta^h_q(T^h_v  \partial^2_z u)_{\phi}\|^2_{L^2} dt \right)^{\frac{1}{2}} 
 \\ 
 +  \left( \int_0^T \|e^{Rt}\Delta^h_q(T^h_{ \partial^2_z u} v)_{\phi}\|^2_{L^2} dt \right)^{\frac{1}{2}}
 + \left( \int_0^T \| e^{Rt}\Delta^h_q(R^h(v,\partial^2_z u))_{\phi}\|^2_{L^2} dt \right)^{\frac{1}{2}}.
\end{multline*}
By adapting the proof of Estimate \eqref{Es3a} we obtain
\begin{align*}
\left( \int_0^T \|e^{Rt}\Delta^h_q(T^h_v  \partial^2_z u)_{\phi}\|^2_{L^2}\, dt \right)^{\frac{1}{2}}
& \lesssim \sum_{|q'-q|\leq 4 } \left( \int_0^T \|S_{q'-1}^h v^+_\phi\|^2_{L^{\infty}}\,
\|\Delta^h_{q'} \partial^2_z u_\phi\|^2_{L^2} \,dt \right)^{\frac{1}{2}} 
 \\
& \lesssim \tilde{d}_{q} 2^{-\frac{3q}{2}}
\|\partial_z u_\phi\|_{\widetilde{L}_T^\infty(B^\frac{1}{2})} 
\|e^{Rt} \partial^2_z u_\phi\|_{\widetilde{L}_T^2(B^\frac{5}{2})} . 
\end{align*}
By adapting the proof of Estimate \eqref{Es3b} we obtain
\begin{align*}
 \left( \int_0^T \|e^{Rt}\Delta^h_q(T^h_{ \partial^2_z u} v)_{\phi}\|^2_{L^2} dt \right)^{\frac{1}{2}}
 & \lesssim \sum_{|q-q'|\leq 4} \left(\|S_{q'-1}^h e^{Rt}\partial^2_z  u^+_\phi\|^2_{L^2_z(L_x^{\infty})}\,
 \|Delta^h_{q'} v_\phi\|^2_{L_z^{\infty}(L_x^2)} \right)^{\frac{1}{2} }
 \\
& \lesssim \tilde{d}_{q}\,2^{-\frac{3q}{2}}
\|e^{Rt} \partial^2_z u_\phi\|_{\widetilde{L}_T^2(B^\frac{1}{2})}
\|e^{Rt} \partial_z u_\phi\|_{\widetilde{L}_T^\infty(B^\frac{1}{2})} . 
\end{align*}
By adapting the proof of Estimate \eqref{Es3c} we obtain
\begin{align*}
\left( \int_0^T \| e^{Rt}\Delta^h_q(R^h(v,\partial^2_z u))_{\phi}\|^2_{L^2} dt \right)^{\frac{1}{2}} 
& \lesssim  \sum_{q' \geq q-3} \left( \int_0^T
  \|\Delta^h_{q'} v_\phi\|^2_{L_z^{\infty}(L_x^2)} 
  2^{\frac{q'}{2}}\|e^{Rt}\widetilde{\Delta}^h_{q'} \partial^2_z  u_\phi\|^2_{L^2} \, dt \right)^{\frac{1}{2}}
\\
& \lesssim \check{d}_{q} 2^{-\frac{3q}{2}} 
  \|\partial_z u_\phi\|_{\widetilde{L}_T^\infty(B^\frac{5}{2})} 
  \|e^{Rt} \partial^2_z u_\phi\|_{\widetilde{L}_T^2(B^\frac{1}{2})}  . 
\end{align*}
Then by summing the previous estimates we get 
\begin{multline*}
\left(\int_0^T \|e^{Rt}\Delta^h_q(v\, \partial^2_z u)_{\phi}\|^2_{L^2}\, dt \right)^{\frac{1}{2}} 
\\ \lesssim 
 2^{-\frac{3q}{2}} 
 \left( (\tilde{d}_{q}+\check{d}_q) 
  \left(\|\partial_z u_\phi\|_{\widetilde{L}_T^\infty(B^\frac{5}{2})} 
  \|e^{Rt} \partial^2_z u_\phi\|_{\widetilde{L}_T^2(B^\frac{1}{2})} \right)
       + \tilde{d}_{q} \|\partial_z u_\phi\|_{\widetilde{L}_T^\infty(B^\frac{1}{2})} 
       \|e^{Rt} \partial^2_z u_\phi\|_{\widetilde{L}_T^2(B^\frac{5}{2})} \right). 
\end{multline*}
Multiplying  the previous inequality by $2^{\frac{3q}{2}}$ and taking the sum over $\mathbb{Z}$, 
we obtain \eqref{Es4} . 
\end{proof}
\begin{proof}[Proof of Estimate  $(\ref{Es5})$]
We have by Bony's decomposition in $x$ variable
\begin{multline*}
\| e^{Rt}\Delta^h_q(\partial_z u\,\partial_{x} \partial_z u)_{\phi}\|_{L^2} 
\\
\leq \| e^{Rt}\Delta^h_q(T^h_{\partial_z u} \partial_x \partial_z  u)_{\phi}\|_{L^2} +
\| e^{Rt}\Delta^h_q(T^h_{\partial_x \partial_z u} \partial_z u)_{\phi}\|_{L^2}+ \| e^{Rt}\Delta^h_q(R^h(\partial_z u,\partial_x \partial_z u))_{\phi}\|_{L^2}.
\end{multline*}
By adapting the proof of Estimate \eqref{Es1a} we get
\begin{align*}
\left(\int_0^T \| e^{Rt}\Delta^h_q(T^h_{\partial_z u} \partial_x \partial_z  u)_{\phi} \|^2_{L^2} dt \right)^{\frac{1}{2}}
& \lesssim \tilde{d}_{q} 2^{-\frac{3q}{2}} \|e^{Rt} \partial^2_z u_\phi\|_{\widetilde{L}_T^2(B^\frac{1}{2})}   \| \partial_z u_\phi\|_{\widetilde{L}_T^\infty(B^\frac{5}{2})}
\end{align*}
and
\begin{align*}
 \left( \int_0^T \| e^{Rt}\Delta^h_q(T^h_{\partial_x \partial_z u} \partial_z u)_{\phi}\|^2_{L^2} dt \right)^{\frac{1}{2}}
& \lesssim \tilde{d}_{q} 2^{-\frac{3q}{2}}\|e^{Rt} \partial^2_z u_\phi\|_{\widetilde{L}_T^2(B^\frac{1}{2})}   \| \partial_z u_\phi\|_{\widetilde{L}_T^\infty(B^\frac{5}{2})}. 
\end{align*}
By adapting the proof of Estimate \eqref{Es1c} we get
\begin{align*}
\left( \int_0^T \| e^{Rt}\Delta^h_q(R^h(\partial_z u,\partial_x \partial_z u))_{\phi}\|^2_{L^2} dt \right)^{\frac{1}{2}}  
 \lesssim \check{d}_{q} 2^{-\frac{3q}{2}}\|\partial_z u_\phi\|_{\widetilde{L}_T^\infty(B^\frac{1}{2})}  \|e^{Rt} \partial^2_z u_\phi\|_{\widetilde{L}_T^2(B^\frac{5}{2})}.
\end{align*}
Then by summing the previous estimates we get 
\begin{multline*}
 \left(\int_0^T \|e^{Rt}\Delta^h_q(\partial_z u\,\partial_{x} \partial_z u)_{\phi}\|^2_{L^2}\, dt \right)^{\frac{1}{2}} 
\\
 \lesssim 2^{-\frac{3q}{2}} \left(
 2\tilde{d}_q  \|e^{Rt} \partial^2_z u_\phi\|_{\widetilde{L}_T^2(B^\frac{1}{2})}   
 \| \partial_z u_\phi\|_{\widetilde{L}_T^\infty(B^\frac{5}{2})}+ \check{d}_{q} 
 \|\partial_z u_\phi\|_{\widetilde{L}_T^\infty(B^\frac{1}{2})}  
 \|e^{Rt} \partial^2_z u_\phi\|_{\widetilde{L}_T^2(B^\frac{5}{2})} \right) . 
\end{multline*}
Multiplying  the previous inequality by $2^{\frac{3q}{2}}$ and taking the sum over $\mathbb{Z}$, 
we obtain \eqref{Es5}. 
\end{proof}
\begin{proof}[Proof of Estimate $(\ref{Es6})$]
We have by Bony's decomposition in $x$ variable
\begin{align*}
\| e^{Rt}\Delta^h_q(\partial_x u\, \partial^2_z u)_{\phi}\|_{L^2} 
\leq 
  \| e^{Rt}\Delta^h_q(T^h_{\partial_x u} \partial^2_z u)_{\phi}\|_{L^2} 
+ \| e^{Rt}\Delta^h_q(T^h_{ \partial^2_z u} \partial_x u)_{\phi}\|_{L^2} 
+ \| e^{Rt}\Delta^h_q(R^h(\partial_x u,\partial^2_z u))_{\phi}\|_{L^2} .
\end{align*}
By adapting the proof of Estimate \eqref{Es1a} we get
\begin{align*}
\left( \int_0^T \| e^{Rt}\Delta^h_q(T^h_{\partial_x u}  \partial^2_z u)_{\phi}\|^2_{L^2} \,dt \right)^{\frac{1}{2}}
& \lesssim \sum_{|q'-q|\leq 4 } \left( \int_0^T 
\|S_{q'-1}^h \partial_xu^+_\phi\|^2_{L^{\infty}}\,
\|\Delta^h_{q'} \partial^2_z u_\phi\|^2_{L^2} \, dt \right)^{\frac{1}{2}} 
\\
& \lesssim \tilde{d}_{q,2}  2^{-\frac{3q}{2}}
\|\partial_z u_\phi\|_{\widetilde{L}_T^\infty(B^\frac{1}{2})}  
\|e^{Rt} \partial^2_z u_\phi\|_{\widetilde{L}_T^2(B^\frac{5}{2})} , 
\end{align*}
where $\tilde{d}_{q,2} = \sum_{|q-q'|\leq 4} d_{q'}(\partial_z^2 u_\phi,1)  2^{\frac{3}{2}(q-q')}$. 

\noindent
$\bullet$ Estimate of $\left( \int_0^T \| e^{Rt}\Delta^h_q(T^h_{\partial^2_z u} \partial_x u)_{\phi}\|_{L^2}\, dt \right)^{\frac{1}{2}}$. 
\begin{align*}
\left( \int_0^T \| e^{Rt}\Delta^h_q(T^h_{\partial^2_z u} \partial_x u)_{\phi}\|_{L^2}\, dt \right)^{\frac{1}{2}}
 & \lesssim \sum_{|q-q'|\leq 4} \left(\int_0^T \|S_{q'-1}^h e^{Rt} \partial^2_z  u^+_\phi\|^2_{L^2_z(L_x^{\infty})}\,
   \| \Delta^h_{q'} \partial_x u_\phi\|^2_{L_z^{\infty}(L_x^2)}\, dt \right)^{\frac{1}{2}}
 \\
& \lesssim \sum_{|q-q'|\leq 4} \left( \int_0^T \|e^{Rt} \partial^2_z u_\phi\|^2_{B^\frac{1}{2}} 2^{2q'} 
  \|\Delta^h_{q'} \partial_z u_\phi\|^2_{L^2}\, dt\right)^{\frac{1}{2}}
\\
& \lesssim \tilde{d}_{q} 2^{-\frac{3q}{2}} 
  \| e^{Rt} \partial^2_z u_\phi\|_{\widetilde{L}_T^2(B^\frac{1}{2})}  
  \| \partial_z u_\phi\|_{\widetilde{L}_T^\infty(B^\frac{5}{2})} . 
\end{align*}
By adapting the proof of Estimate \eqref{Es1c} we get
\begin{align*}
 \left( \int_0^T \| e^{Rt}\Delta^h_q(R^h(\partial_x u,\partial^2_z u))_{\phi}\|^2_{L^2}\, dt \right)^{\frac{1}{2}}
& \lesssim  \sum_{q' \geq q-3} \left( \int_0^T \|\Delta^h_{q'} \partial_x u_\phi\|^2_{L_z^{\infty}(L_x^2)} 
\|e^{Rt}\widetilde{\Delta}^h_{q'} \partial^2_z  u_\phi\|^2_{L^2_z (L^\infty_x)} \right)^{\frac{1}{2}} 
\\
& \lesssim \check{d}_{q} 2^{-\frac{3q}{2}} \|\partial_z u_\phi\|_{\widetilde{L}_T^\infty(B^\frac{5}{2})} 
\|e^{Rt} \partial^2_z u_\phi\|_{\widetilde{L}_T^2(B^\frac{1}{2})} . 
\end{align*}
Then by summing up the previous estimates, we get
\begin{multline*}
\left( \int_0^T \| e^{Rt}\Delta^h_q(\partial_x u\, \partial^2_z u)_{\phi}\|^2_{L^2} \right)^{\frac{1}{2}}
 \\
\lesssim 2^{-\frac{3q}{2}} \left(
 \tilde{d}_{q,2} \|\partial_z u_\phi\|_{\widetilde{L}_T^\infty(B^\frac{5}{2})} \|e^{Rt} \partial^2_z u_\phi\|_{\widetilde{L}_T^2(B^\frac{1}{2})} 
 + (\tilde{d}_{q}+\check{d}_{q}) \|\partial_z u_\phi\|_{\widetilde{L}_T^\infty(B^\frac{1}{2})}  \|e^{Rt} \partial^2_z u_\phi\|_{\widetilde{L}_T^2(B^\frac{5}{2})}\right). 
\end{multline*}
Multiplying  the previous inequality by $2^{\frac{3q}{2}}$ and taking the sum over $\mathbb{Z}$, 
we obtain \eqref{Es6}.
\end{proof}

%\newpage

\section{Appendix - Proof of the lemma \ref{lem:estim-Iq} }\label{app estim-Iq}

In this Appendix, we give a brief proof of estimates used to prove the uniqueness of the solution.

\noindent
$\bullet$ Estimate of $I_{1,q}$.
From Lemma  3.1 in \cite{PZZ2020} we obtain \eqref{I1q}.

\noindent
$\bullet$ Estimate of $I_{2,q}$.
From Lemma  3.1 in \cite{PZZ2020} we have
\begin{align*}
\abs{\int_0^t \psca{e^{Rt'}\Delta^h_q (u_1\,\partial_{x}U)_\Phi, e^{Rt'}\Delta^h_q U_\phi} dt'} 
\leq C 2^{-2qs}d^2_q\,\|e^{Rt'}U_\Phi\|^2_{\widetilde{L}^2_{T,\Theta'(t)}(B^{s+\frac{1}{2}})} . 
\end{align*}
By using integration by parts, and from Lemma  3.1 in \cite{PZZ2020} we have
\begin{align*}
& \abs {\int_0^t \psca{e^{Rt'}\Delta^h_q (u_1\,\partial_{x} \partial_z^2 U)_\Phi, e^{Rt'}\Delta^h_q U_\phi} dt'}
\\
&\quad\quad \leq  \abs {\int_0^t \psca{e^{Rt'}\Delta^h_q (\partial_z u_1\,\partial_{x} \partial_z U)_\Phi, e^{Rt'}\Delta^h_q U_\phi} dt'}+\abs {\int_0^t \psca{e^{Rt'}\Delta^h_q (u_1\,\partial_{x} \partial_z U)_\Phi, e^{Rt'}\Delta^h_q \partial_z U_\phi} dt'}
\\
&\quad\quad \leq  C 2^{-2qs}d^2_q\,\|e^{Rt'} \partial_z U_\Phi\|^2_{\widetilde{L}^2_{T,\Theta'(t)}(B^{s+\frac{1}{2}})} .
\end{align*}
By summing up the above estimates we obtain \eqref{I2q}. 

\noindent
$\bullet$ Estimate of $I_{3,q}$.
By integration by parts in $z$ variable and from Lemma  3.1 in \cite{PZZ2020}, we obtain \eqref{I3q}.

\noindent
$\bullet$ Estimate of $I_{4,q}$.
It follows from the proof of Estimate (5.14) in \cite{PZZ2020} that
\begin{align}\label{I_1}
\int_0^t \abs {\psca{e^{Rt'}\Delta^h_q(V\,\partial_z u_2)_{\Phi}, e^{Rt'} \Delta^h_q U_\Phi}} dt' 
\lesssim 2^{-2qs}d^2_q\,\|e^{Rt'} \partial_z U_\Phi\|^2_{\widetilde{L}^2_{T,\Theta'(t)}(B^{s+\frac{1}{2}})} .
\end{align}
By integration by parts
\begin{align*}
\int_0^t & \abs{\psca{e^{Rt'}\Delta^h_q(V  \partial_z^3 u_2)_{\Phi}, e^{Rt'} \,\Delta^h_q U_\Phi}}\, dt'
\\
\leq &\int_0^t \abs{\psca{e^{Rt'}\Delta^h_q(V \partial^2_z u_2)_{\Phi}, e^{Rt'} \,\Delta^h_q \partial_z U_\Phi}}\, dt'
   + \int_0^t \abs{\psca{e^{Rt'}\Delta^h_q(\partial_x U \partial^2_zu_2)_{\Phi}, e^{Rt'} \,\Delta^h_q U_\Phi}}\, dt' \\
\leq  & \int_0^t \abs{\psca{e^{Rt'}\Delta^h_q(V \partial^2_z u_2)_{\Phi}, e^{Rt'} \,\Delta^h_q \partial_z U_\Phi}}\, dt'
  + \int_0^t \abs{\psca{e^{Rt'}\Delta^h_q(\partial_zu_2 \partial_x U)_{\Phi}, e^{Rt'} \,\Delta^h_q\partial_z U_\Phi}}\, dt'\\
&+\int_0^t \abs{\psca{e^{Rt'}\Delta^h_q(\partial_zu_2 \partial_x \partial_z U)_{\Phi}, e^{Rt'} \,\Delta^h_q U_\Phi}}\, dt'.
\end{align*}
Following the prove of Lemma 3.2 in \cite{PZZ2020} we deduce for any $s \in ]0,1[$ that
\begin{align*}
\int_0^t \abs{\psca{e^{Rt'}\Delta^h_q(V \partial^2_z u_2)_{\Phi}, e^{Rt'} \,\Delta^h_q \partial_z U_\Phi}}\, dt'
\lesssim & 2^{-2qs}d^2_q \|e^{Rt'} \partial_z U_\Phi\|^2_{\widetilde{L}^2_{T,\Theta'(t)}(B^{s+\frac{1}{2}})} . 
\end{align*}
From Lemma  3.1 in \cite{PZZ2020} we deduce that
\begin{align*}
\int_0^t &\abs{\psca{e^{Rt'}\Delta^h_q(\partial_zu_2 \partial_x U)_{\Phi}, e^{Rt'} \,\Delta^h_q\partial_z U_\Phi}}\, dt'+\int_0^t \abs{\psca{e^{Rt'}\Delta^h_q(\partial_zu_2 \partial_x \partial_z U)_{\Phi}, e^{Rt'} \,\Delta^h_q U_\Phi}}\, dt'\\
&\lesssim  2^{-2qs}d^2_q \|e^{Rt'} \partial_z U_\Phi\|^2_{\widetilde{L}^2_{T,\Theta'(t)}(B^{s+\frac{1}{2}})} .
\end{align*}
Then
\begin{align} \label{I_2}
\int_0^t& \abs{\psca{e^{Rt'}\Delta^h_q(V  \partial_z^3 u_2)_{\Phi}, e^{Rt'} \,\Delta^h_q U_\Phi}}\, dt'
\lesssim 2^{-2qs}d^2_q \|e^{Rt'} \partial_z U_\Phi\|^2_{\widetilde{L}^2_{T,\Theta'(t)}(B^{s+\frac{1}{2}})} .
\end{align}
By summing the estimates \eqref{I_1} and \eqref{I_2} we obtain \eqref{I4q}.

\noindent
$\bullet$ Estimate of $I_{5,q}$.
Following the proof of Estimate (5.13) in \cite{PZZ2020}, we deduce that
\begin{align*}
\int_0^t& \abs{\psca{e^{Rt'}\Delta^h_q (v_1\partial_z U)_\Phi,e^{Rt'}\Delta^h_q U_\Phi} dt'}
\\
 &\quad\quad\quad \leq C 2^{-2qs}d^2_q\,\| u_{1_{\Phi}} \|_{L^{\infty}(B^{\frac{3}{2}})}^{\frac{1}{2}}  
\,\|e^{Rt'} \partial_z U_\Phi\|_{\widetilde{L}^2_{T}(B^{\frac{1}{2}})}\|e^{Rt'} \partial_z U_\Phi\|^2_{\widetilde{L}^2_{T,\Theta'(t)}(B^{s+\frac{1}{2}})} .
\end{align*}
By integration by parts in $z$ variable, we have
\begin{align*}
&\abs{\int_0^t \psca{e^{Rt'}\Delta^h_q (v_1\partial^3_z U)_\Phi,e^{Rt'}\Delta^h_q U_\Phi} dt'} 
\\
&\quad  \leq  \abs{\int_0^t \psca{e^{Rt'}\Delta^h_q (v_1\partial^2_z U)_\Phi,e^{Rt'}\Delta^h_q \partial_z U_\Phi} dt'}
+\abs{\int_0^t \psca{e^{Rt'}\Delta^h_q (\partial_x u_1\partial^2_z U)_\Phi,e^{Rt'}\Delta^h_q U_\Phi} dt'} 
\\
&\quad  \leq C 2^{-2qs}d^2_q\,\| u_{1_{\Phi}} \|_{L^{\infty}(B^{\frac{3}{2}})}^{\frac{1}{2}}  
\,\|e^{Rt'} \partial^2_z U_\Phi\|_{\widetilde{L}^2_{T}(B^{\frac{1}{2}})}\|e^{Rt'} \partial_z U_\Phi\|^2_{\widetilde{L}^2_{T,\Theta'(t)}(B^{s+\frac{1}{2}})} .
\end{align*}
This conclude the proof of $\eqref{I5q}$. 

\noindent
$\bullet$ Estimate of $I_{6,q}$.
Following the proof of Estimate (5.13) in \cite{PZZ2020}, we get \eqref{I6q}.

\noindent
$\bullet$ Estimate of $I_{7,q}$.
Following the proof of Estimate (5.11) in \cite{PZZ2020}, we obtain \eqref{I7q}.

\noindent
$\bullet$ Estimate of $I_{8,q}$.
Following the proof of Estimate (5.11) in \cite{PZZ2020}, we deduce that
\begin{align*}
\int_0^t& \abs{\psca{e^{Rt'}\Delta^h_q(U\partial_x u_2)_{\Phi}, e^{Rt'} \,\Delta^h_q U_\Phi}}\, dt'
\\
&\quad \lesssim 
  2^{-2qs} \, d^2_q \|e^{Rt'}  U_\Phi\|_{\widetilde{L}^2_{T,\Theta'(t)}(B^{s+\frac{1}{2}})} 
  \left(\|e^{Rt'} U_\Phi\|_{\widetilde{L}^2_{T,\Theta'(t)}(B^{s+\frac{1}{2}})} 
    + \| u_{2_{\Theta}} \|_{L^{\infty}(B^{\frac{3}{2}})}^{\frac{1}{2}} \,
     \|e^{Rt'} \partial_z U_\Phi\|_{\widetilde{L}^2_{T}(B^{s+\frac{1}{2}})} \right). 
\end{align*}
By integration by parts
\begin{align*}
\int_0^t  & \abs{\psca{e^{Rt'}\Delta^h_q(U\partial_x \partial_z^2 u_2)_{\Phi}, e^{Rt'} \,\Delta^h_q U_\Phi}}\, dt'
\\
\leq & \int_0^t \abs{\psca{e^{Rt'}\Delta^h_q(U\partial_x \partial_z u_2)_{\Phi}, e^{Rt'} \,\Delta^h_q \partial_z U_\Phi}}\, dt'+\int_0^t \abs{\psca{e^{Rt'}\Delta^h_q(\partial_z U\partial_x \partial_zu_2)_{\Phi}, e^{Rt'} \,\Delta^h_q U_\Phi}}\, dt' \\
\lesssim 
& 2^{-2qs}d^2_q \|e^{Rt'} \partial_z U_\Phi\|_{\widetilde{L}^2_{T,\Theta'(t)}(B^{s+\frac{1}{2}})} 
\\
&\times\left(\|e^{Rt'} \partial_z U_\Phi\|_{\widetilde{L}^2_{T,\Theta'(t)}(B^{s+\frac{1}{2}})} 
                  + \| \partial_zu_{2_{\Theta}} \|_{L^{\infty}(B^{\frac{3}{2}})}^{\frac{1}{2}}\,
                    \|e^{Rt'} \partial_z U_\Phi\|_{\widetilde{L}^2_{T}(B^{s+\frac{1}{2}})} \right).
\end{align*}
Then by summing up the previous estimates, we get \eqref{I8q}.

%==================== BIBLIOGRAPHY ====================

\end{document}